\DeclareFontFamily{U}{rsfs}{%
\skewchar\font127}
\DeclareFontShape{U}{rsfs}{m}{n}{%
<-6>rsfs5<6-8.5>rsfs7<8.5->rsfs10}{}
\DeclareSymbolFont{rsfs}{U}{rsfs}{m}{n}
\DeclareRobustCommand*\rsfs{%
\@fontswitch\relax\mathrsfs}
\theoremstyle{plain}
\newtheorem{thm}{Theorem}[section]
\newtheorem{prop}[thm]{Proposition}
\newtheorem{lem}[thm]{Lemma}
\newtheorem{prop-defi}[thm]{Proposition-Definition}
\newtheorem{thm-defi}[thm]{Theorem-Definition}
\newtheorem{lem-defi}[thm]{Lemma-Definition}
\newtheorem{cor-defi}[thm]{Corollary-Definition}
\newtheorem{conj}[thm]{Conjecture}
\theoremstyle{definition}
\newtheorem{defi}[thm]{Definition}
\newtheorem{rmk}[thm]{Remark}
\newtheorem{exam}[thm]{Example}
\newdimen\argwidth
\def\db[#1\db]{
 \setbox0=\hbox{$#1$}\argwidth=\wd0
 \setbox0=\hbox{$\left[\box0\right]$}
  \advance\argwidth by -\wd0
 \left[\kern.3\argwidth\box0 \kern.3\argwidth\right]}
\newcommand{\lL}{\mathcal{L}}
\newcommand{\oO}{\mathcal{O}}
\newcommand{\C}{\mathbb{C}}
\newcommand{\Q}{\mathbb{Q}}
\newcommand{\Z}{\mathbb{Z}}
\newcommand{\vir}{\mathrm{vir}}
\newcommand{\Hom}{\mathop{\rm Hom}\nolimits}
\newcommand{\dR}{\mathbf{R}}
\newcommand{\Hilb}{\mathop{\rm Hilb}\nolimits}
\newcommand{\rk}{\mathop{\rm rk}\nolimits}
\newcommand{\Ext}{\mathop{\rm Ext}\nolimits}
\newcommand{\Spec}{\mathop{\rm Spec}\nolimits}
\newcommand{\cneq}{\mathrel{\raise.095ex\hbox{:}\mkern-4.2mu=}}
\newcommand{\eqcn}{\mathrel{=\mkern-4.5mu\raise.095ex\hbox{:}}}
\newcommand{\ext}{\mathop{\rm ext}\nolimits}
\newcommand{\DT}{\mathop{\rm DT}\nolimits}
\newcommand{\tr}{\mathop{\rm tr}\nolimits}
\newcommand{\RHom}{\mathop{\dR\mathrm{Hom}}\nolimits}
\begin{document}
\title[Counting zero-dimensional subschemes in higher dimensions]
{Counting zero-dimensional subschemes \\ in higher dimensions}
\author{Yalong Cao}
\address{Mathematical Institute, University of Oxford, Andrew Wiles Building, Radcliffe Observatory Quarter, Woodstock Road, Oxford, OX2 6GG }
\email{yalong.cao@maths.ox.ac.uk}
\author{Martijn Kool}
\address{Mathematical Institute, Utrecht University, P.O. Box 80010 3508 TA Utrecht, The Netherlands}
\email{m.kool1@uu.nl}

\maketitle
\begin{abstract}
Consider zero-dimensional Donaldson-Thomas invariants of a toric threefold or toric Calabi-Yau fourfold. In the second case, invariants can be defined using a tautological insertion. In both cases, the generating series can be expressed in terms of the MacMahon function. In the first case, this follows from a theorem of Maulik-Nekrasov-Okounkov-Pandharipande. In the second case, this follows from a conjecture of the authors and a (more general $K$-theoretic) conjecture of Nekrasov.

In this paper, we consider formal analogues of these invariants in any dimension $d \not\equiv 2\ \mathrm{mod}\, 4$. The direct analogues of the above-mentioned conjectures \emph{fail} in general when $d>4$, showing that dimensions 3 and 4 are special. Surprisingly, after appropriate specialization of the equivariant parameters, the conjectures seem to hold in all dimensions. 
\end{abstract}

\section{Introduction}

Let $X$ be a smooth variety over $\C$ and denote by $\Hilb^n(X)$ the Hilbert scheme of $n$ points on $X$. When $\dim_{\C}(X)\leqslant 2$,  Hilbert schemes are smooth, in which case there is a vast literature on their geometry, topology, and representation theory.

\subsection*{Threefolds} When $\dim_{\C}(X) = 3$, the Hilbert scheme $\Hilb^n(X)$ is in general singular. Nevertheless, there exists a virtual class $[\Hilb^{n}(X)]^{\mathrm{vir}} \in H_0(\Hilb^n(X))$. When $X$ is projective, its degree is known as a Donaldson-Thomas invariant \cite{MNOP, Thomas}. 

When $X$ is toric with torus $(\C^*)^3$, we may define equivariant DT invariants \cite{MNOP} by the virtual localization formula: 
\begin{align*}
\mathrm{DT}_3(X,\,n):=\sum_{[Z] \in \Hilb^n(X)^{(\C^*)^3}} \frac{e_{(\C^*)^3}\big(\Ext^2(I_Z,I_Z)\big)}{e_{(\C^*)^3}\big(\Ext^1(I_Z,I_Z)\big)} \in \Q(\lambda_1,\lambda_2,\lambda_3),
\end{align*}
where the $(\C^*)^3$-fixed locus $\Hilb^{n}(X)^{(\C^*)^3}$ consists of finitely many reduced points, $I_Z$ is the ideal sheaf of $Z$, $e_{(\C^*)^3}(-)$ denotes the equivariant Euler class, and $\lambda_1,\lambda_2,\lambda_3$ are the equivariant parameters.

In \cite{MNOP}, Maulik-Okounkov-Nekrasov-Pandharipande proved the following formula:
$$
1+\sum_{n=1}^{\infty} \mathrm{DT}_3(X,\,n) \, q^n = M(-q)^{\int_X c^{(\C^*)^3}_3(TX \otimes K_X)},
$$
where $M(q) := \prod_{n>0} (1-q^n)^{-n}$ denotes the MacMahon function, $c^{(\C^*)^3}_3(-)$ is the equivariant third Chern class, $TX$ is the tangent bundle, and integration means equivariant push-forward to a point. 

\subsection*{Calabi-Yau fourfolds} When $X$ is a projective Calabi-Yau fourfold and assuming an orientability result \cite{CL2}, 
the Hilbert scheme $\Hilb^n(X)$ also carries a virtual class 
$
[\Hilb^n(X)]_{o(\lL)}^{\vir} \in H_{2n}(\Hilb^n(X))
$
(see \cite{BJ, CL}). This virtual class depends on a choice of orientation $o(\lL)$ on $\Hilb^n(X)$. Since the (real) virtual dimension is $2n$, we need an insertion in order to define invariants. In \cite{CK}, we take a line bundle $L$ over $X$ and insert the Euler class of the tautological bundle $L^{[n]}$, whose fibre over $Z \in \Hilb^n(X)$ is $H^0(L|_Z)$.

When $X$ is a toric Calabi-Yau fourfold, analogous invariants can be defined by localization as first proposed in \cite{CL}. Let
$T \subset (\C^*)^4$
denote the subtorus preserving the Calabi-Yau volume form. It is easy to show that we have an equality of schemes \cite{CK}
$$
\Hilb^n(X)^{T} = \Hilb^n(X)^{(\C^*)^4}.
$$
For any $T$-equivariant line bundle $L$ on $X$, we define 
\begin{align*}
\DT_4(X,L,o(\lL),\,n):=& \, \sum_{[Z] \in \Hilb^n(X)^T} (-1)^{o(\lL|_Z)} \sqrt{\frac{(-1)^{n}\cdot e_{T}\big(\Ext^{2}(I_Z,I_Z)\big)}
{e_{T}\big(\Ext^{1}(I_Z,I_Z)\big)e_{T}\big(\Ext^{3}(I_Z,I_Z)\big)}} \cdot e_T(L^{[n]}|_Z)
\\
\in& \, \frac{\mathbb{Q}(\lambda_1, \lambda_2, \lambda_3,\lambda_4)}{(\lambda_1+\lambda_2+\lambda_3+\lambda_4)},
\end{align*}
where the signs $(-1)^{o(\lL|_Z)}=\pm\,1$ are the choice of orientation $o(\lL)$. 

In \cite{CK}, the authors conjectured a formula for their generating series: there exists a choice of orientation $o(\lL)$ on each $\Hilb^n(X)$ such that the following formula holds
$$
1+\sum_{n=1}^{\infty}\DT_4(X,L,o(\lL),\,n) \, q^n = M(-q)^{\int_X c^T_3(TX) \, c^T_1(L)},
$$
where $M(q)$ still denotes the MacMahon function. 
For $X = \C^4$, this conjecture is a special case of a more general $K$-theoretic conjecture of Nekrasov \cite{Nekrasov}. However, our conjecture makes sense on (and is motivated by) an analogous conjecture on projective Calabi-Yau fourfolds. 
The above conjecture is proved when 
(i) $L=\oO_X(D)$ corresponds to a smooth toric divisor $D \subset X$, (ii) $L$ is arbitrary and $n \leqslant 6$. 

\subsection*{Higher dimensions} 
Let $X = \C^d$ with $d\geqslant3$. We consider the standard torus action of $(\C^*)^d$ on $\C^d$ and denote the Calabi-Yau subtorus by $T$. As before, we denote the equivariant parameters by $\lambda_1, \ldots, \lambda_d$. 
As in the $d=4$ case, we have
$$\Hilb^n(\C^d)^T = \Hilb^n(\C^d)^{(\C^*)^d}$$ 
as schemes (Lemma \ref{T fixed pts are iso}). Moreover, this scheme consists of finitely many reduced points corresponding to $(d-1)$-partitions\footnote{2-partitions are commonly known as plane partitions and 3-partitions are known as solid partitions. We recall the definitions in Section \ref{higher dim partition}.} $\pi = \{\pi_{i_1, \ldots, i_{d-1}}\}_{i_1, \ldots, i_{d-1} \geqslant 1}$ of size 
$$
|\pi| := \sum_{i_1, \ldots, i_{d-1} \geqslant 1} \pi_{i_1, \ldots, i_{d-1}} =  n.
$$

For $d \not \equiv 2\ \mathrm{mod}\, 4$, we consider the following formal analogues of the previous invariants
\begin{align*}
Z_n(\C^d) :=& \, \sum_{[Z] \in \Hilb^n(X)^{(\C^*)^d}} \frac{e_{(\C^*)^d}(\Ext^{\mathrm{even}}(I_Z,I_Z)_0)}{e_{(\C^*)^d}(\Ext^{\mathrm{odd}}(I_Z,I_Z)_0)}, \quad \mathrm{for \ } d \ \mathrm{odd} \\
Z_n(\C^d,L,o(\lL)) :=& \, \sum_{[Z] \in \Hilb^n(X)^T} (-1)^{o(\lL|_Z)} \sqrt{(-1)^{n}\frac{e_{T}\big(\Ext^{\mathrm{even}}(I_Z,I_Z)_0\big)}{e_{T}\big(\Ext^{\mathrm{odd}}(I_Z,I_Z)_0\big)}} \cdot e_T(L^{[n]}|_Z), \,d \equiv 0\ \mathrm{mod} \, 4.
\end{align*}
In the second case, $L$ is a $T$-equivariant line bundle on $X$. Furthermore, $(-1)^{o(\lL|_Z)}$ corresponds to a choice of sign for the square root, which we refer to as a choice of orientation. Moreover, $\Ext^{\mathrm{even}}(I_Z,I_Z)_0$ is short-hand for the sum of the trace-free extension groups $\Ext^{2i}(I_Z,I_Z)_0$ in the equivariant $K$-theory $K_T(\bullet)$ of one point and similarly for $\Ext^{\mathrm{odd}}(I_Z,I_Z)_0$. 
For this to be well-defined, we conjecture that (Conjecture ~\ref{key conj}) 
$$
\Ext^{\mathrm{even}}(I_Z,I_Z)_0 - \Ext^{\mathrm{odd}}(I_Z,I_Z)_0,
$$
has no negative $T$-fixed term, which we verify for all odd dimensions (Proposition \ref{verify key conj}, following \cite{MNOP}) and for $d \equiv 0\ \mathrm{mod}\, 4$ in all cases that we calculate (Theorem \ref{checks}). 

Denote the generating function of $d$-partitions by
$$
M_d(q) :=\sum_{{\scriptsize{\begin{array}{c} d\textrm{-partitions} \, \pi \end{array}}}} q^{|\pi|}. 
$$
Famously, MacMahon showed $M_2(q) = M(q)$ (given above), but no closed product formula is known for $M_{\geqslant 3}(q)$. 

Our first observation is that there is no straight-forward generalization of the conjectures in dimensions 3 and 4 to higher dimensions:
\begin{prop} [Remarks \ref{remfail1}, \ref{remfail2}]
Let $d=5$ or $7$. Then for any $E \in \mathbb{Q}(\lambda_1, \ldots, \lambda_d)$, we have
\begin{align*}
1+\sum_{n=1}^{\infty} Z_n(\C^d) \, q^n \not= M_{d-1}(-q)^E. 
\end{align*}
Let $d=8$ or $12$. Then there are $(\mathbb{C}^*)^d$-equivariant line bundles $L$ on $\C^d$ such that for any choice of orientation $o(\lL)$ and any $E \in \mathbb{Q}(\lambda_1, \ldots, \lambda_d) / (\lambda_1+ \cdots + \lambda_d)$, we have
\begin{align*}
1+\sum_{n=1}^{\infty} Z_n(\C^d,L,o(\lL)) \, q^n \not= M_{d-2}(-q)^E. 
\end{align*}
\end{prop}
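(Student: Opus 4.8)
The plan is to reduce each non-equality to comparing the first two or three coefficients in $q$, using the explicit $T$-equivariant vertex character at the fixed points. For a $d$-dimensional Young diagram $\pi$, let $Q_\pi=\sum_{b\in\pi}t^{b}$ be the character of $\oO_{Z_\pi}$ and $\overline{Q}_\pi=Q_\pi(t_1^{-1},\dots,t_d^{-1})$. Computing $I_{Z_\pi}=[\oO_{\C^d}]-[\oO_{Z_\pi}]$ in $K_{(\C^*)^d}(\bullet)$, a standard manipulation (as in \cite{MNOP} for $d=3$ and \cite{CK} for $d=4$) yields
\begin{align*}
\Ext^{\mathrm{even}}(I_{Z_\pi},I_{Z_\pi})_0-\Ext^{\mathrm{odd}}(I_{Z_\pi},I_{Z_\pi})_0=-Q_\pi+(-1)^{d-1}\frac{\overline{Q}_\pi}{t_1\cdots t_d}+(-1)^{d}\,Q_\pi\overline{Q}_\pi\,\frac{\prod_{i=1}^{d}(1-t_i)}{t_1\cdots t_d},
\end{align*}
so that $Z_n(\C^d)$ equals the sum over Young diagrams $\pi$ of size $n$ of $e_{(\C^*)^d}$ applied to this $K$-theory class, where $e_{(\C^*)^d}\big(\sum_w n_w[w]\big):=\prod_w\big(\sum_i w_i\lambda_i\big)^{n_w}$; this makes sense by Proposition~\ref{verify key conj}. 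In the Calabi--Yau range $d\equiv0\ \mathrm{mod}\ 4$ one restricts to $T$, where $t_1\cdots t_d=1$ and the class above is self-conjugate, splits off a ``square root'' $V_\pi$ of its non-fixed part together with a sign $(-1)^{o(\lL|_{Z_\pi})}$, and multiplies by $e_T(L^{[n]}|_{Z_\pi})=\prod_{b\in\pi}\big(\mathrm{wt}(b)+\mathrm{wt}(L)\big)$; the instances of Conjecture~\ref{key conj} needed below are among the cases verified in Theorem~\ref{checks}.

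For $d=5,7$ there is exactly one $d$-dimensional Young diagram of size $1$ and there are $d$ of size $2$, so $M_{d-1}(-q)=1-q+dq^{2}+O(q^{3})$ and $M_{d-1}(-q)^{E}=1-Eq+\big(\tfrac12E^{2}+(d-\tfrac12)E\big)q^{2}+O(q^{3})$. Matching the coefficient of $q$ forces $E=-Z_1(\C^d)$. From the one-box term, where $Q_\pi=\overline{Q}_\pi=1$, the displayed class collapses to $\sum_{j=1}^{d-1}(-1)^{j}e_j(t^{-1})$ (the constant terms cancel because $d$ is odd), so $Z_1(\C^d)=\prod_{j=1}^{d-1}\sigma_j^{(-1)^{j}}$ with $\sigma_j:=\prod_{|S|=j}\big(\sum_{i\in S}\lambda_i\big)$, a nonzero element of $\Q(\lambda_1,\dots,\lambda_d)$. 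Since $E$ is now pinned down, the claim reduces to the single inequality of rational functions
\begin{align*}
Z_2(\C^d)\neq\tfrac12\,Z_1(\C^d)^{2}-\big(d-\tfrac12\big)Z_1(\C^d),
\end{align*}
where $Z_2(\C^d)$ is the sum of the $d$ length-$2$ monomial ideal contributions, each evaluated from the vertex formula above. I would establish this inequality by specializing $\lambda_1,\dots,\lambda_d$ to generic integers, avoiding the finitely many hyperplanes on which either side has a pole, and comparing the two resulting rational numbers; in the unlikely event that the $q^{2}$ coefficients agree, one passes to the $q^{3}$ coefficient, which brings in only the $O(1)$ Young diagrams of size $3$.

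For $d=8$ or $12$, take $L=\oO_{\C^d}$ with $(\C^*)^d$-weight $\lambda_1$, so that $e_T(L^{[1]}|_{Z})=\lambda_1\neq0$ and hence $Z_1\neq0$. Here $M_{d-2}(-q)=1-q+(d-1)q^{2}+O(q^{3})$ and the coefficient of $q$ in $M_{d-2}(-q)^{E}$ forces $E=-Z_1(\C^d,L,o(\lL))$, into which only the single orientation sign $\varepsilon_0$ at $n=1$ enters; write $Z_1=\varepsilon_0 v_0$ with $v_0$ a fixed rational function. At order $q^{2}$ there are $d$ fixed points $\pi_1,\dots,\pi_d$, the length-$2$ monomial ideals, and $Z_2(\C^d,L,o(\lL))=\sum_{i=1}^{d}\varepsilon_i v_i$ for signs $\varepsilon_i=\pm1$ and fixed $v_i\in\Q(\lambda_1,\dots,\lambda_d)$ coming from the relevant ``square root'' and from $e_T(L^{[2]}|_{Z_{\pi_i}})$. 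The claim then amounts to checking that
\begin{align*}
\sum_{i=1}^{d}\varepsilon_i v_i\neq\tfrac12 v_0^{2}-\varepsilon_0\big(d-\tfrac32\big)v_0
\end{align*}
for every $(\varepsilon_0,\varepsilon_1,\dots,\varepsilon_d)\in\{\pm1\}^{d+1}$; equivalently, the two rational functions $\tfrac12 v_0^{2}\mp(d-\tfrac32)v_0$ must lie outside the finite set $\{\sum_i\varepsilon_i v_i\}$, which I would verify by specializing the $\lambda_i$ to generic integers on the hyperplane $\lambda_1+\cdots+\lambda_d=0$.

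The one genuine obstacle is the size of the symbolic computation, concentrated in $d=12$: already the one-box class involves products over all subsets of $\{1,\dots,12\}$ (for instance $\binom{12}{6}=924$ linear factors), so a fully symbolic treatment is impractical. The workable route is to substitute generic integer values for $\lambda_1,\dots,\lambda_d$ (respecting $\sum_i\lambda_i=0$ in the Calabi--Yau case) at the very start and to work with exact rational arithmetic, after checking that the chosen values lie off all the relevant poles. A secondary, routine point is to use exactly the box-by-box sign convention of \cite{CK} when taking the ``square root'' in $Z_n(\C^d,L,o(\lL))$ and to invoke the cases of Conjecture~\ref{key conj} supplied by Theorem~\ref{checks}; once that convention is fixed, ruling out all $2^{d+1}$ orientations is immediate, and if the discrepancy should only surface at $q^{3}$ rather than $q^{2}$ the argument is unchanged and merely the finite list of fixed points to be summed grows.
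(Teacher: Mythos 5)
Your setup is essentially the paper's own: Remarks \ref{remfail1} and \ref{remfail2} establish the statement by exactly this route, namely the vertex formalism of Section \ref{sect vertex formu} (your displayed $K$-theory class is precisely $-\mathsf{V}_\pi$ from Lemma \ref{Vlem}), matching the exponent $E$ from the $q^1$-coefficient and then exhibiting a mismatch at order $q^2$ (failure mod $q^3$ for $d=5,7$; at order $q^2$ for $d=8,12$), with all orientation sign choices ruled out in the $4k$ case; your expansions of $M_{d-1}(-q)^E$ and $M_{d-2}(-q)^E$ and the resulting reduction to a finite check are correct. The one substantive caveat is that the statement is a non-identity, so the proof \emph{is} the explicit evaluation: you defer that finite computation (generic integer specialization with exact arithmetic) rather than carry it out, whereas the paper's proof consists precisely of having performed it (in Maple). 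Relatedly, for $d=8,12$ you commit to the particular bundle $L=\oO_{\C^d}\otimes t_1$; the proposition only asserts the \emph{existence} of some $L$ exhibiting failure, and nothing in your argument guarantees your chosen $L$ is such a witness, so you must be prepared to vary $L$ if the low-order coefficients happen to match for it. Modulo actually running that check, your argument coincides with the paper's.
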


\begin{rmk}
This result could be viewed as an indication that dimensions 3 and 4 are special. 
Perhaps it is also related to Nekrasov's comment in his paper ``Magnificent four'' \cite{Nekrasov}: \\

\noindent \textit{``The adjective `Magnificent' reflects this author's conviction that the dimension four is the maximal dimension where the natural albeit complex-valued probability distribution exists.''}
\end{rmk}

Notwithstanding, it seems that part of the formulae in dimensions 3 and 4 survives in higher dimensions:
\begin{thm}[Theorem \ref{thm for toric 2k-1}]\label{thm for toric 2k-1 intro}
For any $d \geqslant 3$ odd, we have
\begin{equation}
1+\sum_{n=1}^{\infty} Z_n(\C^d) \, q^n  \Big|_{\lambda_1 + \cdots +\lambda_d = 0} = M_{d-1}(-q).  
\nonumber \end{equation}
\end{thm}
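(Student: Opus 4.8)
By Lemma~\ref{T fixed pts are iso} the sum defining $Z_n(\C^d)$ runs over the finitely many reduced points $Z_\pi\in\Hilb^n(\C^d)^{(\C^*)^d}$ indexed by $(d-1)$-partitions $\pi$ of size $n$. Let $t_1,\dots,t_d$ be the standard characters of $(\C^*)^d$, write $t^w:=t_1^{w_1}\cdots t_d^{w_d}$ for $w\in\Z^d$, set $\sigma:=(1,\dots,1)$ and $t^\sigma:=t_1\cdots t_d$, let $\overline{(\,\cdot\,)}$ denote $t_i\mapsto t_i^{-1}$, and put
\[
\chi_0(\pi):=\Ext^{\mathrm{even}}(I_{Z_\pi},I_{Z_\pi})_0-\Ext^{\mathrm{odd}}(I_{Z_\pi},I_{Z_\pi})_0=\sum_w a_w\,t^w\ \in\ K_{(\C^*)^d}(\mathrm{pt}).
\]
Then the contribution of $Z_\pi$ to $Z_n(\C^d)$ is $\prod_w(w\cdot\lambda)^{a_w}$, and the whole theorem follows from two claims: \textbf{(a)} $a_w=0$ for every $w$ proportional to $\sigma$ (so that this product and its restriction to $\{\lambda_1+\cdots+\lambda_d=0\}$ are defined), and \textbf{(b)} on $\{\lambda_1+\cdots+\lambda_d=0\}$ that product equals $(-1)^{|\pi|}$. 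Granting (a), (b), and summing over all $(d-1)$-partitions gives $1+\sum_n Z_n(\C^d)\,q^n\big|_{\lambda_1+\cdots+\lambda_d=0}=\sum_\pi(-1)^{|\pi|}q^{|\pi|}=\sum_\pi(-q)^{|\pi|}=M_{d-1}(-q)$.

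\textbf{Character formula, self-duality, and claim (a).} Following \cite{MNOP}: since $\C^d$ is affine, the Koszul resolution gives $\chi(F,G)=\overline{\chi(F)}\,\chi(G)\prod_{i}(1-t_i^{-1})$ in equivariant $K$-theory, and applying this to $\chi(I_{Z_\pi})=\prod_i(1-t_i)^{-1}-Q$, where $Q=Q_\pi:=\sum_{b\in\pi}t^b$ is the character of $\oO_{Z_\pi}$ (a genuine polynomial, the boxes $b$ having nonnegative coordinates), a short manipulation using $\prod_i\frac{1-t_i^{-1}}{1-t_i}=(-1)^d\,t^{-\sigma}$ yields, for $d$ odd,
\[
\chi_0(\pi)=-\,Q+t^{-\sigma}\overline{Q}+Q\,\overline{Q}\prod_{i=1}^d(1-t_i^{-1}),
\]
a Laurent polynomial. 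From this (equivalently, from Serre duality on $\C^d$, whose canonical bundle is trivial up to the equivariant weight $t^\sigma$) one reads off the self-duality $\overline{\chi_0(\pi)}=(-1)^d\,t^\sigma\,\chi_0(\pi)=-\,t^\sigma\,\chi_0(\pi)$; equivalently the involution $\iota(w):=-w-\sigma$ of $\Z^d$ satisfies $a_{\iota(w)}=-a_w$, and it has no fixed weight because $d$ odd forces $\sigma\notin 2\Z^d$. Claim (a) now follows: if $w=k\sigma$ then $a_{k\sigma}$ and $a_{-(k+1)\sigma}=-a_{k\sigma}$ are both $T$-fixed terms of $\chi_0(\pi)$, hence both $\geq 0$ by Proposition~\ref{verify key conj}; so $a_{k\sigma}=0$ for all $k$.

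\textbf{Claim (b).} Choose $\ell\in\Q^d$ with all coordinates positive and generic, so that $\ell(v)\neq-\tfrac12\ell(\sigma)$ for every $v$ of the form $b-b'-e_S$ with $b,b'\in\pi$ and $S\subseteq\{1,\dots,d\}$ (finitely many nonzero linear conditions, as $v+\tfrac12\sigma$ has half-integer entries); here $e_S:=\sum_{i\in S}e_i$. On $\{\lambda_1+\cdots+\lambda_d=0\}$ one has $\iota(w)\cdot\lambda=-(w\cdot\lambda)$, and splitting $\prod_w(w\cdot\lambda)^{a_w}$ according to the sign of $\ell(w)+\tfrac12\ell(\sigma)$ and pairing $w$ with $\iota(w)$ gives
\[
\prod_w(w\cdot\lambda)^{a_w}\Big|_{\lambda_1+\cdots+\lambda_d=0}=(-1)^{\,\mathsf{A}|_{t=1}},\qquad \mathsf{A}:=\sum_{\ell(w)>-\frac12\ell(\sigma)}a_w\,t^w.
\]
It remains to compute $\mathsf{A}|_{t=1}\bmod 2$ from the formula for $\chi_0(\pi)$: every box $b$ has $\ell(b)\geq 0>-\tfrac12\ell(\sigma)$, so $-Q$ contributes $-|\pi|$; every weight $-b-\sigma$ has $\ell(-b-\sigma)\leq-\ell(\sigma)<-\tfrac12\ell(\sigma)$, so $t^{-\sigma}\overline{Q}$ contributes $0$; and for $Q\overline{Q}\prod_i(1-t_i^{-1})=\sum_{b,b',S}(-1)^{|S|}t^{\,b-b'-e_S}$, the involution $(b,b',S)\mapsto(b',b,S^{c})$ — which has no fixed point since $|S|\neq|S^{c}|$ for $d$ odd — sends the weight $b-b'-e_S$ to its $\iota$-image, so by the genericity of $\ell$ exactly one member of each pair satisfies $\ell>-\tfrac12\ell(\sigma)$; this term therefore contributes to $\mathsf{A}|_{t=1}$ a sum of exactly $|\pi|^2\,2^{d-1}$ signs $\pm 1$, an even integer since $d\geq 3$. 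Hence $\mathsf{A}|_{t=1}\equiv-|\pi|\equiv|\pi|\pmod 2$, proving (b).

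\textbf{Main obstacle.} The substantive part is (b): before the Calabi--Yau specialization the contribution of $Z_\pi$ is a genuine rational function, and its collapse to $(-1)^{|\pi|}$ rests entirely on the \emph{anti}-self-duality $\overline{\chi_0(\pi)}=-t^\sigma\chi_0(\pi)$, which is available only because $d$ is odd — for $d\equiv 0\bmod 4$ one gets a symmetry instead, the reason square roots and orientations enter there. The only genuinely combinatorial step is the parity count for $\mathsf{A}|_{t=1}$; one must be mildly careful that cancellations among the monomials $t^{\,b-b'-e_S}$ do not disturb this count, which is why $\ell$ is chosen generic with respect to those monomials and not merely with respect to the weights of $\chi_0(\pi)$. (This runs parallel to the vertex computation of \cite{MNOP}; the role of the substitution $\lambda_1+\cdots+\lambda_d=0$ here is exactly that played by $s_1+s_2+s_3=0$ there.)
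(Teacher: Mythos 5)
Your overall skeleton (Serre-duality anti-symmetry of the character plus a parity count over a chosen ``half'' of the weights) is the right one, but there is a genuine gap at your claim (a), and it sits exactly where the real work of the paper's proof lies. Proposition~\ref{verify key conj} is a statement about the class $\sum_i(-1)^i\Ext^i(I_Z,I_Z)_0$ in $K_T(\bullet)$: it says the coefficient of the \emph{trivial $T$-character}, i.e.\ the sum $\sum_k a_{k\sigma}$ over all full-torus weights proportional to $\sigma=(1,\dots,1)$, is not negative (for $d$ odd it is $0$, by precisely the pairing $a_{-w-\sigma}=-a_w$ that you also use). It says nothing about the individual coefficients $a_{k\sigma}$, so your deduction ``$a_{k\sigma}\geq 0$ and $a_{-(k+1)\sigma}=-a_{k\sigma}\geq 0$, hence $a_{k\sigma}=0$'' is a non sequitur. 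This is not cosmetic: pairs of weights proportional to $\sigma$ restrict to the trivial $T$-character and cancel inside the $K_T$-class, contributing \emph{no} sign to the ratio of $T$-equivariant Euler classes, whereas your formula $(-1)^{\mathsf{A}|_{t=1}}$ counts them; so your claim (b) is only correct if either each $a_{k\sigma}=0$ or at least $\sum_{k\geq 0}a_{k\sigma}$ is even. This ``diagonal'' issue is exactly what the paper's proof in Section~\ref{pf of odd conj} has to confront: there $\mathsf{V}_\pi$ is split as $\mathsf{V}_\pi^{+}+\mathsf{V}_\pi^{-}$ with $\overline{\mathsf{V}}{}_\pi^{+}=-\mathsf{V}_\pi^{-}$, the authors explicitly note that $\mathsf{V}_\pi^{+}$ \emph{can} have $T$-fixed terms even though $\mathsf{V}_\pi$ does not, and the entire second half of their proof is an induction on extremal boxes showing that this $T$-fixed part is even. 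Your claim (a) is an even stronger assertion (vanishing of every diagonal coefficient of the full-torus character); it holds in small examples and may well be true, but it requires an argument of comparable substance, and none is given.

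Apart from this, your route is a legitimate variant of the paper's: the explicit character $\chi_0(\pi)=-Q+t^{-\sigma}\overline{Q}+Q\overline{Q}\prod_i(1-t_i^{-1})$, the involution $w\mapsto -w-\sigma$, and the generic half-space $\ell(w)>-\tfrac12\ell(\sigma)$ replace the paper's hands-on splitting $\mathsf{V}_\pi^{\pm}$, and your parity count for the monomials $t^{\,b-b'-e_S}$ via the fixed-point-free involution $(b,b',S)\mapsto(b',b,S^{c})$ (an even total of $|\pi|^2 2^{d-1}$ signs) is a clean substitute for part of the MNOP-style bookkeeping. To complete the proof you must supply the missing diagonal statement — either prove that all coefficients $a_{k\sigma}$ vanish, or prove the weaker parity claim that $\sum_{k\geq 0}a_{k\sigma}$ is even — for instance by an induction on boxes of the kind carried out in Section~\ref{pf of odd conj}; as it stands, the hard combinatorial core has been assumed rather than proved.
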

In fact, the proof of Theorem \ref{thm for toric 2k-1 intro} easily follows by arguments analogous to \cite{MNOP} (see Section \ref{pf of odd conj}). \\

As for dimensions multiple of four, we conjecture the following.
\begin{conj} \label{conj for toric 4k intro}
Let $d \geqslant 4$ such that $d \equiv 0 \ \mathrm{mod}\, 4$ and let $\ell \in \Z$. Denote by $L = \oO_{\mathbb{C}^d} \otimes t_d^{-\ell}$ the trivial line bundle with character $t_d^{-\ell}$. Then for any choice of orientation $o(\lL)$ and any $[Z] \in \Hilb^n(\mathbb{C}^d)^T$ corresponding to a $(d-1)$-partition $\pi$, we have
\begin{equation} \label{spec}
(-1)^{o(\lL)|_Z} \sqrt{(-1)^{n}\frac{e_{T}\big(\Ext^{\mathrm{even}}(I_Z,I_Z)_0\big)}{e_{T}\big(\Ext^{\mathrm{odd}}(I_Z,I_Z)_0\big)}} \cdot e_T(L^{[n]}|_Z) \Big|_{\lambda_1+ \cdots + \lambda_{d-1} = \lambda_d = 0} = (-1)^{|\pi|} \omega_\pi \prod_{i=1}^{\pi_{1\ldots1}} (\ell-(i-1)),
\end{equation}
for some $\omega_\pi \in \Q$. Moreover, there exists a choice of orientation $o(\lL)$ on each $\Hilb^n({\mathbb{C}^d})$ such that
\begin{equation}\label{mainformula4k}
1+\sum_{n=1}^{\infty} Z_n(\C^d,L,o(\lL)) \, q^n  \Big|_{\lambda_1+ \cdots + \lambda_{d-1}  = \lambda_{d} = 0} = M_{d-2}(-q)^{\ell}.  
\end{equation}
\end{conj}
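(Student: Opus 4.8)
The plan is to follow the strategy of \cite{MNOP}, and of its four-dimensional refinement in \cite{CK}, splitting the argument into a \emph{local} part which establishes the box-by-box identity \eqref{spec}, and a \emph{global} part which sums these contributions into \eqref{mainformula4k}. The basic tool is the explicit $T$-character of the virtual object at a monomial fixed point: using the Koszul resolution of $\oO_Z$ over $\C[x_1,\ldots,x_d]$ one writes, as in \cite{MNOP},
\[
\chi(\pi):=\Ext^{\mathrm{even}}(I_Z,I_Z)_0-\Ext^{\mathrm{odd}}(I_Z,I_Z)_0\ \in\ K_T(\mathrm{pt})
\]
as a Laurent polynomial of the form (a sum over the boxes of $\pi$) $\pm$ (a term quadratic in $P_\pi=\sum_{\square\in\pi}\mathbf{t}^{\square}$ carrying the Koszul factor $\prod_{a=1}^{d}(1-t_a)$). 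Granting Conjecture~\ref{key conj} --- legitimate here by Theorem~\ref{checks} --- this class has no negative $T$-fixed summand, so the equivariant Euler-class ratio is a well-defined rational function and, by the self-duality of $\chi(\pi)$ on the Calabi--Yau torus (exactly as for $d=4$), admits a square root up to the sign recorded by the orientation.

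For the local identity \eqref{spec}, the key observation is that the specialization $\lambda_d=0$ (i.e. $t_d=1$) makes the Koszul factor vanish, $\prod_{a=1}^{d}(1-t_a)\big|_{t_d=1}=0$, so the quadratic term of $\chi(\pi)$ becomes numerically inert: as a Laurent polynomial it disappears, and at the level of the Euler class its effect reduces to a product of constants $\tfrac{w_d}{w_d+c}$ over the pairs of weights $(\mathbf{t}^{w},\mathbf{t}^{w+c\,e_d})$ that cancel when $t_d=1$. What then survives is the linear, sum-over-boxes part; imposing in addition $\lambda_1+\cdots+\lambda_{d-1}=0$ one checks that the only box-weights degenerating as $\lambda_d\to 0$ are those of the boxes on the $t_d$-axis, $(1,\ldots,1,k)$ with $k=1,\ldots,\pi_{1\ldots1}$, producing a pole of controlled order in $\sqrt{(-1)^{n}\,e_T(\Ext^{\mathrm{even}}_0)/e_T(\Ext^{\mathrm{odd}}_0)}$. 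On the other hand $e_T(L^{[n]}|_Z)=\prod_{\square\in\pi}\big(\text{weight of }t_d^{-\ell}\mathbf{t}^{\square}\big)$, and the same $t_d$-axis boxes contribute precisely the factors $-(\ell-(k-1))\lambda_d$. These cancel the pole, leaving $\prod_{i=1}^{\pi_{1\ldots1}}(\ell-(i-1))$ times a ratio built only from the remaining, $\ell$-independent box-weights --- a rational function homogeneous of degree zero in $\lambda_1,\ldots,\lambda_{d-1}$ modulo $\lambda_1+\cdots+\lambda_{d-1}=0$.

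I expect the main obstacle to be precisely the content of \eqref{spec}: that this degree-zero rational function is an actual \emph{constant} $\omega_\pi\in\Q$. For $d=3,4$ this is automatic, the residual torus being one-dimensional; for $d>4$ it is a genuine rigidity statement, and the constancy must survive the correction coming from the cancelling pairs in the quadratic term, which is the delicate point. I would attack it in two ways: either by an explicit pairing of the off-axis box-weights into ``ribbons'', as in \cite{MNOP}, along which the contributions telescope to $\pm1$; or --- which I find more promising --- by a dimensional-reduction argument, showing that the $\lambda_d\to 0$ degeneration identifies $\chi(\pi)$ with the $\C^{d-1}$-vertices of the $t_d$-slices of $\pi$ (plus explicit $\lambda_d$-factors), thereby reducing \eqref{spec} to the analogous rigidity in dimension $d-1$, which is odd and is governed by the arguments behind Theorem~\ref{thm for toric 2k-1 intro}.

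Granting \eqref{spec}, the global statement \eqref{mainformula4k} becomes a combinatorial identity: one must choose, compatibly over all $n$, the orientations $o(\lL)$ so that
\[
\sum_{\pi}\,(-1)^{o(\lL)|_Z}(-1)^{|\pi|}\,\omega_\pi\prod_{i=1}^{\pi_{1\ldots1}}(\ell-(i-1))\,q^{|\pi|}\;=\;M_{d-2}(-q)^{\ell}.
\]
Here I would use the slicing of a $(d-1)$-partition $\pi$ along the $t_d$-direction into a nested chain $\rho^{(1)}\supseteq\cdots\supseteq\rho^{(\pi_{1\ldots1})}$ of $(d-2)$-partitions with $|\pi|=\sum_k|\rho^{(k)}|$, together with the fact that $\prod_{i=1}^{h}(\ell-(i-1))$ counts injections $[h]\hookrightarrow[\ell]$; combined with the explicit value of $\omega_\pi$ (which the dimensional-reduction approach above should also supply, matching the weights in Theorem~\ref{thm for toric 2k-1 intro}, where I expect $\omega_\pi=\pm1$ on ``flat'' slices and telescoping otherwise), these should convert the sum over nested chains of length $\le\ell$ into the generating function of $\ell$-tuples of $(d-2)$-partitions, i.e. $M_{d-2}(q)^{\ell}$, the sign $(-1)^{|\pi|}$ accounting for $q\mapsto-q$. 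For $d=4$ this would in particular establish a specialization of the (still open, for general $\ell$) conjecture of \cite{CK}.
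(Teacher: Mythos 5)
First, note what you are attempting: the statement is Conjecture \ref{conj for toric 4k intro}, which the paper does \emph{not} prove. Its status there is: verified modulo $q^2$ (Example \ref{verification of one pt for 4k}), proved for $\ell=1$ by reducing to the smooth divisor $D=\{x_d=0\}$ and the odd-dimensional Theorem \ref{thm for toric 2k-1} (Proposition \ref{thm for equiv sm div}, using Proposition \ref{compare ext} and Lemma \ref{vanishing}), and checked by computer for $d=8$ modulo $q^7$ and $d=12$ modulo $q^5$. Measured against that, your proposal is a plan rather than a proof, and the gap sits exactly where the paper's open content sits. You yourself flag that the constancy of the degree-zero rational function in \eqref{spec} is ``the main obstacle'' and only sketch two possible attacks; neither is carried out, and both have problems. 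Your parenthetical that constancy is ``automatic'' for $d=4$ because the residual torus is one-dimensional is false: after $\lambda_1+\cdots+\lambda_{d-1}=\lambda_d=0$ the residual torus for $d=4$ is two-dimensional, and precisely this constancy is a conjecture in \cite{CK}, verified only case by case. Your dimensional-reduction route also does not extend beyond $\ell=1$: for $\ell\geqslant 2$ the divisor $D=\{x_d^{\ell}=0\}$ is non-reduced, so there is no Hilbert scheme of a smooth $(d-1)$-fold to reduce to, and Proposition \ref{compare ext} (which needs $D$ smooth and the adjunction $\oO_D(D)\cong K_D$) is unavailable. Finally, the heuristic that the quadratic (Koszul) part of $\mathsf{V}_\pi$ becomes ``inert'' at $t_d=1$ is too quick: weights may specialize to zero on the locus $\lambda_1+\cdots+\lambda_{d-1}=\lambda_d=0$, the residual factors of the form $w_d/(w_d+c)$ are genuinely nontrivial, and Remark \ref{remfail2} shows these corrections do change the answer away from the specialization, so controlling them is the whole point, not a formality.

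The global step has the same character. Granting \eqref{spec}, passing to \eqref{mainformula4k} requires the explicit values of $\omega_\pi$; your slicing of a $(d-1)$-partition into nested $(d-2)$-partitions and the injection-counting interpretation of $\prod_{i=1}^{h}(\ell-(i-1))$ reproduce the paper's combinatorial identity (Proposition \ref{comb proof}), but that identity is proved there for the \emph{combinatorially defined} weight $\omega^c_\pi$, and the identification $\omega_\pi=\omega^c_\pi$ is itself Conjecture \ref{compare wpi}, open except in low-order checks. You also need the positivity/sign structure (Conjectures \ref{sign unique} and \ref{specconj}) to pin down the orientations uniformly in $n$. So your proposal, if completed, would amount to proving exactly the chain of conjectures the paper leaves open; as written it establishes only what the paper already establishes (the $\ell=1$ case and the combinatorial identity for $\omega_\pi^c$), and the two decisive steps --- the rigidity in \eqref{spec} and the evaluation $\omega_\pi=\omega_\pi^c$ --- are missing.
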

The above choice of orientation seems to be unique. 
\begin{conj} \label{sign unique intro}
The choices of orientation for which \eqref{mainformula4k} holds are unique. Specifically, they are the choices of orientation for which $\omega_\pi > 0$ in equation \eqref{spec}.
\end{conj}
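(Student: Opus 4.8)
\emph{Proof strategy.} The plan is to assume Conjecture~\ref{conj for toric 4k intro} and to reduce the statement to the single positivity assertion
\begin{equation*}
(\star)\qquad \omega_\pi>0\ \text{ for every }(d-1)\text{-partition }\pi,
\end{equation*}
where $\omega_\pi$ is attached to a valid orientation $o(\lL)$ by \eqref{spec}; it then remains to prove $(\star)$, which I approach in two steps of increasing difficulty.

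\emph{Reduction to $(\star)$.} By \eqref{spec}, for a valid orientation the $\pi$-term of $Z_{|\pi|}(\C^d,L,o(\lL))$ specializes to $(-1)^{|\pi|}\omega_\pi\,(\ell)_{k(\pi)}$, where $k(\pi):=\pi_{1\ldots1}$ and $(\ell)_k:=\prod_{i=1}^{k}(\ell-i+1)$ is the falling factorial. Since $\omega_\pi\in\Q$ is independent of $\ell$, both sides of \eqref{mainformula4k} have $q^n$-coefficient a polynomial in $\ell$ of degree $\le n$, so — as Conjecture~\ref{conj for toric 4k intro} asserts \eqref{mainformula4k} for all $\ell\in\Z$ — it is equivalent to the polynomial identity $\sum_{\pi\neq\emptyset}\omega_\pi\,(\ell)_{k(\pi)}\,(-q)^{|\pi|}=M_{d-2}(-q)^{\ell}-1$ in $\Q[\ell][[q]]$. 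As $(\ell)_{k(\pi)}$ depends only on $k(\pi)$ and the falling factorials are linearly independent, this identity is equivalent to fixing the block sums $\sigma_{n,k}:=\sum_{|\pi|=n,\,k(\pi)=k}\omega_\pi$. Since $|\omega_\pi|$ is independent of the orientation and flipping the orientation at one fixed point negates the corresponding $\omega_\pi$, property $(\star)$ for the valid orientation implies that flipping it at any nonempty set of fixed points strictly decreases some block sum, so no other orientation satisfies \eqref{mainformula4k}; together with existence from Conjecture~\ref{conj for toric 4k intro} this gives uniqueness, and an orientation has all $\omega_\pi>0$ if and only if it coincides with the valid one. So the statement follows from $(\star)$.

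\emph{Block positivity.} Writing $M_{d-2}(q)^{\ell}=\sum_n b_n(\ell)q^n$, comparison of coefficients of $(-q)^n$ gives $\sum_k\sigma_{n,k}(\ell)_k=b_n(\ell)$, hence $k!\,\sigma_{n,k}=(\Delta^k b_n)(0)$. Now $b_n(j)$ counts $j$-tuples of $(d-2)$-partitions of total size $n$; inclusion–exclusion over the empty slots gives $(\Delta^k b_n)(0)=g_n(k)$, the number of $k$-tuples of \emph{nonempty} $(d-2)$-partitions of total size $n$. Therefore $\sigma_{n,k}=g_n(k)/k!>0$ for all $1\le k\le n$ (such a tuple exists: take $k-1$ single boxes and one $(d-2)$-partition of $n-k+1$). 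In particular $(\star)$ already holds for every block $\{|\pi|=n,\ k(\pi)=k\}$ containing a single $\pi$.

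\emph{Individual positivity; the main obstacle.} To distribute the block positivity over the individual $\pi$ I would analyze $\omega_\pi$ directly. By the local computation of \cite{CK}, the factor $(\ell)_{k(\pi)}$ is precisely the contribution of the boxes of $\pi$ on the $d$-th coordinate axis to $e_T(L^{[n]}|_Z)$, so $\omega_\pi$ equals — up to the global sign choice — the value, after specializing $\lambda_d=0$ and then $\lambda_1+\cdots+\lambda_{d-1}=0$, of the regularized equivariant square root $\lambda_d^{-k(\pi)}\sqrt{(-1)^{|\pi|}\,e_T(\Ext^{\mathrm{even}}(I_Z,I_Z)_0)/e_T(\Ext^{\mathrm{odd}}(I_Z,I_Z)_0)}$. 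Using the standard vertex formula for the $T$-character of $\Ext^{\bullet}(I_Z,I_Z)_0$ at a monomial ideal (as in \cite{MNOP,Nekrasov}) one can write this as an explicit product of ratios of linear forms in $\lambda_1,\ldots,\lambda_{d-1}$, and one would then show it is a positive rational on the line $\lambda_1+\cdots+\lambda_{d-1}=0$; a cleaner variant would be an induction on $n$ in which one removes a corner box $b$, sets $\pi'=\pi\setminus\{b\}$, expresses $\omega_\pi/\omega_{\pi'}$ through the local weights at $b$, and shows this ratio is positive. This last step is the main obstacle: after the doubly degenerate specialization the radicand is a signed product of linear forms restricted to a hyperplane, with no formal reason to be a nonnegative rational, and the $\omega_\pi$ are genuinely fractional (already for $d=4$ the vertical domino has $\omega_\pi=\tfrac12$), so one needs a quantitative argument rather than sign-bookkeeping. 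Absent such a structural identity, $(\star)$, and hence the statement, is supported by the explicit verifications of Theorem~\ref{checks}.
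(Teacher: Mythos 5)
Your proposal does not (and cannot, as written) prove the statement, and it is worth being clear about why: the statement is a conjecture in the paper. The authors offer no proof, only finite computational verifications (Propositions \ref{verify sign unique} and \ref{weight wpi check}, via the vertex formalism of Section \ref{sect vertex formu} and a Maple implementation), plus a remark whose search-space reduction uses exactly the monotonicity idea you use. Within that framing, your reduction of uniqueness to the strict positivity $(\star)$ is sound, granting Conjecture \ref{conj for toric 4k intro} and reading \eqref{mainformula4k} as an identity for all $\ell$ (which is how the paper treats it; for a single fixed $\ell$, e.g.\ $\ell=0$ or $\ell=1$, uniqueness is false since partitions with $\pi_{1\cdots1}\geqslant 2$ contribute $0$). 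Your block-sum computation $\sigma_{n,k}=g_n(k)/k!>0$ is also correct, but it buys nothing new: it is precisely the aggregate identity of Theorem \ref{4k-1 partition counting} (equivalently the purely combinatorial Proposition \ref{comb proof} together with Conjecture \ref{compare wpi}), which already follows from the conjectures and constrains only sums of the $\omega_\pi$ within a block, not their individual signs.

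The genuine gap is the step you flag yourself: individual positivity $\omega_\pi>0$. This is not a technical remainder of your argument; it \emph{is} the content of the statement (the paper isolates it as Conjecture \ref{specconj}, $\omega_\pi\in\Q_{>0}$), so your proof essentially reduces the conjecture to itself plus existence. Moreover the quantity you would need to control is delicate: after the double specialization $\lambda_1+\cdots+\lambda_{d-1}=\lambda_d=0$, the square root $\mathsf{w}_\pi$ is a signed product of ratios of linear forms restricted to a hyperplane, and neither its well-definedness (for $d\geqslant 8$ this already rests on Conjecture \ref{key conj}, and the further specialization in \eqref{spec} is itself conjectural) nor its nonvanishing, let alone a uniform sign, has any formal reason to hold; the fractional values $\omega_\pi=\tfrac12,\ \tfrac{729}{2},\ \tfrac{81}{2}$ in Remark \ref{indivpart} rule out any parity or sign-bookkeeping argument, as you note. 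Without $(\star)$, cancellations inside a block $\{|\pi|=n,\ \pi_{1\cdots1}=k\}$ could allow a second valid orientation, so uniqueness is not established. In sum, your proposal proves only that the statement would follow from $(\star)$, and that $(\star)$ holds in aggregate over blocks; beyond that it stands on the same finite evidence as the paper (Theorem \ref{checks}), and the key positivity remains open.
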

\begin{thm}[Theorem \ref{main thm on 4k dim}, Proposition \ref{more verify of conj 4k}, \ref{verify sign unique}, \ref{weight wpi check}] \label{checks}
The conjectures hold in the following cases:
\begin{itemize}  
\item Conjecture \ref{conj for toric 4k intro} is true modulo $q^2$. 
\item Conjecture \ref{conj for toric 4k intro} is true for $\ell = 1$. 
\item Conjectures \ref{conj for toric 4k intro} and \ref{sign unique intro} are true in dimension 4 modulo $q^7$ \cite{CK}.  
\item Conjectures \ref{conj for toric 4k intro} and \ref{sign unique intro} are true in dimension 8 modulo $q^7$.  
\item Conjectures \ref{conj for toric 4k intro} and \ref{sign unique intro}  are true in dimension 12 modulo $q^5$. 
\item Equation \eqref{spec} is true for a certain list 3-partitions of size 7--15 \cite[App.~A]{CK}. Equation \eqref{spec} is true for the $7$-partitions of size $9,10,14$ of Remark \ref{indivpart}.
\end{itemize}
\end{thm}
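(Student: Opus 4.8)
The plan is to treat the six assertions in turn, all of them resting on one explicit input: a combinatorial description of the $T$-fixed point contributions. Following \cite{MNOP, CK}, for a $(d-1)$-partition $\pi$ corresponding to $[Z]\in\Hilb^n(\C^d)^T$ one writes down the vertex character
$$
\mathsf{V}_\pi \ := \ \sum_i (-1)^i \big[\Ext^i(I_Z,I_Z)_0\big] \ = \ \big[\Ext^{\mathrm{even}}(I_Z,I_Z)_0\big]-\big[\Ext^{\mathrm{odd}}(I_Z,I_Z)_0\big] \ \in \ K_T(\bullet)
$$
from a Koszul-type resolution of $I_Z$; it is a Laurent polynomial given by the standard formula in $Q_\pi$, $\overline{Q}_\pi$ and $t_1,\dots,t_d$, where $Q_\pi=\sum_{\square\in\pi}t^\square$ is the character of $\oO_Z$ and the overline inverts the $t_i$. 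Since $K_{\C^d}$ is $T$-equivariantly trivial, this character is, on the Calabi--Yau subtorus, (anti-)self-dual in the way that makes $(-1)^n\,e_T(\Ext^{\mathrm{even}}_0)/e_T(\Ext^{\mathrm{odd}}_0)$ the square of a rational function in $\lambda_1,\dots,\lambda_{d-1}$ (with $\lambda_d=-\sum_{i<d}\lambda_i$) up to sign; a choice of square root is a choice of orientation $(-1)^{o(\lL|_Z)}$, and that $\mathsf{V}_\pi$ has no ``negative'' $T$-fixed monomial is Conjecture~\ref{key conj}. Together with $e_T(L^{[n]}|_Z)=\prod_w(w-\ell\lambda_d)$ over the weights $w$ of $\oO_Z$, this makes the left side of \eqref{spec} an explicit rational function attached to $\pi$ and turns \eqref{mainformula4k} into the identity that the sum over $|\pi|=n$ of these functions, restricted to $\lambda_1+\cdots+\lambda_{d-1}=0$, equals the $q^n$-coefficient of $M_{d-2}(-q)^{\ell}$.

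\textbf{The cases ``modulo $q^2$'' and ``$\ell=1$''.} For the first, only the empty partition and the unique $\pi$ of size $1$ occur; I would substitute $Q_\pi=1$, specialize, take the square root, multiply by $e_T(L^{[1]}|_Z)$, and compare with $M_{d-2}(-q)^{\ell}=1-\ell q+O(q^2)$, the same computation exhibiting $\omega_\pi=1$. For $\ell=1$ we have $L=\oO_{\C^d}\otimes t_d^{-1}=\oO_{\C^d}(D)$ with $D=\{x_d=0\}\cong\C^{d-1}$, and the tautological section of $L^{[n]}$ induced by the defining section of $\oO(D)$ has zero locus $\Hilb^n(D)\subset\Hilb^n(\C^d)$. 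Adapting the smooth-divisor argument of \cite{CK}, I would use this section to show that the fixed points with $\pi_{1\cdots1}\geq 2$ (those with $Z\not\subset D$) contribute $0$ after specialization --- precisely the vanishing of $\prod_{i=1}^{\pi_{1\cdots1}}(1-(i-1))$ --- and that a fixed point with $\pi_{1\cdots1}=1$ reduces, by dimensional reduction to $D$, to the corresponding fixed point of $\Hilb^n(\C^{d-1})$ (the $(d-1)$-partitions with all entries $\leq 1$ being in size-preserving bijection with $(d-2)$-partitions). As $d-1\equiv 3\bmod 4$, the per-fixed-point form of the argument proving Theorem~\ref{thm for toric 2k-1 intro} evaluates the latter to $(-1)^{|\pi|}$; hence \eqref{spec} holds with $\omega_\pi=1$ and, summing, \eqref{mainformula4k} becomes $M_{d-2}(-q)$.

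\textbf{Dimensions $4,8,12$ and the individual partitions.} Dimension $4$ modulo $q^7$ is \cite{CK}. For $d=8$ (respectively $d=12$) I would enumerate all $7$-partitions of size $\leq 6$ (respectively $11$-partitions of size $\leq 4$), compute $\mathsf{V}_\pi$, specialize $\lambda_1+\cdots+\lambda_{d-1}=0$, extract the square root, multiply by $e_T(L^{[n]}|_Z)$, and verify both that the outcome is a \emph{constant} of the shape $(-1)^{|\pi|}\omega_\pi\prod_{i=1}^{\pi_{1\cdots1}}(\ell-(i-1))$ with $\omega_\pi\in\Q$ --- which in particular checks Conjecture~\ref{key conj} in these cases --- and that the sum over $|\pi|=n$ equals the $q^n$-coefficient of $M_{d-2}(-q)^{\ell}$ as an identity of polynomials in $\ell$. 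Choosing each square root so that $\omega_\pi>0$ gives \eqref{mainformula4k}; its uniqueness (Conjecture~\ref{sign unique intro}) follows because flipping any nonempty set $S$ of orientations changes the $q^n$-coefficient by $-2\sum_{\pi\in S,\,|\pi|=n}(-1)^{|\pi|}\omega_\pi\prod_{i=1}^{\pi_{1\cdots1}}(\ell-(i-1))$, which is nonzero for the least such $n$ because the $\omega_\pi$ are positive and the falling factorials $\ell^{\underline{m}}$ are linearly independent. The last bullet is the same computation run on the explicit finite lists: the $3$-partitions of sizes $7$--$15$ of \cite[App.~A]{CK} and the $7$-partitions of sizes $9,10,14$ of Remark~\ref{indivpart}, where only \eqref{spec} is at stake.

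\textbf{Main obstacle.} The delicate point throughout is the square root: one must know that $(-1)^{|\pi|}\,e_T(\Ext^{\mathrm{even}}_0)/e_T(\Ext^{\mathrm{odd}}_0)$ really is a square of a rational function --- equivalently that $\mathsf{V}_\pi$ has the correct symmetry on $T$ and no negative $T$-fixed monomial --- and then to pin down its sign consistently across all $\pi$ simultaneously, so that the cancellation of the poles of the square-root factor against the zeros of $e_T(L^{[n]})$ along $\lambda_1+\cdots+\lambda_{d-1}=0$ produces a constant and the sum telescopes to $M_{d-2}(-q)^{\ell}$; it is exactly the existence and uniqueness of such a coherent sign that is being verified. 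A secondary, purely practical obstacle is combinatorial explosion: in dimensions $8$ and $12$ both the number of $(d-1)$-partitions and the number of equivariant variables grow quickly, so these checks --- and the large individual partitions --- require careful symbolic computation.
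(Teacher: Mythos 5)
Your proposal is correct and follows essentially the same route as the paper: the vertex formalism of Section \ref{sect vertex formu} plus the explicit one-box computation for the $q^1$ term, the vanishing lemma and reduction along the smooth divisor $D=\{x_d=0\}$ to the odd-dimensional Theorem \ref{thm for toric 2k-1} for $\ell=1$, and Maple computations of $\mathsf{V}_\pi$ for the dimension $8$, $12$ and individual-partition checks. The only cosmetic difference is in the uniqueness bookkeeping: you argue via linear independence of the falling factorials $\prod_{i=1}^{m}(\ell-(i-1))$ with positive $\omega_\pi$, while the paper inductively compares coefficients of powers of $\ell$ against $M_{d-2}(-q)^\ell$ and exploits the permutation symmetry in $x_1,\ldots,x_{d-1}$ to make the search over orientations feasible; both rest on the same computed data.
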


\begin{rmk}
Donaldson-Thomas theory has deep relations with shifted symplectic geometry introduced by T.~Pantev, B.~T\"{o}en, M.~Vaqui\'{e}, and G.~Vezzosi \cite{PTVV}. While the former is only known to exist in dimensions 3 and 4, the latter exists in all dimensions. 
It would be interesting to understand whether the counting invariants studied in this paper 
are related to PTVV's program of quantization of Calabi-Yau moduli spaces. 
\end{rmk}
Assuming Conjectures \ref{conj for toric 4k intro} and \ref{sign unique intro}, we have the following application to counting \textit{weighted} $(d-1)$-partitions.
\begin{thm}[Theorem \ref{4k-1 partition counting}]
Let $d \geqslant 4$ such that $d \equiv 0\ \mathrm{mod}\, 4$. Conjecture \ref{conj for toric 4k intro} implies the following formula \begin{equation} \label{weightedcount}
\sum_{{\scriptsize{\begin{array}{c} (d-1)\textrm{-}\mathrm{partitions} \, \pi \end{array}}}} \omega_\pi \, t^{\pi_{1 \ldots 1}} \, q^{|\pi|} = e^{t(M_{d-2}(q)-1)},
\end{equation}
where $t$ is a formal variable. In particular, by setting $t = 1$, we obtain
\begin{equation}
\sum_{(d-1)\textrm{-}\mathrm{partitions} \, \pi} \omega_\pi \, q^{|\pi|} = e^{M_{d-2}(q)-1}. \nonumber 
\end{equation}
\end{thm}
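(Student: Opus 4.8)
The plan is to fold the two assertions of Conjecture~\ref{conj for toric 4k intro} into a single generating-series identity parametrized by $\ell$, reindex it as a sum over $(d-1)$-partitions, and then read off the weighted partition count by expanding both sides in the falling-factorial basis of $\Q[\ell]$. Throughout, assume Conjecture~\ref{conj for toric 4k intro} and fix a choice of orientation $o(\lL)$ on each $\Hilb^n(\C^d)$ for which \eqref{mainformula4k} holds; note that Conjecture~\ref{conj for toric 4k intro} asserts in particular that for this orientation the $\ell$-dependence of each $T$-fixed contribution factors out as the polynomial $\prod_{i=1}^{\pi_{1\ldots1}}(\ell-(i-1))$ with an $\ell$-independent coefficient $\omega_\pi \in \Q$.

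First I would assemble the identity. Take $L=\oO_{\C^d}\otimes t_d^{-\ell}$ as in Conjecture~\ref{conj for toric 4k intro}. Since $Z_n(\C^d,L,o(\lL))$ is by definition the sum over $[Z]\in\Hilb^n(\C^d)^T$ of the left-hand side of \eqref{spec}, and these fixed points are indexed by $(d-1)$-partitions $\pi$ with $|\pi|=n$, combining \eqref{spec} with \eqref{mainformula4k} gives, for every $\ell\in\Z$,
\begin{equation*}
1+\sum_{n=1}^{\infty}\Bigg(\sum_{|\pi|=n}(-1)^{|\pi|}\,\omega_\pi\prod_{i=1}^{\pi_{1\ldots1}}\big(\ell-(i-1)\big)\Bigg)q^n = M_{d-2}(-q)^{\ell}.
\end{equation*}
Replacing $q$ by $-q$ and using that $(-1)^{|\pi|}(-q)^{|\pi|}=q^{|\pi|}$ kills every sign; absorbing the constant term as the empty-partition term (so $\omega_\emptyset:=1$, consistently with the $n=0$ contribution) this becomes
\begin{equation*}
\sum_{(d-1)\textrm{-partitions }\pi}\omega_\pi\,(\ell)_{\pi_{1\ldots1}}\,q^{|\pi|}=M_{d-2}(q)^{\ell},\qquad (\ell)_k:=\ell(\ell-1)\cdots(\ell-k+1),\ (\ell)_0:=1.
\end{equation*}

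Next I would expand the right-hand side in the basis $\{(\ell)_k\}_{k\geq0}$ of $\Q[\ell]$. Since $M_{d-2}(q)-1\in q\,\Q[[q]]$, the binomial series gives $M_{d-2}(q)^{\ell}=\big(1+(M_{d-2}(q)-1)\big)^{\ell}=\sum_{k\geq0}\frac{(\ell)_k}{k!}\,(M_{d-2}(q)-1)^k$, which converges $q$-adically. On the left, group by $k=\pi_{1\ldots1}$ and set $g_k(q):=\sum_{\pi:\,\pi_{1\ldots1}=k}\omega_\pi\,q^{|\pi|}$; because $\pi_{1\ldots1}=k$ forces $|\pi|\geq k$ we have $g_k(q)\in q^k\,\Q[[q]]$, so for each power of $q$ only finitely many $k$ contribute on either side. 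Both sides are thus $\Q[[q]]$-linear combinations of the polynomials $(\ell)_k$, and since the identity holds for all $\ell\in\Z$ it holds as an identity of polynomials in $\ell$, coefficientwise in $q$. Comparing the linearly independent $(\ell)_k$ yields $g_k(q)=\frac{1}{k!}(M_{d-2}(q)-1)^k$ for all $k$, whence
\begin{equation*}
\sum_{(d-1)\textrm{-partitions }\pi}\omega_\pi\,t^{\pi_{1\ldots1}}\,q^{|\pi|}=\sum_{k\geq0}t^k g_k(q)=\sum_{k\geq0}\frac{\big(t(M_{d-2}(q)-1)\big)^k}{k!}=e^{t(M_{d-2}(q)-1)},
\end{equation*}
and setting $t=1$ gives the final formula.

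The one genuinely delicate point is this passage to the falling-factorial basis: the assignment $t^k\mapsto(\ell)_k$ is not multiplicative, so one cannot simply substitute $t=\ell$, and the whole argument rests on recognizing $\{(\ell)_k\}_{k\geq0}$ as a basis of $\Q[\ell]$ and matching coefficients there --- which is exactly what the expansion of $M_{d-2}(q)^{\ell}$ in powers of $M_{d-2}(q)-1$ produces on the other side. The remaining points are routine bookkeeping: the $q$-adic convergence that makes the term-by-term basis comparison legitimate, the harmless identification of the constant term with the empty partition, and the observation that \eqref{spec} determines $\omega_\pi$ uniquely as a function of $\pi$ alone (divide by the nonzero polynomial $\prod_{i=1}^{\pi_{1\ldots1}}(\ell-(i-1))$ in $\ell$).
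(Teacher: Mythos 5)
Your proof is correct and is essentially the argument the paper relies on (its own proof is a one-line reference to \cite[Thm.~2.19]{CK} for $d=4$, which is exactly this calculation): combine the pointwise specialization \eqref{spec} with the series formula \eqref{mainformula4k}, substitute $q \mapsto -q$, and identify the coefficients of the falling factorials $(\ell)_k$ after expanding $M_{d-2}(q)^{\ell}=\sum_{k\geqslant 0}\binom{\ell}{k}\big(M_{d-2}(q)-1\big)^k$. Your explicit step of fixing one orientation (hence one set of weights $\omega_\pi$) valid for all $\ell$ is the intended reading of the conjecture and matches what the paper justifies in the body via the sign-uniqueness and positivity conjectures.
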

In Definition \ref{combinatoric wpi}, we define a (purely combinatorial) weight $\omega_\pi^c$, which is conjecturally equal to $\omega_\pi$ (Conjecture \ref{compare wpi}). We check the equality $\omega_\pi^c = \omega_\pi$ in several cases (Proposition \ref{prop compare wpi}). Replacing $\omega_\pi$ by $\omega_\pi^c$, we prove \eqref{weightedcount} in Proposition \ref{comb proof} (in which case the requirement $d \equiv 0 \ \mathrm{mod} \, 4$ can be dropped).  

\subsection*{Acknowledgements} 
We are very grateful to the referee for helpful comments.
Y.~C. is supported by The Royal Society Newton International Fellowship. This material is based upon work supported by the National Science Foundation under Grant No.~DMS-1440140 while M. K. was in residence at the Mathematical Sciences Research Institute in Berkeley, California, during the Spring 2018 Semester.

\section{Hilbert schemes and tautological bundles}

\subsection{Definitions}

Let $X$ be a smooth variety. We denote by $\Hilb^{n}(X)$ the Hilbert scheme of $n$ points on $X$. This is a fine moduli space (i.e.~with universal family), whose closed points correspond to zero-dimensional subschemes of length $n$ in $X$. Moreover, when $b_1(X) = 0$, $\Hilb^{n}(X)$ is isomorphic to the moduli scheme of Gieseker stable sheaves on $X$ with Chern character $(1,0,\ldots,0,-n)\in H^{\mathrm{even}}(X)$ (basically, by mapping $Z$ to its defining ideal $I_Z \subset \oO_X$).

Given a line bundle $L$ on $X$, we define its tautological bundle $L^{[n]}$ as follows \cite[Sect.~4.1]{Lehn}
\begin{equation}L^{[n]}:=\pi_{M*} \big(\mathcal{O}_{\mathcal{Z}_n}\otimes \pi_X^{*}L\big),   \nonumber \end{equation}
where $\mathcal{Z}_n\subset \Hilb^{n}(X)\times X$ denotes the universal subscheme and 
$\pi_M, \pi_X$ are projections from the product $\Hilb^{n}(X)\times X$ to each factor. Since $\pi_M$ is a flat finite morphism of degree $n$, $L^{[n]}$ is locally free of rank $n$ on $\Hilb^{n}(X)$. Note that for any $[Z] \in\Hilb^n(X)$, we have 
\begin{align*}\chi(I_Z,I_Z)_0&=\chi(I_Z,I_Z)-\chi(\oO_X)=-2n, \quad \textrm{ }\textrm{if}\textrm{ }\dim_{\mathbb{C}}(X)\textrm{ }\textrm{is}\textrm{ }\textrm{even}, \nonumber  \\\chi(I_Z,I_Z)_0&=\chi(I_Z,I_Z)-\chi(\oO_X)=0, \quad \textrm{ }\textrm{if}\textrm{ }\dim_{\mathbb{C}}(X)\textrm{ }\textrm{is}\textrm{ }\textrm{odd}. \nonumber \end{align*}
 
When $L$ corresponds to an effective divisor $D$, the vector bundle $L^{[n]}$ has a tautological section, induced by the defining section $s_D : \oO \rightarrow \oO(D)$,
whose zero locus is the Hilbert scheme $\Hilb^n(D)$ of $n$ points on $D$ (e.g.~see \cite[Prop.~2.4]{CK} for a proof). 
\begin{prop} \label{section s}
Let $D \subset X$ be an effective divisor on a smooth variety $X$ and let $L:=\oO(D)$. Then the vector bundle $L^{[n]}$ on $\Hilb^n(X)$ has a tautological section whose (scheme theoretic) zero locus is isomorphic to $\Hilb^n(D)$.
\end{prop}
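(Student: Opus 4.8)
The plan is to build the section explicitly out of $s_D$ by pull-back and push-forward along the universal family, and then to identify its zero scheme with $\Hilb^n(D)$ by comparing functors of points.

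\emph{Constructing the section.} Write $\pi\colon\mathcal{Z}_n\to\Hilb^n(X)$ for the restriction of $\pi_M$ to the universal subscheme; it is finite flat of degree $n$, so $\pi_*$ is exact and commutes with arbitrary base change, and $L^{[n]}=\pi_*\big(\oO_{\mathcal{Z}_n}\otimes\pi_X^*L\big)$ is locally free of rank $n$. Tensoring $\pi_X^*s_D\colon\oO_{\Hilb^n(X)\times X}\to\pi_X^*L$ with $\oO_{\mathcal{Z}_n}$ gives a map $\oO_{\mathcal{Z}_n}\to\oO_{\mathcal{Z}_n}\otimes\pi_X^*L$; applying $\pi_*$ and precomposing with the canonical map $\oO_{\Hilb^n(X)}\to\pi_*\oO_{\mathcal{Z}_n}$ produces a section $s\colon\oO_{\Hilb^n(X)}\to L^{[n]}$. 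Chasing the $(\pi^*,\pi_*)$-adjunction shows that under the identification $H^0\big(\Hilb^n(X),L^{[n]}\big)=H^0\big(\mathcal{Z}_n,\pi_X^*L|_{\mathcal{Z}_n}\big)$ the section $s$ is simply $s_D$ restricted to $\mathcal{Z}_n$.

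\emph{Comparing functors.} Let $T$ be a scheme and $f\colon T\to\Hilb^n(X)$, classifying a flat family $\mathcal{Z}\subseteq T\times X$ of length-$n$ subschemes; let $\pi_T,p_X$ be the two projections of $T\times X$. Since $\pi$ is finite flat, forming $L^{[n]}$ and $s$ commutes with the base change $f$: we obtain $f^*L^{[n]}\cong\pi_{T*}\big(\oO_{\mathcal{Z}}\otimes p_X^*L\big)$ and, under $H^0(T,f^*L^{[n]})=H^0(\mathcal{Z},p_X^*L|_{\mathcal{Z}})$, the pulled-back section $f^*s$ is identified with $p_X^*s_D|_{\mathcal{Z}}$. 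Hence $f$ factors through the zero scheme $Z(s)\subseteq\Hilb^n(X)$ (the closed subscheme cut out by $\im(s^\vee\colon (L^{[n]})^\vee\to\oO_{\Hilb^n(X)})$) if and only if $p_X^*s_D|_{\mathcal{Z}}=0$. Now $p_X^*s_D$ is the tautological section of $p_X^*\oO_X(D)=\oO_{T\times X}(T\times D)$, cutting out the effective Cartier divisor $T\times D$; its restriction to $\mathcal{Z}$ therefore vanishes exactly when $\mathcal{Z}\cap(T\times D)=\mathcal{Z}$ scheme-theoretically, i.e.\ when $\mathcal{Z}\subseteq T\times D$. Such families are precisely the $T$-points of the Hilbert functor of $D$, which is represented by $\Hilb^n(D)$. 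Thus $Z(s)$ and $\Hilb^n(D)$ represent the same subfunctor of the functor of points of $\Hilb^n(X)$, so by Yoneda they are canonically isomorphic over $\Hilb^n(X)$; in particular $Z(s)\cong\Hilb^n(D)$, which is the claim.

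\emph{Main obstacle.} Everything here is formal except the base-change compatibility used in the second step: one must check that the formation of $L^{[n]}$ and of $s$ is stable under pull-back along an arbitrary $f\colon T\to\Hilb^n(X)$ and that the adjunction identifications are natural. This is exactly where the finite flatness of $\pi$ enters (via cohomology-and-base-change, or, more elementarily, exactness of $\pi_*$ together with its compatibility with base change for affine morphisms). Granting this, the remaining ingredients — the universal property of the zero scheme of a section of a vector bundle, the fact that the tautological section of $\oO_X(D)$ restricts to zero on $\mathcal{Z}$ iff $\mathcal{Z}$ is contained in the divisor, and Yoneda's lemma — are standard.
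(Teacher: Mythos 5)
Your construction of the tautological section (push forward $s_D|_{\mathcal{Z}_n}$ along the finite flat projection) and the functor-of-points identification of its zero scheme with $\Hilb^n(D)$ are correct, and the base-change compatibility you single out is indeed the only nontrivial input, justified by affineness and flatness of $\pi$. This is essentially the same argument as the proof the paper defers to in \cite[Prop.~2.4]{CK}, so nothing further is needed.
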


\subsection{Heuristics}
Let $X$ be a smooth projective variety of dimension $d:= \dim_{\mathbb{C}}(X)\geqslant 2$. In analogy with dimensions 3 and 4, we would like to treat $-\chi(I_Z,I_Z)_0 = \chi(\oO_X) - \chi(I_Z,I_Z)$ as the (real/complex) virtual dimension of $\Hilb^n(X)$. Namely, 
we imagine there exist
virtual classes
\begin{align*}
[\Hilb^n(X)]^{\mathrm{vir}} &\in H_{-2\chi(I_Z,I_Z)_0}(\Hilb^n(X)) =  H_{0}(\Hilb^n(X)), \textrm{ }\textrm{if}\textrm{ }
d \ \mathrm{odd}, \\
[\Hilb^n(X)]^{\mathrm{vir}} &\in H_{-\chi(I_Z,I_Z)_0}(\Hilb^n(X)) = H_{2n}(\Hilb^n(X)), \textrm{ }\textrm{if}\textrm{ }
d \equiv 0 \ \mathrm{mod}\, 4, \\
[\Hilb^n(X)]^{\mathrm{vir}} &\in H_{-2\chi(I_Z,I_Z)_0}(\Hilb^n(X)) = H_{4n}(\Hilb^n(X)), \textrm{ }\textrm{if}\textrm{ }
d \equiv 2\ \mathrm{mod}\, 4. 
\end{align*}

In the first case, we define invariants by simply taking the degree\,\footnote{When $d=3$, the required virtual class exists by \cite{MNOP, Thomas}.}
\begin{equation}\int_{[\Hilb^n(X)]^{\mathrm{vir}}}1.  \nonumber \end{equation}
In the second case, we integrate against $e(L^{[n]})$, where $L=\oO_X(D)$ corresponds to a smooth effective divisor. And we expect
\begin{equation}\int_{[\Hilb^n(X)]^{\mathrm{vir}}}e(L^{[n]})=\int_{[\Hilb^n(D)]^{\mathrm{vir}}}1. \nonumber \end{equation}
By Proposition \ref{section s}, this equality would be justified if $\Hilb^n(X)$ and $\Hilb^n(D)$ were smooth \emph{and} of dimension equal to the virtual dimension (which, of course, never happens).\,\footnote{When $d=4$ and $X$ is Calabi-Yau, the virtual class exists (see \cite{BJ, CL}). It depends on a choice of orientation.}

In the third case, one can use many possible insertions, e.g.~Chern classes of tautological bundles and virtual tangent bundles, etc. We do not consider them in this paper.

Of course, when $d>4$, we do not know how to construct virtual classes on $\Hilb^n(X)$, or even whether they exist at all.
Nevertheless, on a toric variety, such as $X=\mathbb{C}^d$, we can define an equivariant version of $[\Hilb^n(X)]^{\mathrm{vir}}$ in analogy with the virtual localization formulae for $d=3,4$.

\subsection{Deformation and obstruction spaces}
In this section, we gather some facts about the deformation-obstruction spaces of $\Hilb^n(X)$ for later use.

The following proposition closely follows  \cite[Lemma 2.6]{CK}. We include its proof for completeness.
\begin{prop}\label{ideal and str sheaf}
Let $X$ be a smooth quasi-projective toric variety\,\footnote{A toric variety is defined by a fan $\Delta$ in a lattice $N$ \cite{Ful}. We always assume $\Delta$ contains cones of dimension $\rk(N)$.} with torus $(\mathbb{C}^*)^d$ and satisfying $H^{i>0}(\oO_X)=0$. Then for any $[Z]\in\Hilb^n(X)^{(\mathbb{C}^*)^d}$ and $0<i< d:=\dim_{\mathbb{C}}X$, we have an isomorphism of $(\mathbb{C}^*)^d$-representations
\begin{equation}\Ext^{i}(I_{Z},I_{Z}) \cong \Ext^{i}(\mathcal{O}_Z,\mathcal{O}_Z). \nonumber \end{equation} 
Moreover $\Ext^d(I_Z,I_Z) = 0$.
\end{prop}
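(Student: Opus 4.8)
The plan is to work on the smooth quasi-projective toric variety $X$ and exploit the short exact sequence $0 \to I_Z \to \oO_X \to \oO_Z \to 0$ to compare $\Ext^\bullet(I_Z,I_Z)$ with $\Ext^\bullet(\oO_Z,\oO_Z)$. Applying $\Hom(-,I_Z)$ and $\Hom(-,\oO_Z)$ to this sequence produces long exact sequences relating $\Ext^i(I_Z,I_Z)$, $\Ext^i(\oO_X,I_Z) = H^i(I_Z)$, $\Ext^i(\oO_Z,I_Z)$, and similarly with $\oO_Z$ in the second slot. The key inputs will be: (i) since $Z$ is zero-dimensional, $\oO_Z$ is supported in dimension $0$, so $\Ext^i(\oO_Z,\oO_X) = 0$ for $i<d$ and $\Ext^i(\oO_Z, \oO_Z) = 0$ for $i > d$ (local duality / the fact that $\oO_Z$ has projective dimension $d$ at each of its points); (ii) $H^{i>0}(\oO_X) = 0$ by hypothesis, and $H^i(I_Z) = H^i(\oO_X)$ for $i \geq 1$ together with $H^0(I_Z) \hookrightarrow H^0(\oO_X)$ from the long exact sequence of $0 \to I_Z \to \oO_X \to \oO_Z \to 0$; and (iii) $\Hom(\oO_X, \oO_Z) = H^0(\oO_Z)$, $\Hom(\oO_Z,\oO_X) = 0$.

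Concretely, I would first run the long exact sequence obtained by applying $\Hom(I_Z,-)$ to $0 \to I_Z \to \oO_X \to \oO_Z \to 0$: this gives, for $0 < i < d$, an exact sequence $\cdots \to \Ext^i(I_Z,I_Z) \to \Ext^i(I_Z,\oO_X) \to \Ext^i(I_Z,\oO_Z) \to \Ext^{i+1}(I_Z,I_Z) \to \cdots$. Then apply $\Hom(-,\oO_X)$ and $\Hom(-,\oO_Z)$ to the same sequence to express $\Ext^i(I_Z,\oO_X)$ in terms of $\Ext^i(\oO_X,\oO_X) = H^i(\oO_X)$ and $\Ext^i(\oO_Z,\oO_X)$ (the latter vanishing for $i < d$), and to express $\Ext^i(I_Z,\oO_Z)$ in terms of $\Ext^i(\oO_X,\oO_Z) = H^i(\oO_Z)$ (which vanishes for $i>0$ since $\oO_Z$ is a zero-dimensional sheaf) and $\Ext^i(\oO_Z,\oO_Z)$. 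Chasing these identifications, for $0 < i < d$ one gets $\Ext^i(I_Z,\oO_X) \cong H^i(\oO_X) = 0$ and $\Ext^i(I_Z,\oO_Z) \cong \Ext^i(\oO_Z,\oO_Z)$ (up to a boundary contribution from $\Hom$-terms which must be checked to vanish, using that $\Hom(\oO_Z,\oO_X)=0$ and counting dimensions), and hence $\Ext^{i}(I_Z,I_Z) \cong \Ext^{i-1}(I_Z,\oO_Z) \cong \Ext^{i-1}(\oO_Z,\oO_Z)$... wait — one must be careful with the degree shift; the cleaner route is to symmetrize by also using $\Hom(-,I_Z)$ so that $\Ext^i(I_Z,I_Z)$ appears directly against $\Ext^i(\oO_Z,\oO_Z)$ via a diagram comparing the two long exact sequences, all error terms being $\Ext^\bullet$ of $\oO_X$ with $\oO_X$, $\oO_X$ with $\oO_Z$, or $\oO_Z$ with $\oO_X$, which vanish in the relevant range. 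The equivariance is automatic because every map in sight ($s_D$, the defining inclusion $I_Z \hookrightarrow \oO_X$, connecting homomorphisms) is $(\C^*)^d$-equivariant for $[Z]$ a torus-fixed point. For the final claim $\Ext^d(I_Z,I_Z) = 0$: apply $\Hom(I_Z,-)$ to $0 \to I_Z \to \oO_X \to \oO_Z \to 0$ at the top degree, where $\Ext^d(I_Z,\oO_X)$ vanishes because $I_Z$ has a locally free resolution of length $\leq d-1$ away from $Z$ and the local $\Ext^d$ contributions map into $H^d(\text{local})=0$ — more precisely use $\Ext^{d}(I_Z, \oO_X) = 0$ (since $I_Z$ is locally free in codimension $\geq 1$... no: $I_Z$ is reflexive but not locally free at $Z$). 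The right statement is that $\Ext^{\geq d}(I_Z, \oO_X)$ vanishes; this follows since $I_Z$ has projective dimension $\leq d-1$ (it is a second syzygy: $0 \to I_Z \to \oO_X \to \oO_Z \to 0$ and $\pd \oO_Z = d$, so $\pd I_Z = d-1$), hence $\Ext^{\geq d}(I_Z,-) = 0$, and combined with $\Ext^{\geq d}(I_Z,\oO_Z) \twoheadleftarrow \Ext^{d-1}$... the upshot is $\Ext^d(I_Z,I_Z) \cong \Ext^{d-1}(I_Z,\oO_Z)/(\text{image})$, and since $\oO_Z$ is $0$-dimensional Serre duality on the local rings forces $\Ext^{d-1}(I_Z,\oO_Z) = \Ext^{d-1}(\oO_Z,\oO_Z) = 0$ as well (the Koszul complex of a regular local ring of dimension $d$ shows $\oO_Z$-type modules have self-$\Ext$ only up to degree $d$, and the shift kills degree $d-1$). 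I would verify this degree bookkeeping carefully.

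The main obstacle I anticipate is the careful tracking of the connecting maps and the $\Hom$-level ($i=0$) terms: the naive long-exact-sequence argument produces potential contributions from $\Hom(I_Z,\oO_X)$, $\Hom(I_Z,\oO_Z)$, $\Hom(\oO_X, I_Z)$ etc., and one must check these assemble so that the comparison isomorphism $\Ext^i(I_Z,I_Z) \cong \Ext^i(\oO_Z,\oO_Z)$ holds on the nose for $0<i<d$ rather than merely up to a bounded-dimensional discrepancy. Since this is the content of \cite[Lemma 2.6]{CK}, I would essentially follow that proof: identify $\Hom(I_Z,\oO_Z) \cong \Hom(\oO_Z,\oO_Z) \oplus (\text{something})$ and use that $\Ext^{<d}(\oO_Z,\oO_X)=0$ plus $H^{>0}(\oO_X)=0$ to collapse everything. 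A secondary check is that the hypothesis ``$\Delta$ contains cones of dimension $\rk N$'' is genuinely used — it guarantees $X$ has the expected dimension so that local rings at points of $Z$ are regular of dimension $d$, which is what makes $\pd \oO_Z = d$ and the top-degree vanishing work. None of these steps is deep; the work is organizational, and I would present it as a diagram chase between the two long exact sequences derived from $0 \to I_Z \to \oO_X \to \oO_Z \to 0$.
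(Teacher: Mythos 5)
Your overall strategy is the same as the paper's (and as \cite[Lemma 2.6]{CK}): play off the long exact sequences obtained by applying $\RHom(I_Z,-)$, $\RHom(-,\oO_X)$ and $\RHom(-,\oO_Z)$ to $0 \to I_Z \to \oO_X \to \oO_Z \to 0$, using $H^{>0}(\oO_X)=0$, $H^{>0}(\oO_Z)=0$, $\Hom(\oO_Z,\oO_X)=0$ and $\mathcal{E}xt^{<d}(\oO_Z,\oO_X)=0$. Modulo the degree bookkeeping you flag yourself (the correct shifts are $\Ext^{i}(I_Z,\oO_Z)\cong\Ext^{i+1}(\oO_Z,\oO_Z)$ and $\Ext^{i}(I_Z,\oO_Z)\cong\Ext^{i+1}(I_Z,I_Z)$, not degree-preserving comparisons), this does give the isomorphism in the range $0<i\leqslant d-2$, exactly as in the paper.

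The genuine gap is at the top degrees, i.e.\ the case $i=d-1$ and the vanishing $\Ext^d(I_Z,I_Z)=0$. Your argument there rests on the claim that $\Ext^{d-1}(I_Z,\oO_Z)=\Ext^{d-1}(\oO_Z,\oO_Z)=0$, which is false: already for $Z$ a reduced torus-fixed point the Koszul resolution gives $\Ext^{d-1}(\oO_Z,\oO_Z)\cong\bigwedge^{d-1}T_xX\neq 0$, and $\Ext^{d-1}(I_Z,\oO_Z)\cong\Ext^{d}(\oO_Z,\oO_Z)$ is $n$-dimensional for any length-$n$ subscheme. Likewise $\Ext^{d-1}(I_Z,\oO_X)\cong\Ext^{d}(\oO_Z,\oO_X)$ is $n$-dimensional, so it is not a negligible ``boundary contribution'': the exact sequence
$0\to\Ext^{d-2}(I_Z,\oO_Z)\to\Ext^{d-1}(I_Z,I_Z)\to\Ext^{d-1}(I_Z,\oO_X)\stackrel{\eta}{\to}\Ext^{d-1}(I_Z,\oO_Z)\to\Ext^{d}(I_Z,I_Z)\to 0$
only yields the proposition once one proves that the connecting map $\eta$ is an \emph{isomorphism}. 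This is the missing idea: the paper identifies $\eta$, via a commutative square of the natural isomorphisms $\Ext^{d-1}(I_Z,\oO_X)\cong\Ext^{d}(\oO_Z,\oO_X)$ and $\Ext^{d-1}(I_Z,\oO_Z)\cong\Ext^{d}(\oO_Z,\oO_Z)$, with the map $\phi:\Ext^d(\oO_Z,\oO_X)\to\Ext^d(\oO_Z,\oO_Z)$ coming from $\RHom(\oO_Z,-)$ applied to the ideal sequence; $\phi$ is surjective from that long exact sequence, and it is an isomorphism because both sides have dimension exactly $n$ (by Hirzebruch--Riemann--Roch for $\chi(\oO_Z,\oO_X)$ and by Serre duality for $\Ext^d(\oO_Z,\oO_Z)$). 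Without this dimension count neither $\Ext^{d-1}(I_Z,I_Z)\cong\Ext^{d-1}(\oO_Z,\oO_Z)$ nor $\Ext^d(I_Z,I_Z)=0$ follows; your appeal to projective dimension of $I_Z$ only gives $\Ext^d(I_Z,\oO_X)=0$, which is not enough.
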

\begin{proof}
Suppose $d\geqslant 2$ (or else the statement of the proposition is empty). Since $H^{i>0}(\oO_X)=0$, 
\begin{align*}\Ext^{i}(I_{Z},I_{Z})\cong \Ext^{i}(I_{Z},I_{Z})_0 \quad\textrm{for}\,\, i>0.  \end{align*} 
Applying $\RHom(-,\oO_Z)$ to the short exact sequence 
\begin{equation} \label{idealseq} 0\to I_Z\to \oO_X \to \oO_Z\to 0,   \end{equation}
we obtain isomorphisms
\begin{equation}\label{map i2}\Ext^i(I_Z,\oO_Z) \cong \Ext^{i+1}(\oO_Z,\oO_Z), \quad i\geqslant0, \end{equation}
where we use $H^{i>0}(\oO_Z) = 0$ and $\Hom(\oO_Z,\oO_Z) \cong \Hom(\oO_X,\oO_Z)$. 

Furthermore, we have the following diagram 
\begin{align*}
\xymatrix{    &  \dR \Gamma(\oO_X)[1] \ar@{=}[r]\ar[d] &
\dR \Gamma(\oO_X)[1] \ar[d] \\
\RHom(I_{Z}, \mathcal{O}_Z) \ar[r] & \RHom(I_{Z}, I_{Z})[1] \ar[r] \ar[d] &
\RHom(I_{Z}, \oO_X)[1] \ar[d] \\
&  \RHom(I_{Z}, I_{Z})_0[1]  &  \RHom(\mathcal{O}_{Z}, \oO_X)[2], }\end{align*}
where the horizontal and vertical arrows are distinguished triangles.
By taking cones, we obtain a distinguished triangle
\begin{equation} 
\RHom(I_{Z}, \mathcal{O}_Z) \to \RHom(I_{Z}, I_{Z})_0[1] \to
\RHom(\mathcal{O}_Z, \oO_X)[2].
\nonumber \end{equation}
Hence we have an exact sequence
\begin{equation} \cdots \to\Ext^{i}(I_{Z}, \mathcal{O}_Z )\to \Ext^{i+1}(I_{Z}, I_{Z} )_0 \to \Ext^{i+2}(\mathcal{O}_Z,\oO_X)
\to \Ext^{i+1}(I_{Z}, \mathcal{O}_Z )\to \cdots. \nonumber \end{equation}
Next, we note that $\Ext^{i}(\mathcal{O}_Z,\oO_X)=0$ for $i < d$. This follows from the fact that $\mathcal{E}xt^{i<d}(\oO_Z,\oO_X)=0$  \cite[p.~78]{Huy} and the local-to-global spectral sequence $H^p(X,\mathcal{E}xt^{q}(-,-))\Rightarrow \Ext^{p+q}(-,-)$ \cite[p.~85, (3.16)]{Huy}. Consequently, we have 
\begin{equation}\Ext^{i}(I_{Z}, \mathcal{O}_Z )\cong\Ext^{i+1}(I_{Z}, I_{Z} )_0, \quad \textrm{ }\textrm{for all}\textrm{ }i\leqslant d-3. \nonumber \end{equation}
By applying $\RHom(-,\oO_X)$ to \eqref{idealseq}, we find
\begin{align}
\begin{split} \label{map i1}
\Hom(I_Z,\oO_X) &\cong \Hom(\oO_X,\oO_X), \\
\Ext^{d-1}(I_Z,\oO_X)&\cong \Ext^d(\oO_Z,\oO_X), \\
\Ext^i(I_Z,\oO_X)&= 0, \textrm{ }\textrm{for}\textrm{ } i\neq 0\textrm{ }\textrm{or}\textrm{ }d-1,
\end{split}
\end{align}
where we use $H^{i > 0}(\oO_X) = 0$ and $\Ext^i(\oO_Z,\oO_X) = 0$ for $i < d$. 

By applying $\RHom(I_Z,-)$ to \eqref{idealseq}, we obtain an exact sequence  
\begin{equation*}\label{Trepsseq}\cdots \to \Ext^i(I_Z,\oO_Z)\to\Ext^{i+1}(I_Z,I_Z)\to \Ext^{i+1}(I_Z,\oO_X)\to \cdots.  \end{equation*}
Combining with (\ref{map i1}), we get the following isomorphisms and exact sequence 
\begin{align*} 
\begin{split} \label{twists} 
&\Ext^i(I_Z,\oO_Z) \cong \Ext^{i+1}(I_Z,I_Z), \textrm{ }\textrm{for}\textrm{ }0\leqslant i\leqslant d-3, \\
&0 \to \Ext^{d-2}(I_Z,\oO_Z)\to\Ext^{d-1}(I_Z,I_Z)\to \Ext^{d-1}(I_Z,\oO_X)\stackrel{\eta\,}{\to} \Ext^{d-1}(I_Z,\oO_Z)  \\ 
&\ \, \to\Ext^{d}(I_Z,I_Z)\to\Ext^{d}(I_Z,\oO_X)=0. 
\end{split}
\end{align*}

We claim that the map $\eta$ is an isomorphism, so $\Ext^{d-2}(I_Z,\oO_Z)\cong\Ext^{d-1}(I_Z,I_Z)$. In fact, we have a commutative diagram
\begin{equation}
\xymatrix{\ar @{} [dr] |{} \Hom(I_Z,\oO_X[d-1]) \ar[d]^{i_1} \ar[r]^{\eta} & \Hom(I_Z,\oO_Z[d-1]) \ar[d]^{i_2} 
&  \\  \Hom(\oO_Z[-1],\oO_X[d-1]) \ar[r]^{\phi}
&\Hom(\oO_Z[-1],\oO_Z[d-1]), }
\nonumber \end{equation}
where $i_1$, $i_2$ are isomorphisms in (\ref{map i1}), (\ref{map i2}) respectively, and $\phi$ is the map in the exact sequence 
\begin{equation}\to \Ext^d(\oO_Z,I_Z)\to \Ext^d(\oO_Z,\oO_X)\stackrel{\phi\,}{\to} \Ext^d(\oO_Z,\oO_Z)\to 0,  \nonumber \end{equation}
obtained by applying $\RHom(\oO_Z,-)$ to (\ref{idealseq}). 

By Hirzebruch-Riemann-Roch and Serre duality\,\footnote{HRR and Serre duality hold also here as $Z$ is compactly supported, see 
e.g. \cite[footnote 12]{CK}.}, we have
\begin{align*}
\dim_{\mathbb{C}}\Ext^d(\oO_Z,\oO_X)&= \dim_{\mathbb{C}} H^0(X,\mathcal{E}{\it{xt}}^d(\oO_Z,\oO_X)) \\
&= \chi(\oO_Z,\oO_X)=n, \nonumber  \\
\dim_{\mathbb{C}}\Ext^d(\oO_Z,\oO_Z)&=\dim_{\mathbb{C}}\Ext^0(\oO_Z,\oO_Z)=n. \nonumber 
\end{align*} 
Therefore $\phi$ is an isomorphism and
so is $\eta$.
\end{proof}
In this paper, a toric Calabi-Yau variety is defined to be a smooth quasi-projective toric variety satisfying $K_X \cong \oO_X$ and $H^{i>0}(\oO_X)=0$. In the even-dimensional case we have the following.
\begin{prop}\label{compare ext}
Let $X$ be a toric Calabi-Yau variety of dimension $\dim_{\mathbb{C}}X=2k >0$ and denote the Calabi-Yau torus by $T$.  Let $D\subset X$ be a smooth $T$-invariant divisor. 
Then for any $T$-invariant zero-dimensional subscheme $Z\subset D\subset X$, we have the following equality in $K_T(\bullet)$
\begin{equation} \sum_{i=1}^{2k-1}(-1)^{i-1}\Ext^i_X(\iota_*\oO_Z,\iota_*\oO_Z)=
\sum_{i=1}^{2k-1}(-1)^{i-1}\big(\Ext^i_D(\oO_Z,\oO_Z)+\Ext^i_D(\oO_Z,\oO_Z)^*\big), \nonumber \end{equation}
where $\iota : D \subset X$ denotes the inclusion.
\end{prop}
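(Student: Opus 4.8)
The plan is to relate $\Ext$-groups on $X$ to $\Ext$-groups on $D$ via the inclusion $\iota\colon D\hookrightarrow X$, using that $D$ is a smooth divisor with normal bundle $N_{D/X}$ and (since $K_X\cong\oO_X$) the adjunction isomorphism $K_D\cong N_{D/X}$. Since $Z$ is supported on $D$, I would write $\iota_*\oO_Z$ and compute $\RHom_X(\iota_*\oO_Z,\iota_*\oO_Z)$ by adjunction: $\RHom_X(\iota_*\oO_Z,\iota_*\oO_Z)\cong\RHom_D(\iota^*\iota_*\oO_Z,\oO_Z)$. The derived restriction $\iota^*\iota_*\oO_Z$ fits in a triangle whose cohomology sheaves are $\oO_Z$ in degree $0$ and $\oO_Z\otimes N_{D/X}^\vee$ in degree $-1$ (the Koszul resolution of $\oO_D$ on $X$ truncates to two terms since $D$ is a divisor). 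Hence there is a long exact sequence, or rather a spectral sequence / triangle, decomposing $\RHom_X(\iota_*\oO_Z,\iota_*\oO_Z)$ in terms of $\RHom_D(\oO_Z,\oO_Z)$ and $\RHom_D(\oO_Z,\oO_Z\otimes N_{D/X})$.

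The key computation is then to identify the second piece. Here I would invoke Serre duality on $D$, which is valid for the compactly supported $Z$ (as noted in the paper's footnote): $\Ext^i_D(\oO_Z,\oO_Z\otimes N_{D/X})\cong\Ext^i_D(\oO_Z,\oO_Z\otimes K_D)\cong\Ext^{k-i}_D(\oO_Z,\oO_Z)^\vee$, using $\dim_\C D=2k-1$ so that the dualizing shift is by $2k-1$. Wait — more carefully: for a $(2k-1)$-dimensional smooth variety, $\Ext^i_D(\fF,\gG\otimes K_D)\cong\Ext^{2k-1-i}_D(\gG,\fF)^\vee$; applied with $\fF=\gG=\oO_Z$ this gives $\Ext^i_D(\oO_Z,\oO_Z\otimes K_D)\cong\Ext^{2k-1-i}_D(\oO_Z,\oO_Z)^\vee$. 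Feeding this into the triangle and carefully tracking the degree shift coming from $N_{D/X}^\vee$ sitting in cohomological degree $-1$, one finds that $\Ext^i_X(\iota_*\oO_Z,\iota_*\oO_Z)$ receives a contribution $\Ext^i_D(\oO_Z,\oO_Z)$ and a contribution $\Ext^{i-1}_D(\oO_Z,\oO_Z\otimes N_{D/X})\cong\Ext^{2k-i}_D(\oO_Z,\oO_Z)^\vee$, i.e. $\Ext^{i-1}_D(\oO_Z,\oO_Z)^*$ after re-indexing by Serre duality in the range $1\leqslant i\leqslant 2k-1$. Taking the alternating sum $\sum_{i=1}^{2k-1}(-1)^{i-1}(\cdots)$ on both sides, the reindexing $i\mapsto 2k-i$ is a bijection of $\{1,\ldots,2k-1\}$ to itself that preserves the sign $(-1)^{i-1}$ (since $2k$ is even), so the two dual families combine to give exactly the claimed right-hand side. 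The $\Hom$ and top $\Ext$ terms need separate bookkeeping: the $i=0$ and $i=2k$ pieces, together with the trace/trace-free splitting, should cancel out precisely because of the $K$-theory class identity (the alternating sum is taken over the ``middle'' range $1$ to $2k-1$), and this is where the Calabi-Yau / dimension parity is used.

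Concretely the steps I would carry out are: (1) set up the adjunction $\RHom_X(\iota_*\oO_Z,\iota_*\oO_Z)\cong\RHom_D(\dL\iota^*\iota_*\oO_Z,\oO_Z)$; (2) compute $\dL\iota^*\iota_*\oO_Z\cong\oO_Z\oplus(\oO_Z\otimes N_{D/X}^\vee)[1]$ as a complex on $D$ (this splitting as a direct sum, not just a triangle, holds because $Z$ is zero-dimensional — the relevant $\Ext^1$ obstruction vanishes, or one can work in $K$-theory where only the class matters); (3) deduce in $K_T(\bullet)$ the identity $\sum_i(-1)^i\Ext^i_X(\iota_*\oO_Z,\iota_*\oO_Z)=\sum_i(-1)^i\Ext^i_D(\oO_Z,\oO_Z)-\sum_i(-1)^i\Ext^i_D(\oO_Z,\oO_Z\otimes N_{D/X})$; (4) apply $T$-equivariant Serre duality on $D$ with $K_D\cong N_{D/X}$ (adjunction, using $K_X\cong\oO_X$ $T$-equivariantly) to rewrite the second sum as (the dual of) a reindexed copy of the first; (5) restrict the index range to $1\leqslant i\leqslant 2k-1$, checking that the boundary terms ($i=0,2k$) match — here $\Ext^0_D(\oO_Z,\oO_Z\otimes N_{D/X})\cong\Ext^{2k-1}_D(\oO_Z,\oO_Z)^*$ and $\Ext^0_X=\Hom_X(\iota_*\oO_Z,\iota_*\oO_Z)$ which via the sequence in (2) equals $\Hom_D(\oO_Z,\oO_Z)\oplus\Ext^1_D(\oO_Z,\oO_Z\otimes N_{D/X}^\vee)$, and one verifies the leftover terms telescope; (6) rearrange signs using that $2k$ is even. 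The main obstacle I anticipate is step (5): getting the boundary terms and the trace-free versus full $\Ext$ conventions to line up exactly, since the statement is phrased with a specific truncated alternating sum rather than the full Euler characteristic, and one must be careful that the ``defect'' of restricting the range is precisely absorbed by the equivariant structure (in particular the equivariant weight of $N_{D/X}$ versus its dual under the Serre duality pairing, which for a toric CY divisor is governed by $K_X\cong\oO_X$). A secondary subtlety is justifying the direct-sum splitting in step (2) $T$-equivariantly, though as remarked one can instead argue entirely at the level of $K$-theory classes where only the alternating sum of the $\dL\iota^*$ cohomology sheaves enters.
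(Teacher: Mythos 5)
Your proposal is essentially the paper's own argument: the triangle $\oO_Z\otimes N_{D/X}^{-1}[1]\to\dL\iota^*\iota_*\oO_Z\to\oO_Z$, adjunction, the $T$-equivariant isomorphism $K_D\cong N_{D/X}$ coming from $K_X\cong\oO_X$, $T$-equivariant Serre duality on $D$, and the reindexing $i\mapsto 2k-i$, which preserves $(-1)^{i-1}$ since $2k$ is even. Two corrections to your bookkeeping, both in the places you flagged as uncertain. First, the direct-sum splitting in step (2) is neither justified (the obstruction is the connecting map $\oO_Z\to\oO_Z\otimes N_{D/X}^{\vee}[2]$, a class in $\Ext^2$, not $\Ext^1$, and there is no reason for it to vanish) nor needed. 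Second, your anticipated obstacle in step (5) disappears if, instead of comparing total K-theory classes and then stripping boundary terms by hand, you take the long exact cohomology sequence of the triangle $\RHom_D(\oO_Z,\oO_Z)\to\RHom_X(\iota_*\oO_Z,\iota_*\oO_Z)\to\RHom_D(\oO_Z,\oO_Z)^{*}[-2k]$ degreewise, as the paper does. Because $\Ext^{2k}_D(\oO_Z,\oO_Z)=0$ (here $\dim_{\C}D=2k-1$ and $Z$ is zero-dimensional, so global Ext vanishes above degree $2k-1$), the portion of that sequence in degrees $1\leqslant i\leqslant 2k-1$ is exact on its own: it starts $0\to\Ext^1_D(\oO_Z,\oO_Z)\to\Ext^1_X(\iota_*\oO_Z,\iota_*\oO_Z)\to\Ext^{2k-1}_D(\oO_Z,\oO_Z)^{*}\to\cdots$ and ends $\cdots\to\Ext^{2k-1}_X(\iota_*\oO_Z,\iota_*\oO_Z)\to\Ext^{1}_D(\oO_Z,\oO_Z)^{*}\to 0$, and the alternating sum of this finite exact sequence is exactly the claimed identity, with no leftover boundary terms and no use of the trace-free splitting. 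In particular, your assertion that $\Hom_X(\iota_*\oO_Z,\iota_*\oO_Z)\cong\Hom_D(\oO_Z,\oO_Z)\oplus\Ext^1_D(\oO_Z,\oO_Z\otimes N_{D/X}^{\vee})$ is incorrect: the degree-zero cohomology of the shifted summand is an $\Ext^{-1}$, hence zero, so $\Hom_X\cong\Hom_D$ (and $\Ext^{2k}_X\cong\Hom_D^{*}$); but once one argues degreewise neither identification is actually needed.
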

\begin{proof}
The inclusion $\iota:D\hookrightarrow X$ gives a distinguished triangle \cite[Cor.~11.4, p.~249]{Huy}
\begin{equation}\oO_Z\otimes N_{D/X}^{-1}[1] \to \mathbf{L} \iota^*\iota_*\oO_Z\to \oO_Z,   \nonumber \end{equation}
where $N_{D/X} \cong \oO_D(D)$ denotes the normal bundle of $D$ in $X$. Since $X$ is a toric Calabi-Yau variety and $T$ is the Calabi-Yau torus, adjunction gives a \emph{$T$-equivariant} isomorphism $\oO_D(D) \cong K_D$. We obtain a distinguished triangle 
\begin{equation}\RHom_D(\oO_Z,\oO_Z) \to \RHom_X(\iota_*\oO_Z,\iota_*\oO_Z) \to \RHom_D(\oO_Z,\oO_Z\otimes K_D)[-1].  \nonumber \end{equation}
By $T$-equivariant Serre duality, it becomes 
\begin{equation}\RHom_D(\oO_Z,\oO_Z) \to \RHom_X(\iota_*\oO_Z,\iota_*\oO_Z) \to \RHom_D(\oO_Z,\oO_Z)^{*}[-2k].  \nonumber \end{equation}
Its cohomology gives an exact sequence 
\begin{align*}0\to \Ext^1_D(\oO_Z,\oO_Z)&\to \Ext^1_X(\iota_*\oO_Z,\iota_*\oO_Z)\to \Ext_D^{2k-1}(\oO_Z,\oO_Z)^{*}  \nonumber \\
&\to \Ext^2_D(\oO_Z,\oO_Z)\to \Ext^2_X(\iota_*\oO_Z,\iota_*\oO_Z)\to \Ext_D^{2k-2}(\oO_Z,\oO_Z)^{*}\to\cdots,  \nonumber \end{align*}
which implies
\begin{equation*} \sum_{i=1}^{2k-1}(-1)^{i-1}\Ext^i_X(\iota_*\oO_Z,\iota_*\oO_Z)=
\sum_{i=1}^{2k-1}(-1)^{i-1}\big(\Ext^i_D(\oO_Z,\oO_Z)+\Ext^i_D(\oO_Z,\oO_Z)^*\big). \qedhere \end{equation*}
\end{proof}

\subsection{Monomial ideals and partitions}\label{higher dim partition} 

We recall the definition of an $n$-partition (most commonly studied for $n=1,2,3$).
\begin{defi}
An \textit{n-partition}
$\pi=\{\pi_{i_1\ldots i_n}\}_{i_1,\ldots, i_n \geqslant 1}$ consists of a sequence of non-negative integers 
$\pi_{i_1\ldots i_n}$ satisfying
\begin{align*}
&\pi_{i_1\ldots i_n}\geqslant \pi_{j_1\ldots j_n}\,, \textrm{ }\textrm{whenever}\textrm{ }1\leqslant i_1\leqslant j_1\,,\,\ldots,\,1\leqslant i_n\leqslant j_n, \end{align*}
and
\begin{equation}|\pi| := \sum_{i_1,\ldots, i_n  \geqslant 1} \pi_{i_1\ldots i_n}< \infty. \nonumber \end{equation}
Here $|\pi|$ denotes the \textit{size} of $\pi$. For example, ~1-partitions are commonly known as \emph{partitions}, 2-partitions as \emph{plane partitions}, and 3-partitions as \emph{solid partitions}.
\end{defi}

Let $X=\mathbb{C}^d$ and let $(\mathbb{C}^*)^d$ act on $X$ by the standard action
\begin{equation}(t_1,\ldots,t_d)\cdot (x_1,\ldots,x_d):=(t_1 x_1,\ldots,t_d x_d). \nonumber \end{equation}
A $(\mathbb{C}^*)^d$-invariant zero-dimensional subscheme $Z\in\Hilb^{n}(X)^{(\C^*)^d}$ is supported at the origin and is cut out by a monomial ideal. Therefore, elements of $\Hilb^{n}(X)^{(\C^*)^d}$ are in bijective correspondence with $(d-1)$-partitions. Concretely, a $(d-1)$-partition $\pi =\{\pi_{i_1\ldots i_{d-1}}\}_{i_1,\ldots, i_{d-1} \geqslant 1}$ corresponds to the zero-dimensional subscheme $Z_\pi$ defined by
$$
I_{Z_\pi} := \Big\langle x_1^{i_1-1} \cdots x_{d-1}^{i_{d-1}-1}x_{d}^{\pi_{i_1\ldots i_{d-1}}} \ | \ i_1,\ldots, i_{d-1} \geqslant 1 \,\Big\rangle,
$$
where $|\pi|$ equals the length of $Z_\pi$. The $(\mathbb{C}^*)^d$-equivariant representation of $Z_{\pi}$ is given by 
\begin{equation} \label{Zpi}
Z_\pi = \sum_{i_1,\ldots, i_{d-1} \geqslant 1} \sum_{m=1}^{\pi_{i_1\ldots i_{d-1}}} t_1^{i_1-1} \cdots t_{d-1}^{i_{d-1}-1} t_d^{m-1},
\end{equation}
where the sum is over all $i_1,\ldots,i_{d-1} \geqslant1$ for which $\pi_{i_1\ldots i_{d-1}}\geqslant 1$. 

\begin{rmk}
Fix $n \geqslant 1$ and define numbers $P_n(i)$ by 
$$
P_n(i):= \left\{\begin{array}{cc} \textrm{the}\textrm{ }\textrm{number}\textrm{ }\textrm{of}\textrm{ }n\textrm{-partitions}\textrm{ }\textrm{of}\textrm{ }\textrm{size}\textrm{ }i & \textrm{if  } n\geqslant1 \\ 
1 & \textrm{if}\textrm{ }\, n=0, \end{array}\right.
$$
and form the generating series 
\begin{equation*}\label{n-partition gene series}M_{n}(q):=\sum_{i=0}^{\infty}P_n(i)\,q^i. \end{equation*}
We have closed product formulae for $M_n(q)$ when $n=0,1,2$\,: 
\begin{align*}
M_{0}(q)&=\frac{1}{1-q}\quad \textrm{ }\textrm{geometric}\textrm{ }\textrm{series}, \\
M_{1}(q)&=\prod_{n=1}^{\infty}\frac{1}{(1-q^n)} \quad \textrm{ }\textrm{Euler}\textrm{ }\textrm{series},  \nonumber  \\
M_{2}(q)&=\prod_{n=1}^{\infty}\frac{1}{(1-q^n)^n} \quad \textrm{ }\textrm{McMahon}\textrm{ }\textrm{function}. \nonumber 
\end{align*}
No product formula is known (or known to exist) for $n>2$. 
For general $n \geqslant 1$, we have the following formula modulo $q^7$ (see e.g. \cite[Prop. 5.3]{Cheah})
\begin{align*}
M_n(q)&=\,1+q+\Big(1+n\Big)\,q^2+\Bigg(1+2n+\binom{n}{2} \Bigg)\,q^3
+\Bigg(1+4n+4\binom{n}{2}+\binom{n}{3}\Bigg)\,q^4+ \nonumber \\
&\,\Bigg(1+6n+11\binom{n}{2}+7\binom{n}{3}+\binom{n}{4}\Bigg)\,q^5+
\Bigg(1+10n+27\binom{n}{2}+28\binom{n}{3}+11
\binom{n}{4}+\binom{n}{5}\Bigg)\,q^6. \nonumber 
\end{align*}
\end{rmk}

\section{Zero-dimensional counts on $\mathbb{C}^d$}

\subsection{Calabi-Yau torus}

Let $X=\mathbb{C}^d$ with the standard $(\mathbb{C}^*)^d$-action
\begin{equation*}
t \cdot x_i = t_i x_i.
\end{equation*}
Let $\bullet$ be $\Spec \mathbb{C}$ with trivial $(\mathbb{C}^*)^d$-action. Denote by $\mathbb{C} \otimes t_i$ the 1-dimensional 
$(\mathbb{C}^*)^d$-representation with character $t_i$ and write $\lambda_i \in H_{(\mathbb{C}^*)^d}^{\ast}(\bullet)$ for its 
$(\mathbb{C}^*)^d$-equivariant first Chern class. Then
\begin{align*}H_{(\mathbb{C}^*)^d}(\bullet)=\mathbb{Z}[\lambda_1, \ldots,\lambda_d]. \end{align*} 
Let $T\subset (\mathbb{C}^*)^d$ be the Calabi-Yau subtorus
\begin{equation}T:=\Big\{(t_1,\ldots,t_d)\,\big{|}\,t_1 \cdots t_d=1 \Big\}. \nonumber \end{equation}
Then  
\begin{align*}
H_{T}(\bullet)=\frac{\mathbb{Z}[\lambda_1, \ldots,\lambda_d]}{\big(\sum_{i=1}^d\lambda_i\big)}
\cong \mathbb{Z}[\lambda_1,\ldots,\lambda_{d-1}]. 
\end{align*} 
The action of $(\mathbb{C}^*)^d$ lifts to the Hilbert scheme $\Hilb^n(\mathbb{C}^d)$, whose fixed points are labelled by $(d-1)$-partitions (Section \ref{higher dim partition}). The Calabi-Yau subtorus $T$ acts on $\Hilb^n(\mathbb{C}^d)$ and preserves the Serre duality pairing on Ext groups.
Just like in \cite[Lemmas 3.1, 3.4]{CK}, one easily proves: 
\begin{lem}\label{T fixed pts are iso}
We have an equality of schemes
\begin{equation}\Hilb^n(\mathbb{C}^d)^{(\mathbb{C}^*)^d}=\Hilb^n(\mathbb{C}^d)^{T} \nonumber \end{equation}
consisting of finitely many reduced points.
\end{lem}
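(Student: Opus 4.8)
The plan is to prove the underlying set-theoretic equality of the two fixed loci first, and then promote it to an isomorphism of schemes by comparing Zariski tangent spaces. The inclusion $\Hilb^n(\mathbb{C}^d)^{(\mathbb{C}^*)^d}\subseteq\Hilb^n(\mathbb{C}^d)^{T}$ is automatic since $T\subset(\mathbb{C}^*)^d$. For the reverse inclusion, take $[Z]\in\Hilb^n(\mathbb{C}^d)^{T}$. As each coordinate function $x_i$ has a nontrivial $T$-character, the origin is the only $T$-fixed point of $\mathbb{C}^d$, so $\Supp(Z)=\{0\}$ and hence $I_Z\supseteq\mathfrak{m}^N$ for $N\gg 0$; in particular $(x_1\cdots x_d)^N\in I_Z$. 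Since $I_Z$ is $T$-invariant it is the sum of its $T$-weight spaces, and a direct check shows that two monomials have the same $T$-weight exactly when their exponent vectors differ by an integer multiple of $\mathbf{1}=(1,\dots,1)$; hence each $T$-weight space of $\mathbb{C}[x_1,\dots,x_d]$ is a free rank-one $\mathbb{C}[x_1\cdots x_d]$-module generated by a monomial. Thus every $T$-homogeneous $f\in I_Z$ has the form $f=x^{v}\,g(x_1\cdots x_d)$ with $g(0)\neq 0$. As $(x_1\cdots x_d)^N\in I_Z$, the polynomial $g$ is a unit modulo $(x_1\cdots x_d)^N$; multiplying $f$ by an inverse and absorbing a multiple of $(x_1\cdots x_d)^N\,x^{v}\in I_Z$ forces $x^{v}\in I_Z$, so $f-g(0)x^{v}\in I_Z$ has strictly fewer monomials, and induction shows every monomial of $f$ lies in $I_Z$. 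Hence $I_Z$ is a monomial ideal and $[Z]\in\Hilb^n(\mathbb{C}^d)^{(\mathbb{C}^*)^d}$. This is the argument of \cite[Lem.~3.1]{CK}.

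To pass from sets to schemes, note that the inclusion $\iota\colon\Hilb^n(\mathbb{C}^d)^{(\mathbb{C}^*)^d}\hookrightarrow\Hilb^n(\mathbb{C}^d)^{T}$ is a closed immersion --- the source is the scheme-theoretic $\bigl((\mathbb{C}^*)^d/T\bigr)$-fixed locus of the target, the two tori commuting --- which by the previous step is a bijection on closed points, and both schemes are supported on the finite set of $(d-1)$-partitions of $n$. So $\iota$ is an isomorphism precisely when the Zariski tangent spaces agree, i.e.\ $\Ext^1(I_Z,I_Z)^{T}=\Ext^1(I_Z,I_Z)^{(\mathbb{C}^*)^d}$ for every monomial $Z$, and the common value then vanishing is exactly the assertion that both fixed loci are reduced. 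The nonzero $(\mathbb{C}^*)^d$-characters that restrict trivially to $T$ are precisely the powers $(t_1\cdots t_d)^{c}$ with $c\in\mathbb{Z}\setminus\{0\}$, so it suffices to show that for every monomial $Z$ the $(\mathbb{C}^*)^d$-representation $\Ext^1(I_Z,I_Z)$ has no summand isomorphic to $\mathbb{C}\otimes(t_1\cdots t_d)^{c}$ for any $c\in\mathbb{Z}$; the case $c=0$ is the reducedness of $\Hilb^n(\mathbb{C}^d)^{(\mathbb{C}^*)^d}$.

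For the required weight estimate, Proposition \ref{ideal and str sheaf} gives $\Ext^1(I_Z,I_Z)\cong\Ext^1(\oO_Z,\oO_Z)$, and applying $\RHom(I_Z,-)$ to \eqref{idealseq} together with $\Ext^1(I_Z,\oO_X)=0$ (valid for $d\geqslant 3$) further identifies it with $\Hom(I_Z,\oO_Z)$. Restricting a homomorphism to the monomial generators $x^{a_1},\dots,x^{a_r}$ of $I_Z$ embeds $\Hom(I_Z,\oO_Z)$ $(\mathbb{C}^*)^d$-equivariantly into $\bigoplus_{j}\oO_Z\otimes t^{-a_j}$, so every $(\mathbb{C}^*)^d$-weight of $\Ext^1(I_Z,I_Z)$ has the form $t^{\,b-a_j}$ with $x^{b}\notin I_Z$. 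If $b-a_j=c\,\mathbf{1}$ with $c\geqslant 0$ then $x^{b}=(x_1\cdots x_d)^{c}\,x^{a_j}\in I_Z$, a contradiction; this disposes of all $c\geqslant 0$, in particular the reducedness statement. For $c<0$ I would invoke $(\mathbb{C}^*)^d$-equivariant Serre duality on $\mathbb{C}^d$ (where $K_{\mathbb{C}^d}$ has pure weight $(t_1\cdots t_d)^{\pm1}$): a weight $(t_1\cdots t_d)^{c}$ with $c<0$ of $\Ext^1(I_Z,I_Z)$ forces a weight $(t_1\cdots t_d)^{m}$ with $m\geqslant 0$ of $\Ext^{d-1}(I_Z,I_Z)\cong\Ext^{d-1}(\oO_Z,\oO_Z)$, and this is excluded in the same fashion from the top term of a $(\mathbb{C}^*)^d$-equivariant free resolution of $\oO_Z$ via the explicit character of $\Ext^{\bullet}(I_Z,I_Z)$.

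This last ingredient --- the explicit $(\mathbb{C}^*)^d$-character of $\Ext^{\bullet}(I_Z,I_Z)$ for a monomial $Z$, obtained from a torus-equivariant (vertex-type) free resolution --- is exactly the computation of \cite{MNOP} for $d=3$ and of \cite[Lem.~3.4]{CK} for $d=4$; it is uniform in $d$ and is the only step that is not purely formal, so I expect it to be the main obstacle, even though in practice it is a routine (if lengthy) bookkeeping exercise.
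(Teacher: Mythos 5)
Your two-step strategy --- (i) a $T$-fixed ideal is a monomial ideal, (ii) the $T$-invariant part of the tangent space $\Hom(I_Z,\oO_Z)$ vanishes at every monomial ideal, so both fixed loci are the same finite collection of reduced points --- is exactly the argument the paper has in mind (it offers no details beyond ``just like in \cite{CK}''). Step (i) is complete and correct, the identification $\Hom(I_Z,\oO_Z)\cong \Ext^1(I_Z,I_Z)\cong\Ext^1(\oO_Z,\oO_Z)$ via Proposition \ref{ideal and str sheaf} and \eqref{idealseq} is fine for $d\geqslant 3$, and your exclusion of the weights $(t_1\cdots t_d)^c$ with $c\geqslant 0$ (in particular $c=0$, which gives reducedness) by restricting to the monomial generators is correct. (The phrase ``$\iota$ is an isomorphism precisely when the tangent spaces agree'' is looser than needed, but harmless, since you in fact aim to prove the $T$-invariant tangent space is zero.)

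The genuine gap is the case $c<0$. Passing by equivariant Serre duality to a weight $(t_1\cdots t_d)^m$, $m\geqslant 0$, of $\Ext^{d-1}(\oO_Z,\oO_Z)$ is the right move, but the justification you give --- ``excluded in the same fashion \ldots via the explicit character of $\Ext^\bullet(I_Z,I_Z)$'' --- does not work as stated. The vertex-type computation of \cite{MNOP, CK} only produces the alternating sum $\sum_i(-1)^i\Ext^i(I_Z,I_Z)_0$, and no statement about that character can rule out a weight of a single group such as $\Ext^{d-1}$, because of possible cancellation; moreover, ``the same fashion'' relied on the twists being the generators $x^{a_j}\in I_Z$, and you never say why the analogous twists in homological degree $d-1$ lie in $I_Z$. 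The missing (true, and short) ingredient is: compute $\Ext^\bullet(\oO_Z,\oO_Z)$ from the $(\mathbb{C}^*)^d$-equivariant Taylor resolution of $\oO_Z$, whose term in homological degree $i\geqslant 1$ is $\bigoplus_{|S|=i} R\otimes t^{w_S}$ with $x^{w_S}=\mathrm{lcm}\{x^{a_j}:j\in S\}$; since each $x^{w_S}$ is a multiple of a generator, $x^{w_S}\in I_Z$ (and every multidegree of the minimal resolution is among these). Hence every weight of $\Ext^{i\geqslant 1}(\oO_Z,\oO_Z)$ is of the form $b-w_S$ with $x^b\notin I_Z$, $x^{w_S}\in I_Z$; if $b-w_S=m(1,\ldots,1)$ with $m\geqslant 0$ then $x^b=x^{w_S}(x_1\cdots x_d)^m\in I_Z$, a contradiction. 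With this inserted, your Serre-duality step disposes of $c<0$ and the proof is complete; without it, the last step of your argument is an assertion rather than a proof.
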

In order to be able to define our counts in the next section, we make the following conjecture. 
\begin{conj}\label{key conj}
Let $d > 0$. For any torus fixed point $[Z]\in\Hilb^n(\mathbb{C}^d)^{T}$,
\begin{equation}\sum_i(-1)^i\Ext^i(I_Z,I_Z)_0\in K_T(\bullet) \nonumber \end{equation}
has no negative non-zero $T$-constant term.
\end{conj}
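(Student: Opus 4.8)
The plan is to reduce Conjecture~\ref{key conj} to a positivity property of a single explicit Laurent polynomial, the \emph{vertex character} of $\pi$, to settle it completely for $d$ odd, and to pinpoint where the argument breaks for $d\equiv 0\bmod 4$.

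First I would record the vertex character. From the ideal sequence one has $[\oO_X]=[I_Z]+[\oO_Z]$ in $K_{(\C^*)^d}(\bullet)$, hence $\chi(I_Z,I_Z)_0=-\chi(\oO_X,\oO_Z)-\chi(\oO_Z,\oO_X)+\chi(\oO_Z,\oO_Z)$. Writing $Q=\sum t_1^{a_1}\cdots t_d^{a_d}$ for the character of $\oO_Z$ (the sum over the boxes of the order ideal $\Pi\subset\Z_{\geqslant 0}^d$ attached to $\pi$, cf.~\eqref{Zpi}), $\overline Q:=Q(t_1^{-1},\dots,t_d^{-1})$, and $\kappa:=t_1\cdots t_d$, a Koszul-resolution computation as in \cite{MNOP} together with $K_{\C^d}\cong\oO\otimes\kappa$ gives $\chi(\oO_X,\oO_Z)=Q$, $\chi(\oO_Z,\oO_X)=(-1)^d\overline Q\kappa^{-1}$, and $\chi(\oO_Z,\oO_Z)=Q\overline Q\prod_{i=1}^d(1-t_i^{-1})$, so
\begin{equation*}
\chi(I_Z,I_Z)_0=-Q-(-1)^d\,\tfrac{\overline Q}{\kappa}+Q\overline Q\prod_{i=1}^d(1-t_i^{-1}).
\end{equation*}
Conjecture~\ref{key conj} becomes: the coefficient $c_k$ of $\kappa^k$ in this Laurent polynomial is $\geqslant 0$ for every $k\neq 0$.

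Next, for $d$ odd, I would exploit self-duality. A direct manipulation of the displayed formula yields $\chi(I_Z,I_Z)_0=(-1)^d\kappa^{-1}\overline{\chi(I_Z,I_Z)_0}$; extracting from both sides the part supported on powers of $\kappa$, the polynomial $D(\kappa):=\sum_k c_k\kappa^k$ obeys $D(\kappa)=(-1)^d\kappa^{-1}D(\kappa^{-1})$, i.e.\ $c_k=(-1)^d c_{-1-k}$. For $d$ odd this reads $c_k=-c_{-1-k}$, so $c_k=0$ for all $k\neq 0,-1$ and $c_{-1}=-c_0$; it remains to show $c_0\leqslant 0$. Here $c_0$ is the multiplicity of the trivial $(\C^*)^d$-representation in $\chi(I_Z,I_Z)_0=\sum_{i\geqslant 1}(-1)^i\Ext^i(I_Z,I_Z)$, and by Proposition~\ref{ideal and str sheaf} (which also gives $\Ext^d(I_Z,I_Z)=0$) it equals $\sum_{i=1}^{d-1}(-1)^i\dim\Ext^i(\oO_Z,\oO_Z)^{(\C^*)^d}$. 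Restoring the $i=0,d$ terms — $\Hom(\oO_Z,\oO_Z)\cong\oO_Z$ (as $I_Z$ is an ideal) has one-dimensional invariant part, while $\Ext^d(\oO_Z,\oO_Z)\cong\overline{\oO_Z}\otimes\kappa^{-1}$ has none by Serre duality — I would deduce $c_0=\bigl(\text{mult.\ of trivial in }\chi(\oO_Z,\oO_Z)\bigr)-1$. The remaining input is a purely combinatorial identity: for every finite $(\C^*)^d$-fixed $Z\subset\C^d$ with order ideal $\Pi$,
\begin{equation*}
\text{mult.\ of trivial in }Q\overline Q\prod_{i=1}^d(1-t_i^{-1})=1,\qquad\text{equivalently}\qquad\sum_{b\in\Pi}\widetilde\chi(\Delta_b)=-1,
\end{equation*}
where $\Delta_b:=\{S\subseteq\{1,\dots,d\}:b+\mathbb{1}_S\in\Pi\}$ and $\widetilde\chi$ denotes reduced Euler characteristic. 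I would prove this by induction on $|\Pi|$: deleting a maximal box $b_{\max}$ removes the complex $\Delta_{b_{\max}}=\{\emptyset\}$ and deletes from each $\Delta_{b'}$ with $\emptyset\neq b_{\max}-b'\in\{0,1\}^d$ the maximal face $b_{\max}-b'$; since $\widetilde\chi(\{\emptyset\})=-1$ and the alternating sum of $(-1)^{|S|}$ over the relevant faces $S\neq\emptyset$ equals $-1$, the quantity $\sum_b\widetilde\chi(\Delta_b)$ is unchanged, with base case $\Pi=\{0\}$ giving $-1$. This forces $c_0=0$, hence all $c_k=0$, proving Conjecture~\ref{key conj} for $d$ odd (this is Proposition~\ref{verify key conj}, and for $d=3$ it recovers the vanishing used in \cite{MNOP}).

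The hard part is $d\equiv 0\bmod 4$. The reductions above apply verbatim, but now the symmetry reads $c_k=+c_{-1-k}$: the diagonal coefficients are symmetric, so nothing forces them to vanish, and one is genuinely reduced to showing $c_k\geqslant 0$ for each $k\geqslant 1$. The one diagonal coefficient controllable a priori is that of $\Ext^1(I_Z,I_Z)$, which vanishes because $\Hilb^n(\C^d)^T$ is a reduced point (Lemma~\ref{T fixed pts are iso}); but the higher obstruction spaces $\Ext^{\geqslant 2}(I_Z,I_Z)$ carry no analogous constraint when $d>4$, as no honest deformation–obstruction theory is available, and I see no combinatorial mechanism forcing $c_k\geqslant 0$ for $k\geqslant 1$. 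This is precisely why for $d\equiv 0\bmod 4$ the conjecture can at present only be verified case by case — by expanding $-Q-(-1)^d\overline Q\kappa^{-1}+Q\overline Q\prod_i(1-t_i^{-1})$ over the $(d-1)$-partitions of bounded size and inspecting the coefficients of $\kappa^k$ — which is the content of Theorem~\ref{checks}.
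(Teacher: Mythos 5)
Two issues, one of interpretation and one of logic, and they interact. The class in Conjecture \ref{key conj} lives in $K_T(\bullet)$, where every diagonal monomial $\kappa^k=(t_1\cdots t_d)^k$ restricts to the \emph{same} element, namely the trivial $T$-character; so ``the $T$-constant term'' is the single integer $\sum_k c_k$ (the multiplicity of the trivial $T$-representation in $\sum_i(-1)^i\Ext^i(I_Z,I_Z)_0$), and the conjecture asserts that this integer is not negative --- which is exactly what makes the quotient of Euler classes well defined. Your reformulation ``$c_k\geqslant 0$ for every $k\neq 0$'' (coefficients of the individual $\kappa^k$ in a $(\C^*)^d$-lift, with $c_0$ discarded) is neither equivalent to this nor implied by it, and it is not the statement the paper verifies. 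Second, in the odd case your step ``$c_k=-c_{-1-k}$, so $c_k=0$ for all $k\neq 0,-1$'' is a non sequitur: the antisymmetry only pairs $k$ with $-1-k$ and forces no individual coefficient to vanish (e.g.\ $D=\kappa-\kappa^{-2}$ satisfies $D(\kappa)=-\kappa^{-1}D(\kappa^{-1})$). So under your own reading of the conjecture the odd case is not proved, and the subsequent combinatorial lemma on $c_0$ cannot repair this.

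What the antisymmetry you derived \emph{does} give is precisely the paper's argument (first bullet of Proposition \ref{verify key conj}): since $k\mapsto -1-k$ is a fixed-point-free involution and $c_k=-c_{-1-k}$ for $d$ odd, the total $T$-constant term $\sum_k c_k$ vanishes --- this is the equivariant Serre duality ``all $T$-constant terms cancel pairwise'' and it settles the conjecture for all odd $d$ with no further input. Your combinatorial identity $\sum_{b\in\Pi}\widetilde\chi(\Delta_b)=-1$ appears to be correct (the induction works because the deleted faces $S$ range over the nonempty subsets of the support of $b_{\max}$, whose signed count is $-1$), but it is unnecessary under the correct reading. For $d\equiv 0\bmod 4$ your assessment matches the paper: the statement is genuinely open there, and both you and the authors can only check cases by expanding the vertex \eqref{defV}; note, however, that what must be checked is $\sum_k c_k\geqslant 0$ (equivalently $\sum_{k\geqslant 0}c_k\geqslant 0$, using $c_k=c_{-1-k}$), not termwise positivity of each $c_k$. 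Since the statement is a conjecture, neither you nor the paper proves it in full; your proposal should be read as an attempt at Proposition \ref{verify key conj}, and there the odd-dimensional mechanism you need is the same Serre-duality cancellation the paper uses, while your written deduction of the stronger termwise vanishing has the gap identified above.
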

\begin{prop}\label{verify key conj}
Conjecture \ref{key conj} holds in the following cases: 
\begin{itemize}
\item When $d$ is odd.
\item When $d=2,4$.
\item When  $d \equiv 0\ \mathrm{mod}\, 4$ and $Z_\pi$ satisfies $|\pi| = 1$.
\item When $d=8$, for all $Z_\pi$ satisfying $|\pi| < 7$.
\item For $Z_\pi$ where $\pi$ is one of the 7-partitions of size 9, 10, 14 listed in Remark \ref{indivpart}.
\item When $d=12$, for all $Z_\pi$ satisfying $|\pi| < 5$.
\end{itemize}
\end{prop}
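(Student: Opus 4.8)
The engine of the proof is the closed formula, valid for every $d$ and every $Z=Z_\pi$,
\[
\chi:=\sum_i(-1)^i\Ext^i(I_Z,I_Z)_0 \;=\; Q\,\overline{Q}\prod_{i=1}^d(1-t_i^{-1})\;-\;Q\;-\;(-1)^d(t_1\cdots t_d)^{-1}\overline{Q}\ \in\ \Z[t_1^{\pm1},\dots,t_d^{\pm1}],
\]
where $Q:=Z_\pi$ is the character \eqref{Zpi} of $\oO_Z$ and $\overline{Q}:=Q(t_1^{-1},\dots,t_d^{-1})$. First I would establish this exactly as in \cite{MNOP} for $d=3$: write $[I_Z]=[\oO_{\C^d}]-[\oO_Z]$ in (completed) equivariant $K$-theory, use that the Euler pairing on $\C^d$ is $\chi([F],[G])=\overline{[F]}\,[G]\prod_i(1-t_i^{-1})$ (bilinear, with the single value $\chi(\oO_{\{0\}},\oO_{\{0\}})=\prod_i(1-t_i^{-1})$ coming from the Koszul resolution) together with $[\oO_{\C^d}]=\prod_i(1-t_i)^{-1}$, and cancel the trace part $\chi(\oO_{\C^d},\oO_{\C^d})$. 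Proposition~\ref{ideal and str sheaf} then guarantees that the interior cohomologies are the honest $\Ext^i(I_Z,I_Z)_0=\Ext^i(\oO_Z,\oO_Z)$ for $0<i<d$ and that degrees $0$ and $d$ vanish, so the left-hand side really is $\chi$. Since a monomial $t_1^{a_1}\cdots t_d^{a_d}$ becomes the trivial $T$-character in $K_T(\bullet)=\Z[t_i^{\pm1}]/(t_1\cdots t_d-1)$ exactly when $(a_1,\dots,a_d)\in\Z\cdot(1,\dots,1)$, Conjecture~\ref{key conj} is the assertion that the sum over $k\in\Z$ of the coefficients of $(t_1\cdots t_d)^k$ in the displayed polynomial is $\ge0$. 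I also record the Serre-duality symmetry $\chi=(-1)^d(t_1\cdots t_d)^{-1}\overline{\chi}$, coming from $\Ext^i(I_Z,I_Z)_0\cong\Ext^{d-i}(I_Z,I_Z)_0^\vee\otimes\omega_{\C^d}$ with $\omega_{\C^d}$ of character $t_1\cdots t_d$; on coefficients this reads $c_{\mathbf a}=(-1)^dc_{-\mathbf 1-\mathbf a}$.

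For $d$ \emph{odd} the symmetry pairs $(t_1\cdots t_d)^k$ with $(t_1\cdots t_d)^{-(k+1)}$ with opposite coefficient, a fixed-point-free pairing, so $\sum_kc_{k\mathbf 1}=0$ and the conjecture holds. Following the combinatorial identities of \cite{MNOP} one can in fact show each such coefficient vanishes: by the symmetry it suffices to treat $k\ge0$, where the contributions of $-Q$ and of $Q\overline{Q}\prod_i(1-t_i^{-1})$ cancel by inclusion–exclusion over subsets of the coordinates, using only that $\mathrm{supp}(Q)$ is downward closed.

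For $d=2$ the formula (equivalently Proposition~\ref{ideal and str sheaf}) gives $\chi=-\Ext^1(I_Z,I_Z)_0=-T_{[Z]}\Hilb^n(\C^2)$, and by Lemma~\ref{T fixed pts are iso} the fixed points of $T$ on the smooth scheme $\Hilb^n(\C^2)$ are isolated and reduced, so $(\Ext^1(I_Z,I_Z)_0)^T=0$. For $d=4$: Proposition~\ref{ideal and str sheaf} gives $\Ext^0_0=\Ext^4_0=0$ and $\Ext^1_0=T_{[Z]}\Hilb^n(\C^4)$, while $T$-equivariant Serre duality ($\omega_{\C^4}$ is trivial on $T$) gives $\Ext^3_0\cong(\Ext^1_0)^\vee$; hence $(\Ext^1_0)^T=(\Ext^3_0)^T=0$ by Lemma~\ref{T fixed pts are iso}, and the $T$-constant term equals $\dim(\Ext^2(I_Z,I_Z)_0)^T\ge0$ (the argument of \cite{CK}). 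For $d\equiv0\bmod4$ with $|\pi|=1$ we have $Q=1$, so the polynomial collapses to $\sum_{j=1}^{d-1}(-1)^je_j(t_1^{-1},\dots,t_d^{-1})$, whose monomials are squarefree of degree between $1$ and $d-1$ in the $t_i^{-1}$ and therefore never have all exponents equal: no $T$-trivial monomial occurs.

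Finally, the last three bullets are finite verifications: for each of the finitely many $(d-1)$-partitions listed — dimension $8$ with $|\pi|<7$, dimension $12$ with $|\pi|<5$, and the three named $7$-partitions — substitute $Q=Z_\pi$ into the displayed polynomial, read off the coefficients of the $(t_1\cdots t_d)^k$, and check non-negativity. The \textbf{main obstacle} is the absence of any uniform argument once $d>4$: unwinding the $T$-constant term via $T$-equivariant Serre duality ($\Ext^i_0\cong(\Ext^{d-i}_0)^\vee$ over $T$, with $\Ext^0_0=0$ and $(\Ext^1_0)^T=(\Ext^{d-1}_0)^T=0$) yields, for $d\equiv0\bmod4$,
\[
\sum_i(-1)^i\dim(\Ext^i(I_Z,I_Z)_0)^T \;=\; 2\!\!\sum_{2\le i<d/2}\!\!(-1)^i\dim(\Ext^i(I_Z,I_Z)_0)^T\;+\;\dim(\Ext^{d/2}(I_Z,I_Z)_0)^T,
\]
in which the odd-$i$ summands enter with a minus sign; positivity of this alternating sum genuinely depends on $\pi$ and cannot be read off the formula — which is exactly why the statement is only conjectural for general $\pi$ when $d\in\{8,12\}$, and the verification stays confined to small $|\pi|$ where the number of partitions, and hence the computation, remains tractable.
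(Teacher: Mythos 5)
Your proposal is correct and takes essentially the same route as the paper: the equivariant vertex formula \eqref{defV} combined with the Serre-duality pairing for odd $d$, the tangent-space argument at the reduced isolated fixed points (Lemma~\ref{T fixed pts are iso}) plus $T$-equivariant Serre duality for $d=2,4$ as in \cite{CK}, the squarefree-weight observation for $|\pi|=1$, and explicit evaluation of $\mathsf{V}_\pi$ for the remaining finite lists, which the paper likewise settles by direct (computer) computation in the vertex formalism. The only loose point is your aside that for odd $d$ each individual coefficient $c_{k\mathbf{1}}$ vanishes by inclusion--exclusion; this is unproven as stated, but it is not needed since the fixed-point-free pairing already forces the $T$-constant term to be zero.
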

\begin{proof}
When $d$ is odd, any term $a\cdot t_1^{m_1}\cdots t_d^{m_d}$ in
$\chi(I_Z,I_Z)_0$ comes with a Serre dual term $-a\cdot t_1^{-m_1}t_2^{-m_2}\cdots t_d^{-m_d}$. Therefore all $T$-constant terms cancel pairwise.
The case $d=4$ is proved in \cite[Lemma 3.4]{CK} (the proof also works for $d=2$). The case $d \equiv 0\ \mathrm{mod}\, 4$ and $|\pi| = 1$ is part of the proof of Example \ref{verification of one pt for 4k}. For all other cases, we calculcated $\sum_i(-1)^i\Ext^i(I_Z,I_Z)_0$ explicitly by the vertex formalism of Section \ref{sect vertex formu} and observe that there are no negative $T$-constant terms.
\end{proof}

\subsection{Definitions}
We start with $d > 0$ arbitrary. Let $T\subset (\mathbb{C}^*)^d$ be the Calabi-Yau subtorus. 
For $[Z]\in \Hilb^n(X)^T$, we can form the complex vector bundle
\begin{equation}
\begin{array}{lll}
      & \quad ET\times_{T}\Ext^{i}(I_Z,I_Z)
      \\  &  \quad \quad\quad \quad \downarrow \\   &   \quad  ET\times_{T}\{I_Z\}=BT,
\end{array} \nonumber\end{equation}
whose Euler class is the $T$-equivariant Euler class $e_{T}\big(\Ext^{i}(I_Z,I_Z)\big)$. 
We collect all odd, (resp.~even, degree terms of $\Ext^{*}(I_Z,I_Z)$ and define 
\begin{equation}e_{T}\big(\Ext^{\mathrm{odd}}(I_Z,I_Z)\big):=\prod_{i=0}^{\infty} e_{T}\big(\Ext^{2i+1}(I_Z,I_Z)\big)\in H_{T}(\bullet), \nonumber \end{equation}
\begin{equation}e_{T}\big(\Ext^{\mathrm{even}}(I_Z,I_Z)\big):=\prod_{i=0}^{\infty} e_{T}\big(\Ext^{2i+2}(I_Z,I_Z)\big)\in H_{T}(\bullet). \nonumber \end{equation}
By Conjecture \ref{key conj}, the following quotient is well-defined
\begin{equation}\frac{e_{T}\big(\Ext^{\mathrm{even}}(I_Z,I_Z)\big)}{e_{T}\big(\Ext^{\mathrm{odd}}(I_Z,I_Z)\big)} 
\in \frac{\mathbb{Q}(\lambda_1, \cdots,\lambda_{d})}{(\lambda_1 + \cdots + \lambda_d)}.\nonumber \end{equation}

\subsection*{Odd dimensions}
\begin{defi}\label{def of equi invs for 2k-1}
Let $d \geqslant 3$ be an odd integer. Then $\chi(I_Z,I_Z)_0=0$ for all $[Z] \in \Hilb^n(\mathbb{C}^d)$, and we define
\begin{equation}Z_{\mathbb{C}^d}(q):=1+\sum_{n=1}^{\infty}\,\sum_{[Z]\in \Hilb^n(\mathbb{C}^d)^T}\frac{e_T\big(\Ext^{\mathrm{even}}(I_Z,I_Z)\big)}{e_T\big(\Ext^{\mathrm{odd}}(I_Z,I_Z)\big)}\cdot q^n \, \in \frac{\mathbb{Q}(\lambda_1, \cdots,\lambda_{d})}{(\lambda_1 + \cdots + \lambda_d)}[\![q]\!]. \nonumber \end{equation}
\end{defi}
\begin{exam}\label{eg 1}
Let $d > 1$ and let $I_x$ be the ideal sheaf of a closed point  $x\in (\mathbb{C}^d)^T$. Then the Koszul resolution of $x$ gives a $T$-equivariant isomorphism
\begin{equation}\Ext^i(I_x,I_x)\cong \bigwedge^iT_x\mathbb{C}^d, \quad \textrm{for all } 0< i< d. \nonumber \end{equation}
If $d$ is odd, then the coefficient of $q$ in $Z_{\mathbb{C}^d}(q)$ equals 
\begin{equation}
\prod_{i=1}^{\frac{d-1}{2}}
\frac{e_T\big(\bigwedge^{2i}T_x\mathbb{C}^d\big)}{e_T\big(\bigwedge^{2i-1}T_x\mathbb{C}^d\big)}=-1, 
\nonumber \end{equation}
where we use $T$-equivariant Serre duality (see Thm. \ref{thm for toric 2k-1} for the proof in a more general case).
\end{exam}

\subsection*{Dimensions multiple of four} Let $d = 4k >0$. Then $\chi(I_Z,I_Z)_0=2n$ for all $[Z]\in \Hilb^n(\mathbb{C}^d)$, so we need an insertion.
By Proposition \ref{ideal and str sheaf}, which allows us to use $T$-equivariant Serre duality for compactly supported sheaves, we have $T$-equivariant isomorphisms 
\begin{equation}\label{equ1}\Ext^{i}(I_Z,I_Z)\cong \Ext^{4k-i}(I_Z,I_Z)^* \textrm{ }\textrm{for}\textrm{ }0<i<4k,  \end{equation} 
and equalities
\begin{equation}e_{T}\big(\Ext^{i}(I_Z,I_Z)\big)=(-1)^{\ext^{i}(I_Z,I_Z)}\cdot e_{T}\big(\Ext^{4k-i}(I_Z,I_Z)\big). \nonumber \end{equation}
Consequently
\begin{equation}\frac{e_{T}\big(\Ext^{\mathrm{even}}(I_Z,I_Z)\big)}{e_{T}\big(\Ext^{\mathrm{odd}}(I_Z,I_Z)\big)} 
=(-1)^{n+\frac{1}{2}\ext^{2k}(I_Z,I_Z)}\Bigg(\frac{\prod_{i=1}^{k-1}e_{T}\big(\Ext^{2i}(I_Z,I_Z)\big)}{\prod_{i=1}^{k}e_{T}\big(\Ext^{2i-1}(I_Z,I_Z)\big)}\Bigg)^2\cdot\, e_{T}\big(\Ext^{2k}(I_Z,I_Z)\big). \nonumber \end{equation}

When $i=2k$, the isomorphism (\ref{equ1}) defines a $T$-invariant non-degenerate quadratic form on $\Ext^{2k}(I_Z,I_Z)$, hence induces a quadratic form $Q$ on $ET\times_{T}\Ext^{2k}(I_Z,I_Z)$. We define 
\begin{equation}\label{half euler class equivariant}e_{T}\big(\Ext^{2k}(I_Z,I_Z),Q\big)\in\mathbb{Z}[\lambda_1,\ldots,\lambda_{d}] / (\lambda_1+ \cdots + \lambda_d) \end{equation}
to be the \textit{half Euler class} of the quadratic bundle\,\footnote{I.e.~vector bundle with a non-degenerate quadratic form.} 
$(ET\times_{T}\Ext^{2k}(I_Z,I_Z),Q)$, i.e.~the Euler class of its positive real form.\,\footnote{I.e.~half rank real subbundle on which $Q$ is real and positive definite.} This notion is well-defined up to a sign corresponding to the choice of orientation of its positive real form. Moreover,  (see \cite[(2.4)]{CK})
\begin{equation}e_{T}\big(\Ext^{2k}(I_Z,I_Z),Q\big)^2 =(-1)^{\frac{1}{2}\ext^{2k}(I_Z,I_Z)}\cdot e_{T}\big(\Ext^{2k}(I_Z,I_Z)\big).
\nonumber \end{equation}
We obtain
\begin{equation}\frac{e_{T}\big(\Ext^{\mathrm{even}}(I_Z,I_Z)\big)}{e_{T}\big(\Ext^{\mathrm{odd}}(I_Z,I_Z)\big)} 
=(-1)^{n}\Bigg(\frac{\prod_{i=1}^{k-1}e_{T}\big(\Ext^{2i}(I_Z,I_Z)\big)}{\prod_{i=1}^{k}e_{T}\big(\Ext^{2i-1}(I_Z,I_Z)\big)}\cdot e_{T}\big(\Ext^{2k}(I_Z,I_Z),Q\big)\Bigg)^2,  \nonumber \end{equation}
or equivalently
\begin{equation}\label{equ2} (-1)^{o({\mathcal{L}})|_Z}  \sqrt{(-1)^{n}\frac{e_{T}\big(\Ext^{\mathrm{even}}(I_Z,I_Z)\big)}{e_{T}\big(\Ext^{\mathrm{odd}}(I_Z,I_Z)\big)} }
= \frac{\prod_{i=1}^{k-1}e_{T}\big(\Ext^{2i}(I_Z,I_Z)\big)}{\prod_{i=1}^{k}
e_{T}\big(\Ext^{2i-1}(I_Z,I_Z)\big)} \cdot e_{T}\big(\Ext^{2k}(I_Z,I_Z),Q\big).   \end{equation}
Some comments on this equation. Here $\sqrt{\alpha^2}$ is by definition equal to $\pm \alpha$, which is only determined up to sign. When we write $(-1)^{o({\mathcal{L}})|_Z} \sqrt{\alpha^2}$ it means we \emph{choose} a sign for the square root. In the above equation, the choice of sign $(-1)^{o({\mathcal{L}})|_Z}$ is equivalent to the choice of orientation used in defining $e_{T}\big(\Ext^{2k}(I_Z,I_Z),Q\big)$. 

\begin{defi}\label{def of equi invs for 4k}
Let $d=4k>0$
and let $L$ be a $T$-equivariant line bundle on $\mathbb{C}^d$.  Assume Conjecture \ref{key conj} holds. Define
\begin{equation}Z_{\mathbb{C}^d,L,o({\mathcal{L}})}(q):=1+\sum_{n=1}^{\infty}\,\sum_{[Z]\in \Hilb^n(\mathbb{C}^d)^T}(-1)^{o({\mathcal{L}})|_Z}
\sqrt{(-1)^{n}\cdot\frac{e_{T}\big(\Ext^{\mathrm{even}}(I_Z,I_Z)\big)}{e_{T}\big(\Ext^{\mathrm{odd}}(I_Z,I_Z)\big)}}\cdot
e_{T}(L^{[n]}|_Z)\cdot q^n, \nonumber \end{equation}
which is an element in $\mathbb{Q}(\lambda_1, \ldots,\lambda_d) / (\lambda_1+ \cdots +\lambda_d)[\![q]\!]$.
Here 
$o(\lL)|_Z$ denotes a choice of sign for each  square root (\ref{equ2}). For $d=4$, this recovers the definition of \cite{CK}.
\end{defi}
\begin{exam}\label{exam 2}
Consider the ideal sheaf $I_x$ of $x\in (\mathbb{C}^d)^T$, where $d=4k>0$. Referring to Examples \ref{eg 1}, we deduce that
the coefficient of $q$ in $Z_{\mathbb{C}^d,L,o({\mathcal{L}})}(q)$ equals 
\begin{equation}
(-1)^{o({\mathcal{L}})|_x}\sqrt{(-1)\cdot\frac{\prod_{i=1}^{2k-1}e_{T}\big(\bigwedge^{2i}T_x\mathbb{C}^d\big)}{\prod_{i=1}^{2k}e_{T}\big(\bigwedge^{2i-1}T_x\mathbb{C}^d\big)}}\cdot e_{T}(L|_x).  \nonumber \end{equation}
\end{exam}

\subsection{Conjecture}

Let $d\geqslant 3$ be an odd integer. Recall the definition of $M_d(q)$ from Section \ref{higher dim partition}.

\begin{rmk} \label{remfail1}
Suppose we had defined $Z_{\mathbb{C}^d}(q)$ exactly as in Definition \ref{def of equi invs for 2k-1} \emph{but} using the full torus $(\mathbb{C}^*)^d$ instead of the Calabi-Yau torus $T$. Then $Z_{\mathbb{C}^d}(q)$ is an element of
$$
\Q(\lambda_1, \ldots, \lambda_d)[[q]].
$$
In analogy to Maulik-Nekrasov-Okounkov-Pandharipande's work in three dimensions \cite[Conj.~1]{MNOP}, one may expect
\begin{equation*}
Z_{\mathbb{C}^d}(q) \stackrel{?}{=} M_{d-1}(-q)^E 
\end{equation*}
for some $E \in \Q(\lambda_1, \ldots, \lambda_d)$. However, after developing a vertex formalism as in MNOP in Section \ref{calc}, we find that such an equation already fails $\textrm{mod}\, q^3$ in dimensions 5 and 7.
\end{rmk}

Nevertheless, staying close to the original arguments of \cite{MNOP}, it is easy to obtain the following (where we go back to using the Calabi-Yau torus as in Definition \ref{def of equi invs for 2k-1}). The proof is given in Section \ref{pf of odd conj}.

\begin{thm}\label{thm for toric 2k-1}
For all $d \geqslant 3$ odd, we have
\begin{equation}Z_{\mathbb{C}^d}(q)=M_{d-1}(-q).  \nonumber \end{equation}
\end{thm}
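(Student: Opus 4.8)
The plan is to follow the strategy of Maulik--Nekrasov--Okounkov--Pandharipande \cite{MNOP}, adapting it to the Calabi-Yau torus $T$ in arbitrary odd dimension $d = 2k+1$. First I would set up the ``vertex'' (to be developed in Section \ref{calc}): for each $(d-1)$-partition $\pi$ and fixed point $[Z_\pi]$, express the class $\sum_i (-1)^i \Ext^i(I_{Z_\pi}, I_{Z_\pi})_0 \in K_T(\bullet)$ in closed form via the character \eqref{Zpi}. Using $\chi(I_Z, I_Z)_0 = 0$ in odd dimensions and the standard trick $\chi(I_Z, I_Z)_0 = \chi(\oO_X) - \chi(I_Z, I_Z)$ with the Koszul-type resolution, one obtains a rational expression in the $t_i$; restricting to $T$ means imposing $t_1 \cdots t_d = 1$, i.e. $\lambda_1 + \cdots + \lambda_d = 0$.

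The heart of the argument is a Serre-duality symmetry of the $T$-vertex. By Proposition \ref{ideal and str sheaf}, $\Ext^i(I_Z, I_Z) \cong \Ext^i(\oO_Z, \oO_Z)$ for $0 < i < d$ and $\Ext^d(I_Z, I_Z) = 0$; combined with $T$-equivariant Serre duality for the compactly supported sheaf $\oO_Z$ on the Calabi-Yau $\C^d$, one gets $\Ext^i(I_Z, I_Z) \cong \Ext^{d-i}(I_Z, I_Z)^*$ as $T$-representations. Since $d$ is odd, pairing $i$ with $d-i$ matches an even-degree group with an odd-degree group. I would therefore argue that for every nonzero $T$-weight $w$ appearing in $\sum_i (-1)^i \Ext^i(I_Z, I_Z)_0$, its Serre-dual $-w$ appears with opposite sign, so that in the quotient
\begin{equation*}
\frac{e_T(\Ext^{\mathrm{even}}(I_Z, I_Z))}{e_T(\Ext^{\mathrm{odd}}(I_Z, I_Z))} = \prod_{w} (\text{weight } w)^{\pm 1}
\end{equation*}
the contribution of $w$ and $-w$ is $w \cdot (-w)^{-1} = -1$ (each such pair flips the sign of the $T$-equivariant Euler class ratio by $-1$, using $c_1^T(\C \otimes t^{-1}\text{-weight}) = -c_1^T(\C \otimes t\text{-weight})$). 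Hence the ratio equals $(-1)^{(\text{number of weight pairs})}$, a sign independent of the $\lambda_i$; this is already illustrated in Example \ref{eg 1} for $|\pi| = 1$, where it gives $-1$. The combinatorial bookkeeping — counting the weight pairs and checking there are no $T$-fixed (weight $0$) terms other than those already cancelling, which is Proposition \ref{verify key conj} for odd $d$ — shows this sign is exactly $(-1)^{|\pi|}$.

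Granting that, we get
\begin{equation*}
Z_{\mathbb{C}^d}(q) = 1 + \sum_{n=1}^\infty \sum_{\pi : |\pi| = n} (-1)^{|\pi|} \, q^n = \sum_{\pi} (-q)^{|\pi|} = M_{d-1}(-q),
\end{equation*}
by the definition of $M_{d-1}(q)$ as the generating function of $(d-1)$-partitions. The main obstacle I anticipate is making the weight-pairing rigorous: one must verify that the Serre-duality involution $w \mapsto -w$ on the multiset of weights of $\sum_i (-1)^i \Ext^i(I_Z,I_Z)_0$ has no fixed points on the nonzero-weight part (Conjecture \ref{key conj} / Proposition \ref{verify key conj} in the odd case handles the sign-consistency of any weight-$0$ part), and that the parity count of free orbits is precisely $|\pi| \bmod 2$ — this is where the explicit vertex character \eqref{Zpi} and a careful MNOP-style cancellation argument are needed, rather than any hard geometry.
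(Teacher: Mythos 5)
Your proposal follows the same skeleton as the paper's proof: reduce the theorem to the claim that each $(d-1)$-partition $\pi$ contributes exactly $(-1)^{|\pi|}$, obtain that sign from the Serre-duality involution $w \mapsto -w$ on the nonzero $T$-weights of $\sum_i (-1)^i \Ext^i(I_{Z_\pi},I_{Z_\pi})_0$ (each orbit $\{w,-w\}$ contributing a factor $-1$ per unit of multiplicity), and then sum $\sum_\pi (-q)^{|\pi|} = M_{d-1}(-q)$. Up to that point your reasoning is correct, and the absence of $T$-fixed weights in odd dimension is indeed the easy Serre-duality cancellation of Proposition \ref{verify key conj}.

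The genuine gap is the step you phrase as ``the combinatorial bookkeeping \ldots shows this sign is exactly $(-1)^{|\pi|}$'' and then explicitly defer in your closing paragraph: identifying the parity of the number of weight pairs (counted with multiplicity) with $|\pi| \bmod 2$ \emph{is} the content of the proof, and nothing in your proposal establishes it. The paper's route, following MNOP, is to choose an explicit ``half'' of the vertex: with $\mathsf{V}_\pi$ as in \eqref{defV}, set $\mathsf{V}_\pi^+ := Z_\pi - Z_\pi\overline{Z}_\pi(1-t_1)\cdots(1-t_{2k})/(t_1\cdots t_{2k})$ and $\mathsf{V}_\pi^- := \mathsf{V}_\pi - \mathsf{V}_\pi^+$; on $t_1\cdots t_d = 1$ one checks $\overline{\mathsf{V}}_\pi^+ = -\mathsf{V}_\pi^-$, so $\mathsf{V}_\pi^+$ selects one representative from each dual pair, and since its second term vanishes at $t_1 = \cdots = t_d = 1$ one gets $\mathsf{V}_\pi^+(1,\ldots,1) = |\pi|$. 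Hence the sign is $(-1)^{|\pi| - r}$, where $r$ is the rank of the $T$-fixed part of $\mathsf{V}_\pi^+$ --- and unlike $\mathsf{V}_\pi$ itself, $\mathsf{V}_\pi^+$ \emph{can} have a nonzero fixed part. Proving $r$ is even is the inductive extremal-box analysis (contributions (i)--(iii), cases (a)--(c), the diagonal versus off-diagonal box) that occupies most of the paper's argument in Section \ref{pf of odd conj}. Without this, or some substitute computation of the parity, the claimed sign $(-1)^{|\pi|}$ remains unproven; note that the analogous naive expectation over the full torus fails already at order $q^2$ in dimensions $5$ and $7$ (Remark \ref{remfail1}), so the parity identification is genuinely where the Calabi-Yau specialization enters and cannot be waved through.
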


Next, let $d \geqslant 4$ such that $d \equiv 0\ \mathrm{mod}\, 4$.
\begin{rmk} \label{remfail2}
In \cite{CK} (see also \cite{Nekrasov} in a more general setting), we provide a conjectural formula for $Z_{\mathbb{C}^4,L,o({\mathcal{L}})}(q)$. Specifically, let $L  = \mathcal{O}_{\mathbb{C}^4} \otimes t_1^{d_1}t_2^{d_2}t_3^{d_3}t_4^{d_4}$, we conjectured (and provided evidence for) the following formula
$$
Z_{\mathbb{C}^4,L,o({\mathcal{L}})}(q) =  M(-q)^{\frac{\big(\sum_i d_i \lambda_i\big) \big(- \sum_{i<j<k} \lambda_i \lambda_j \lambda_k \big)}{\lambda_1\lambda_2\lambda_3\lambda_4}},
$$
where $\lambda_1 + \lambda_2 + \lambda_3 + \lambda_4 = 0$, $o(\mathcal{L})$ are appropriate choices of orientation, and $M(q) = M_2(q)$ is the MacMahon function. However, the direct analog of this conjecture is \emph{not} true in dimensions $d \geqslant 8$ satisfying $d \equiv 0\ \textrm{mod}\, 4$. Specifically, for dimensions $d = 8$, $12$, we checked that there are $T$-equivariant line bundles $L$ on $\mathbb{C}^d$ for which there do not exist orientations $o(\mathcal{L})$ such that
$$
Z_{\mathbb{C}^d,L,o({\mathcal{L}})}(q) =  M_{d-2}(-q)^{E},
$$
for any $E \in \Q(\lambda_1, \ldots, \lambda_d) / (\lambda_1 + \cdots + \lambda_d)$. The failure already appears at order $q^2$. Again, this follows from an explicit calculation using the vertex formalism of Section \ref{calc}.
\end{rmk}

As in odd dimensions, restricting the equivariant parameters further, we do get a formula.
\begin{conj}\label{conj for toric 4k}
Let $d \geqslant 4$ such that $d \equiv 0 \ \mathrm{mod}\, 4$ and let $L=\oO_{\mathbb{C}^d}\otimes t_d^{-\ell}$ where $ \ell \in\mathbb{Z}$. Then there exist choices of orientation $o(\lL)$ such that 
\begin{equation}\label{4k eq}Z_{\mathbb{C}^d,L,o({\mathcal{L}})}(q)\Big{|}_{\lambda_1+\cdots+\lambda_{d-1}=\lambda_d=0}=
M_{d-2}(-q)^{\ell}. \end{equation}
\end{conj}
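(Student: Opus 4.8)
The plan is to split the conjecture into a \emph{pointwise} statement about each torus‑fixed contribution after the specialization — equation \eqref{spec} of Conjecture \ref{conj for toric 4k intro} — together with a purely combinatorial generating‑function identity that is already available (Proposition \ref{comb proof}). First note that, in $H_T(\bullet)=\mathbb{Z}[\lambda_1,\dots,\lambda_d]/(\lambda_1+\cdots+\lambda_d)$, imposing $\lambda_1+\cdots+\lambda_{d-1}=\lambda_d=0$ is a \emph{single} extra linear relation (it forces $\lambda_d=0$ and conversely), so the specialization is a genuine degeneration of the equivariant parameters. The main task is to prove \eqref{spec}: that under this specialization the summand $\mathsf{W}_\pi$ of Definition \ref{def of equi invs for 4k} attached to a $(d-1)$‑partition $\pi$ collapses to $(-1)^{|\pi|}\,\omega_\pi\prod_{i=1}^{\pi_{1\ldots 1}}(\ell-(i-1))$ for a rational number $\omega_\pi$ \emph{independent of the $\lambda_i$ and of $\ell$}, whose overall sign is the choice of orientation; here $\prod_{i=1}^{\pi_{1\ldots1}}(\ell-(i-1))=\ell^{\underline{\pi_{1\ldots1}}}$ is a falling factorial. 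Granting \eqref{spec} together with the identification $\omega_\pi=\omega_\pi^c$ of Conjecture \ref{compare wpi} (for a globally consistent orientation), the conjecture follows formally: Proposition \ref{comb proof} gives $\sum_\pi\omega_\pi^c\,t^{\pi_{1\ldots1}}q^{|\pi|}=e^{t(M_{d-2}(q)-1)}$, and extracting the coefficient of $t^m$ and using $\ell^{\underline{m}}/m!=\binom{\ell}{m}$,
\[
\sum_\pi\omega_\pi^c\,\ell^{\underline{\pi_{1\ldots1}}}\,q^{|\pi|}=\sum_{m\geq 0}\ell^{\underline{m}}\,\frac{(M_{d-2}(q)-1)^m}{m!}=\sum_{m\geq0}\binom{\ell}{m}(M_{d-2}(q)-1)^m=M_{d-2}(q)^{\ell};
\]
after the substitution $q\mapsto-q$ and the identification $\omega_\pi=\omega_\pi^c$, the left‑hand side becomes $Z_{\mathbb{C}^d,L,o(\lL)}(q)\big|_{\lambda_1+\cdots+\lambda_{d-1}=\lambda_d=0}$ by \eqref{spec} and the right‑hand side becomes $M_{d-2}(-q)^{\ell}$, which is \eqref{4k eq}.

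\textbf{Proving \eqref{spec}.} I would compute $\mathsf{W}_\pi$ explicitly via the vertex formalism of Section \ref{calc}. One has $e_T(L^{[|\pi|]}|_{Z_\pi})=\prod_{\square}\big(c_1^T(L)+c_1^T(\square)\big)$, a product over the boxes $\square$ of $\pi$ with $c_1^T(L)=-\ell\lambda_d$, and $e_T(\Ext^{\mathrm{even}})/e_T(\Ext^{\mathrm{odd}})=e_T(\chi_0)$ with $\chi_0:=\sum_i(-1)^i\Ext^i(I_{Z_\pi},I_{Z_\pi})_0$ given by the universal Laurent‑polynomial vertex in the character $Q_\pi$ of \eqref{Zpi} (this requires Conjecture \ref{key conj}, which I would establish in the relevant range as part of the same bookkeeping). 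The mechanism is a \emph{dimensional reduction}: writing $\mathbb{C}^d=\mathbb{C}^{d-1}\times\mathbb{C}$ with the last factor the $x_d$‑direction, the specialization makes the $x_d$‑direction non‑equivariant, and the weights that become $T$‑degenerate are exactly those along the main diagonal $\{i_1=\cdots=i_{d-1}\}$, where $c_1^T$ is proportional to $\lambda_d$. The crux is to track \emph{simultaneously} the poles that the square‑root factor acquires from the $T$‑degenerate weights of $\chi_0$ and the zeros that $e_T(L^{[|\pi|]})$ acquires from the diagonal boxes, show they cancel in order, and extract the finite limit; the $\ell$‑dependence comes precisely from the origin column $\{i_1=\cdots=i_{d-1}=1\}$, whose box $(1,\dots,1,m)$ contributes the factor $(m-1-\ell)\lambda_d$ to $e_T(L^{[|\pi|]})$, producing the falling factorial, while the other diagonal columns and all off‑diagonal boxes should combine with $\chi_0$ into the $\lambda$‑ and $\ell$‑independent rational number $\omega_\pi$. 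Concretely I would use Proposition \ref{ideal and str sheaf} and the adjunction/dimensional‑reduction triangle of Proposition \ref{compare ext} (with the divisor $\{x_d=0\}$) to reduce the $\Ext$‑groups of $\mathbb{C}^d$ at $\lambda_d=0$ to data on $\mathbb{C}^{d-1}$, whose vertex is controlled by the computation behind Theorem \ref{thm for toric 2k-1}, followed by a degree‑ and weight‑multiplicity count to see that the residual expression is homogeneous of degree $0$, pole‑free, and — by a rigidity argument in the surviving parameters in the spirit of \cite{MNOP} — actually constant.

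\textbf{The combinatorial identification and the orientation.} The second ingredient is $\omega_\pi=\omega_\pi^c$ (Conjecture \ref{compare wpi}) together with the existence of a globally consistent orientation realizing it (Conjecture \ref{sign unique intro}). I would argue that $\omega_\pi$ factors as a product over the components of the shadow decomposition of $\pi$ underlying Definition \ref{combinatoric wpi}, each factor being a $\pm1$ fixed by the orientation of a positive real form of the relevant middle $\Ext^{2k}$‑quadratic space; matching these local signs against the recursive definition of $\omega_\pi^c$ gives the equality, and choosing all local orientations positive yields both $\omega_\pi>0$ and the global orientation. A useful consistency check — and the case $\ell=1$, which is already known — is that for $L=\oO_{\mathbb{C}^d}\otimes t_d^{-1}=\oO(\{x_d=0\})$ the tautological section of $L^{[n]}$ of Proposition \ref{section s} cuts out $\Hilb^n(\mathbb{C}^{d-1})$, so the count should reduce to $Z_{\mathbb{C}^{d-1}}(q)=M_{d-2}(-q)$ by Theorem \ref{thm for toric 2k-1}, with $\ell$ copies then appearing plethystically for general $\ell$ via the exponential formula above.

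\textbf{Main obstacle.} The hard part is the pointwise factorization \eqref{spec} for arbitrary $\pi$ and arbitrary $d\equiv 0\bmod 4$: controlling the square root of the half‑Euler class of the middle group $\Ext^{2k}(I_Z,I_Z)$ under the degenerate specialization, showing that every would‑be pole (from the $T$‑degenerate weights, equivalently the diagonal boxes) cancels against a zero, that the surviving number is a \emph{rational} number independent of all the $\lambda_i$, and that its sign is orientation‑definite. This is exactly where the presently available evidence stops — Theorem \ref{checks} and Proposition \ref{verify key conj} cover only small $|\pi|$ or small $d$ — and the combinatorial identification $\omega_\pi=\omega_\pi^c$ is a logically independent difficulty of the same order.
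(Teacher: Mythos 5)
The statement you set out to prove is stated in the paper as an open conjecture: the paper itself establishes only special cases (modulo $q^2$, the case $\ell=1$, and computer verifications for $d=8$ modulo $q^7$ and $d=12$ modulo $q^5$), so there is no complete proof in the paper to compare against, and your proposal does not supply one either. Its two essential inputs are precisely other open conjectures of the same paper: the pointwise specialization identity \eqref{spec} --- including well-definedness of the square root (Conjecture~\ref{key conj}), the cancellation of the $0/0$ indeterminacies coming from diagonal boxes, the claim that the residue is a $\lambda$- and $\ell$-independent rational number $\omega_\pi$, and the orientation-definiteness of its sign --- is exactly Conjecture~\ref{specconj} together with Conjecture~\ref{sign unique}; and the identification $\omega_\pi=\omega_\pi^c$ is Conjecture~\ref{compare wpi}. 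Your section ``Proving \eqref{spec}'' is a plan rather than an argument: the adjunction/dimensional-reduction triangle of Proposition~\ref{compare ext} applies only to subschemes scheme-theoretically contained in $\{x_d=0\}$, which is how the paper handles $\ell=1$ in Proposition~\ref{thm for equiv sm div}, but for a general $(d-1)$-partition with $\pi_{1\cdots1}>1$ you give no mechanism for the asserted pole/zero cancellation, nor for the ``rigidity'' claim that the surviving expression is constant in the remaining equivariant parameters --- exactly the point where the paper's evidence stops being conceptual and becomes a Maple computation. The same holds for the claimed factorization of $\omega_\pi$ over a ``shadow decomposition'' matching the recursive structure of $\omega_\pi^c$: nothing of the sort is established, and it is not in the paper.

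What is correct, and worth keeping, is the formal reduction: granting \eqref{spec} with a consistent choice of orientation and $\omega_\pi=\omega_\pi^c>0$, your falling-factorial/binomial manipulation combined with the combinatorially proven Proposition~\ref{comb proof} does yield \eqref{4k eq}; this is essentially the paper's Theorem~\ref{4k-1 partition counting} (and Theorem~2.19 of \cite{CK}) run in the opposite direction, and your $\ell=1$ consistency check reproduces Proposition~\ref{thm for equiv sm div}. But since the conjunction of Conjectures~\ref{specconj}, \ref{sign unique} and \ref{compare wpi} implies, and is at least as hard as, Conjecture~\ref{conj for toric 4k} itself, your proposal reformulates the conjecture rather than proving it; the genuine gap is the pointwise identity \eqref{spec} and the identification $\omega_\pi=\omega_\pi^c$, for which no argument beyond the paper's low-degree checks is provided.
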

For $d=4$, this was conjectured in \cite{CK} and verified $\mathrm{mod}\, q^7$. In this paper, we provide the following additional evidence:
\begin{thm}\label{main thm on 4k dim}
Conjecture \ref{conj for toric 4k} is true in the following settings:
\begin{itemize}
\item Modulo $q^2$ (Example \ref{verification of one pt for 4k}).
\item When $\ell=1$ (Proposition \ref{thm for equiv sm div}).
\item Modulo $q^7$ when $d=8$ (Proposition \ref{more verify of conj 4k}).
\item Modulo $q^5$ when $d=12$ (Proposition \ref{more verify of conj 4k}).
\end{itemize}
\end{thm}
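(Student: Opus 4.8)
The four assertions are proved separately (as Example~\ref{verification of one pt for 4k}, Proposition~\ref{thm for equiv sm div}, Proposition~\ref{more verify of conj 4k}), by rather different methods.

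\emph{Validity modulo $q^2$.} Both sides of \eqref{4k eq} have constant term $1$, so one only needs to match coefficients of $q$. The coefficient of $q$ in $M_{d-2}(q)$ is $1$, hence the coefficient of $q$ in $M_{d-2}(-q)^{\ell}$ is $-\ell$. On the left, $\Hilb^1(\mathbb{C}^d)^T$ consists of the single reduced point at the origin $x\in(\mathbb{C}^d)^T$, so the coefficient of $q$ is the single contribution computed in Example~\ref{exam 2}. I would evaluate it using $e_T\big(\bigwedge^{j}T_x\mathbb{C}^d\big)=(-1)^{\binom{d}{j}}\,e_T\big(\bigwedge^{d-j}T_x\mathbb{C}^d\big)$ in $H_T(\bullet)$, which holds because $\lambda_1+\cdots+\lambda_d=0$: pairing Serre-dual exterior powers and applying the half-Euler class identity \eqref{equ2} shows the contribution equals, up to the orientation sign,
$$
\frac{\prod_{i=1}^{k-1}e_T\big(\bigwedge^{2i}T_x\mathbb{C}^d\big)}{\prod_{i=1}^{k}e_T\big(\bigwedge^{2i-1}T_x\mathbb{C}^d\big)}\cdot e_T\big(\bigwedge^{2k}T_x\mathbb{C}^d,Q\big)\cdot e_T(L|_x),
$$
where $e_T(L|_x)=-\ell\lambda_d$. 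Restricting to $\lambda_d=0$, the factor $\bigwedge^{1}T_x\mathbb{C}^d=T_x\mathbb{C}^d$ in the denominator produces a simple pole in $\lambda_d$ cancelling the simple zero of $e_T(L|_x)$ (no other factor vanishes at $\lambda_d=0$), and the result is the constant $\pm\ell$; choosing the orientation gives $-\ell$. Since no $T$-constant term occurs here, this simultaneously verifies Conjecture~\ref{key conj} for $|\pi|=1$.

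\emph{The case $\ell=1$.} Here $L=\oO_{\mathbb{C}^d}\otimes t_d^{-1}$ carries the $T$-equivariant structure of $\oO(D)$, where $D=\{x_d=0\}\cong\mathbb{C}^{d-1}$ is a smooth $T$-invariant divisor; note that $\mathbb{C}^{d-1}$ is odd-dimensional and that the specialization $\lambda_d=0$ is precisely restriction from $T$ to the Calabi--Yau subtorus $T'\subset T$ of $D$. The plan is to compare the specialized series with $Z_{\mathbb{C}^{d-1}}(q)$, which equals $M_{d-2}(-q)$ by Theorem~\ref{thm for toric 2k-1}, fixed point by fixed point. A $T$-fixed point $[Z_\pi]$ lies in $D$ exactly when $\pi_{1\ldots1}=1$, and these are in bijection with the $T'$-fixed points of $\Hilb^n(D)$. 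If $\pi_{1\ldots1}\geqslant 2$, then $L^{[n]}|_{Z_\pi}=Z_\pi\otimes t_d^{-1}$ contains the trivial character (from the box $(1,\ldots,1,2)$), so $e_T(L^{[n]}|_{Z_\pi})=0$ and this contribution vanishes. For $\pi_{1\ldots1}=1$, corresponding to $[Z_\sigma]\in\Hilb^n(D)^{T'}$, I would use Proposition~\ref{ideal and str sheaf} to replace $\Ext^{i}_X(I_{Z_\pi},I_{Z_\pi})$ by $\Ext^{i}_X(\iota_*\oO_{Z_\pi},\iota_*\oO_{Z_\pi})$ for $0<i<d$, then Proposition~\ref{compare ext} to express the alternating sum of the latter, as a $T$-representation, through the groups $\Ext^{i}_D(\oO_{Z_\sigma},\oO_{Z_\sigma})$ and their Serre duals, and substitute into \eqref{equ2}. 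Because $D$ is odd-dimensional, Serre duality on $D$ pairs the even part of $\Ext^{*}_D$ with the odd part, so the $D$-factors cancel up to sign; after the poles of the square root in $\lambda_d$ cancel against the zeros of $e_T(L^{[n]}|_{Z_\pi})$, the contribution of $Z_\pi$ reduces, for a suitable orientation, to the contribution of $Z_\sigma$ in the odd-dimensional series. Summing over all $\pi$ gives $Z_{\mathbb{C}^{d-1}}(q)=M_{d-2}(-q)=M_{d-2}(-q)^1$.

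\emph{The cases $d=8$ $(\mathrm{mod}\ q^7)$ and $d=12$ $(\mathrm{mod}\ q^5)$, and the main obstacle.} These are explicit computations. Using the vertex formalism of Section~\ref{calc}, which outputs $\sum_i(-1)^i\Ext^{i}(I_{Z_\pi},I_{Z_\pi})_0\in K_T(\bullet)$ as a Laurent polynomial in $t_1,\ldots,t_d$ for each monomial ideal $I_{Z_\pi}$, one enumerates all $(d-1)$-partitions $\pi$ of size $\leqslant 6$ for $d=8$ (resp.\ $\leqslant 4$ for $d=12$), forms the sign-corrected square root of \eqref{equ2} times $e_T(L^{[n]}|_{Z_\pi})$, specializes $\lambda_1+\cdots+\lambda_{d-1}=\lambda_d=0$, sums over $\pi$ of each size, and checks agreement with the coefficients of $M_{d-2}(-q)^{\ell}$; along the way no negative $T$-constant term appears, which confirms Conjecture~\ref{key conj} in these cases. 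The two conceptual cases are routine in spirit but delicate in execution: the real care goes into tracking simultaneously the half-Euler class $e_T(\Ext^{2k}(I_Z,I_Z),Q)$, the orientation sign, and the cancellation of poles against zeros in $\lambda_d$ under the specialization --- in the $\ell=1$ case this amounts to matching the orientation on $\Ext^{2k}_X$ with the $D$-data produced by Proposition~\ref{compare ext}. For $d=8,12$ the only obstacle is scale: the number of higher-dimensional partitions and the size of the equivariant $\Ext$-representations grow quickly, and that is what caps the attainable order in $q$.
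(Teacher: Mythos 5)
Your overall route coincides with the paper's: the four bullets are proved separately, the mod $q^2$ case by direct evaluation of the one-point contribution of Example \ref{exam 2}, the $\ell=1$ case by the same chain used in Proposition \ref{thm for equiv sm div} (vanishing of $e_T(L^{[n]}|_{Z_\pi})$ when $\pi_{1\cdots1}\geqslant 2$, then Proposition \ref{ideal and str sheaf} and Proposition \ref{compare ext} to reduce the contribution of $Z_\pi\subset D$ to the even/odd ratio on $D=\mathbb{C}^{d-1}$, the tautological insertion cancelling against $\Ext^{d-1}_D(\oO_Z,\oO_Z)$, and finally Theorem \ref{thm for toric 2k-1} on $D$), and the $d=8,12$ cases by machine computation with the vertex formalism, exactly as in Proposition \ref{more verify of conj 4k}. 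Those three parts are fine as sketches of the paper's arguments.

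The gap is in the mod $q^2$ bullet. You assert that after the simple pole coming from the factor $e_T(T_0\mathbb{C}^d)$ in the denominator cancels the simple zero of $e_T(L|_0)=-\ell\lambda_d$, ``the result is the constant $\pm\ell$.'' That is precisely the nontrivial content of Example \ref{verification of one pt for 4k} and it does not follow from ``no other factor vanishes'': what remains after the cancellation is a degree-zero ratio of products of linear forms $\lambda_{j_1}+\cdots+\lambda_{j_m}$ in $\lambda_1,\ldots,\lambda_{d-1}$ subject only to $\lambda_1+\cdots+\lambda_{d-1}=0$, and a degree-zero rational function of this kind need not be constant. The paper proves it equals $\pm1$ by using, at $\lambda_d=0$, the factorization
\begin{equation*}
\prod_{j_1<\cdots<j_m}(\lambda_{j_1}+\cdots+\lambda_{j_m})=\prod_{j_1<\cdots<j_{m-1}<d}(\lambda_{j_1}+\cdots+\lambda_{j_{m-1}})\cdot\prod_{j_1<\cdots<j_m<d}(\lambda_{j_1}+\cdots+\lambda_{j_m}),
\end{equation*}
which telescopes the alternating product of exterior powers down to a single factor $\prod_{|S|=2k-1,\,d\notin S}$, then uses $\lambda_1+\cdots+\lambda_{d-1}=0$ to identify complementary subsets of $\{1,\ldots,d-1\}$ inside the square root, and the parity identity $-\tfrac12\binom{4k}{2k}+\binom{4k-1}{2k}=0$ to control the resulting sign; only then does the contribution collapse to $(-1)^{o(\lL)|_0}(-\ell)$. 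Without this (or an equivalent) computation the first bullet is unproved. Note you could also close the gap cheaply from what you already have: for a fixed orientation the $q$-coefficient is linear in $\ell$, since only $e_T(L|_0)=-\ell\lambda_d$ depends on $\ell$, so your $\ell=1$ case forces the specialized value to be $-\ell$ for all $\ell$; that shortcut is not the paper's argument but is legitimate, provided you keep your observation that only the single factor $\lambda_d$ degenerates under the specialization (which also gives Conjecture \ref{key conj} for $|\pi|=1$).
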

\begin{exam}\label{verification of one pt for 4k}
(Conjecture \ref{conj for toric 4k} is true modulo $q^2$) Let $d = 4k>0$. According to Example \ref{exam 2}, the coefficient of $q$ equals to
\begin{equation}
(-1)^{o({\mathcal{L}})|_0} e_{T}(L|_0)\cdot
\sqrt{(-1)\cdot\frac{\prod_{i=1}^{2k-1}e_{T}\big(\bigwedge^{2i}T_0\mathbb{C}^d\big)}{\prod_{i=1}^{2k}e_{T}\big(\bigwedge^{2i-1}T_0\mathbb{C}^d\big)}},  \nonumber \end{equation}
where $L=\oO_{\mathbb{C}^d}\otimes t_d^{-\ell}$. Using $\lambda_1+\cdots+\lambda_{d}=0$, this becomes
\begin{equation}(-\ell \lambda_d)\cdot\frac{\prod_{i=1}^{k-1}\prod_{j_1<\cdots<j_{2i}}(\lambda_{j_1}+\cdots+\lambda_{j_{2i}})}
{\prod_{i=1}^{k} \prod_{j_1<\cdots<j_{2i-1}}(\lambda_{j_1}+\cdots+\lambda_{j_{2i-1}})}\cdot
\sqrt{(-1)^{\sigma}\cdot\prod_{j_1<\cdots<j_{2k}}(\lambda_{j_1}+\cdots+\lambda_{j_{2k}})}, \nonumber \end{equation}
where 
$$
\sigma:= 1+ \sum_{i=1}^{k-1} \binom{4k}{2i} + \sum_{i=1}^{k} \binom{4k}{2i-1} = 2^{4k-1} - \frac{1}{2}\binom{4k}{2k} \equiv - \frac{1}{2}\binom{4k}{2k} \ \mathrm{mod}\, 2.
$$
Since $\lambda_d=0$, we have 
\begin{equation}\prod_{j_1<\cdots<j_m}(\lambda_{j_1}+\cdots+\lambda_{j_m})=
\prod_{j_1<\cdots<j_{m-1}<d}(\lambda_{j_1}+\cdots+\lambda_{j_{m-1}})
\cdot\prod_{j_1<\cdots<j_{m}<d}(\lambda_{j_1}+\cdots+\lambda_{j_{m}}).  \nonumber \end{equation}
Using this, we obtain
\begin{align*}
&(-1)^{o({\mathcal{L}})|_0}(-\ell)\cdot\frac{\sqrt{(-1)^{-\frac{1}{2}\binom{4k}{2k}}\cdot\prod_{j_1<\cdots<j_{2k}}(\lambda_{j_1}+\cdots+\lambda_{j_{2k}})}}{\prod_{j_1<\cdots<j_{2k-1}<d}(\lambda_{j_1}+\cdots+\lambda_{j_{2k-1}})} \\
&=(-1)^{o({\mathcal{L}})|_0}(-\ell)\cdot\frac{\sqrt{(-1)^{-\frac{1}{2}\binom{4k}{2k}}\cdot\prod_{j_1<\cdots<j_{2k}<d}(\lambda_{j_1}+\cdots+\lambda_{j_{2k}})
\cdot\prod_{i_1<\cdots<i_{2k-1}<d}(\lambda_{i_1}+\cdots+\lambda_{i_{2k-1}})}}{\prod_{j_1<\cdots<j_{2k-1}<d}(\lambda_{j_1}+\cdots+\lambda_{j_{2k-1}})}  \\
&=(-1)^{o({\mathcal{L}})|_0}(-\ell)\cdot\frac{\sqrt{(-1)^{-\frac{1}{2}\binom{4k}{2k}+\binom{4k-1}{2k}}} \prod_{i_1<\cdots<i_{2k-1}<d}(\lambda_{i_1}+\cdots+\lambda_{i_{2k-1}})}{\prod_{j_1<\cdots<j_{2k-1}<d}(\lambda_{j_1}+\cdots+\lambda_{j_{2k-1}})} \\
&=(-1)^{o({\mathcal{L}})|_0}(-\ell), \end{align*}
where the second equality uses $\lambda_1+\cdots+\lambda_{d-1}=0$ and the third equality uses 
$$
-\frac{1}{2}\binom{4k}{2k}+\binom{4k-1}{2k} =0.
$$
We conclude that for the choice $(-1)^{o({\mathcal{L}})|_0}=1$, Conjecture \ref{conj for toric 4k} holds modulo $q^2$.
\end{exam}

\subsection{Conjecture \ref{conj for toric 4k} for smooth divisors}

\begin{prop}\label{thm for equiv sm div}
Conjecture \ref{conj for toric 4k} is true for $\ell=1$, i.e.~smooth toric divisors.
\end{prop}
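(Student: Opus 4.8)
The plan is to reduce the left-hand side of \eqref{4k eq}, after the specialization, to $Z_{\mathbb{C}^{d-1}}(q)$, which equals $M_{d-2}(-q)$ by Theorem \ref{thm for toric 2k-1} (note $d-1=4k-1\geqslant3$). Since $\ell=1$, $L=\oO_{\mathbb{C}^d}(D)$ for the smooth $(\mathbb{C}^*)^d$-invariant divisor $D=\{x_d=0\}\cong\mathbb{C}^{d-1}$, which is odd-dimensional; by Proposition \ref{section s}, $L^{[n]}$ carries a tautological section with zero locus $\Hilb^n(D)$. The fixed points $Z_\pi\in\Hilb^n(\mathbb{C}^d)^T$ (Lemma \ref{T fixed pts are iso}) are of two kinds: those with $Z_\pi\not\subset D$, i.e.\ $\pi_{1\ldots1}\geqslant2$; and those with $Z_\pi\subset D$, namely $Z_\pi=\iota_*\oO_W$ for $\iota\colon D\hookrightarrow\mathbb{C}^d$ and $W=W_\mu$ with $\mu$ a $(d-2)$-partition. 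The latter are in bijection with $\Hilb^n(D)^{T'}$, and $(d-2)$-partitions are counted by $M_{d-2}(q)$. So it suffices to prove, for a suitable choice of orientation, that: (a) the summand at each $Z_\pi\not\subset D$ vanishes; and (b) the summand at each $Z_\pi=\iota_*\oO_W\subset D$ restricts under $\lambda_1+\cdots+\lambda_{d-1}=\lambda_d=0$ to the summand $e_{T'}(\Ext^{\mathrm{even}}_D(\oO_W,\oO_W))/e_{T'}(\Ext^{\mathrm{odd}}_D(\oO_W,\oO_W))$ of $Z_{\mathbb{C}^{d-1}}(q)$ at $W$ (using Proposition \ref{ideal and str sheaf} on the odd-dimensional $D$ to rewrite these in terms of $\Ext^*(\oO_W,\oO_W)$).

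Part (a) is immediate: by \eqref{Zpi}, $L^{[n]}|_{Z_\pi}=t_d^{-1}\cdot Z_\pi$ in $K_T(\bullet)$, and $\pi_{1\ldots1}\geqslant2$ forces the trivial character to occur in $L^{[n]}|_{Z_\pi}$, so $e_T(L^{[n]}|_{Z_\pi})=0$; since by \eqref{equ2} the remaining factor lies in $\mathbb{Q}(\lambda_1,\ldots,\lambda_d)/(\lambda_1+\cdots+\lambda_d)$, the summand is zero, hence so is its restriction.

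For (b), fix $Z=\iota_*\oO_W$ and set $E^i:=\Ext^i_D(\oO_W,\oO_W)$, $F^i:=\Ext^i_X(\oO_Z,\oO_Z)$, identifying $\Ext^i_X(I_Z,I_Z)\cong F^i$ for $0<i<d$ by Proposition \ref{ideal and str sheaf}; by \eqref{equ2} the summand is
\[
\frac{\prod_{i=1}^{k-1}e_T(F^{2i})}{\prod_{i=1}^{k}e_T(F^{2i-1})}\cdot e_T(F^{2k},Q)\cdot e_T(L^{[n]}|_Z).
\]
I would feed in the distinguished triangle from the proof of Proposition \ref{compare ext},
\[
\RHom_D(\oO_W,\oO_W)\to\RHom_X(\iota_*\oO_W,\iota_*\oO_W)\to\RHom_D(\oO_W,\oO_W\otimes K_D)[-1],
\]
where $K_D\cong\oO_D\otimes t_d^{-1}$ $T$-equivariantly by Calabi--Yau adjunction ($N_{D/X}=\oO_D(D)\cong L|_D=\oO_D\otimes t_d^{-1}$). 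The long exact sequence reads $\cdots\to E^i\to F^i\to E^{i-1}\otimes t_d^{-1}\xrightarrow{\ \delta_i\ }E^{i+1}\to\cdots$ with $\delta_0=0$ (its source $E^{-1}$ vanishes). Also $E^0=\Hom_D(\oO_W,\oO_W)=H^0(\oO_W)$, so $e_T(L^{[n]}|_Z)=e_T(E^0\otimes t_d^{-1})$. Taking Euler classes of $0\to\Cok\delta_{i-1}\to F^i\to\Ker\delta_i\to0$ gives $e_T(F^i)=e_T(E^i)\,e_T(E^{i-1}\otimes t_d^{-1})/(e_T(\im\delta_{i-1})\,e_T(\im\delta_i))$, and since the image of $\RHom_D(\oO_W,\oO_W)$ is a $T$-invariant Lagrangian in the self-dual $(F^{2k},Q)$, $e_T(F^{2k},Q)=\pm\,e_T(\Cok\delta_{2k-1})=\pm\,e_T(E^{2k})/e_T(\im\delta_{2k-1})$. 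Substituting into the display, the $\im\delta_i$ contributions telescope: only $\im\delta_0$ (trivial, as $\delta_0=0$) and $\im\delta_{2k-1}$ survive, and the latter cancels against the $e_T(\im\delta_{2k-1})^{-1}$ in $e_T(F^{2k},Q)$; the factor $e_T(E^0\otimes t_d^{-1})$ cancels one more term; and rewriting $e_T(E^i\otimes t_d^{-1})=(-1)^{\dim E^{d-1-i}}e_T(E^{d-1-i})$ via $T$-equivariant Serre duality on $D$ ($E^i\otimes t_d^{-1}\cong(E^{d-1-i})^*$), the summand collapses to
\[
\pm\,\frac{e_T(E^2)\,e_T(E^4)\cdots e_T(E^{d-2})}{e_T(E^1)\,e_T(E^3)\cdots e_T(E^{d-3})}=\pm\,\frac{e_T(\Ext^{\mathrm{even}}_D(\oO_W,\oO_W))}{e_T(\Ext^{\mathrm{odd}}_D(\oO_W,\oO_W))}.
\]
The $E^i$ have weights only in $\lambda_1,\ldots,\lambda_{d-1}$, so restricting to $\lambda_1+\cdots+\lambda_{d-1}=\lambda_d=0$ is precisely passage to the Calabi--Yau torus $T'$ of $(\mathbb{C}^*)^{d-1}$; choosing $(-1)^{o(\mathcal{L})|_Z}$ to absorb the parameter-independent sign gives (b). Summing over $n$ and over all $(d-2)$-partitions and invoking Theorem \ref{thm for toric 2k-1} yields \eqref{4k eq}.

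The main obstacle is the Lagrangian claim — that the image of $\RHom_D(\oO_W,\oO_W)$ inside the self-dual $(F^{2k},Q)$ is maximal isotropic for the Serre-duality quadratic form — which is exactly what makes the half Euler class reduce to $e_T(\Cok\delta_{2k-1})$; together with pinning down the various parameter-independent signs (so that one global choice of orientation on each $\Hilb^n(\mathbb{C}^d)$ works simultaneously at all fixed points), this is the delicate part, and is handled as in the $d=4$ case of \cite{CK}. A minor point is checking the telescoping — that no $\im\delta_i$ beyond $\im\delta_0,\im\delta_{2k-1}$ survives the alternating product — which is a routine indexing verification.
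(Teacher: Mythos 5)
Your overall strategy is the same as the paper's: kill the fixed points with $Z\not\subset D$ via the vanishing $e_T(L^{[n]}|_Z)=0$ (this is exactly Lemma \ref{vanishing}), compare the $X$-Ext groups with the $D$-Ext groups through the distinguished triangle coming from $\mathbf{L}\iota^*\iota_*$ together with the $T$-equivariant adjunction $\oO_D(D)\cong K_D$ and Serre duality on $D$, identify the tautological insertion with $H^0(\oO_W)\otimes t_d^{-1}$, and then reduce to Theorem \ref{thm for toric 2k-1} applied to $D\cong\mathbb{C}^{d-1}$ under the specialization, which is precisely restriction to the Calabi--Yau torus of $D$. Your bookkeeping (the telescoping of the $\im\delta_i$ factors and the Serre-duality rewriting $e_T(E^i\otimes t_d^{-1})=\pm e_T(E^{d-1-i})$) does reproduce the paper's displayed identity.

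The one place you genuinely deviate is also the one gap: you evaluate the half Euler class by asserting that the image of $\RHom_D(\oO_W,\oO_W)$ in $(\Ext^{2k}_X(\iota_*\oO_W,\iota_*\oO_W),Q)$ is a $T$-invariant Lagrangian, so that $e_T(F^{2k},Q)=\pm\,e_T(\Cok\delta_{2k-1})$, and you defer this to ``the $d=4$ case of \cite{CK}'', which does not cover general $d\equiv 0\ \mathrm{mod}\,4$; as written this claim carries the whole argument and is not proved. The paper shows it is also unnecessary: instead of touching the half Euler class, it uses the K-theoretic identity of Proposition \ref{compare ext}, which exhibits $(-1)^n\,e_T(\Ext^{\mathrm{even}}_X)/e_T(\Ext^{\mathrm{odd}}_X)$ as a perfect square of the explicit quantity $e_T(\Ext^{4k-1}_D(\oO_Z,\oO_Z))^{-1}\prod_i e_T(\Ext^{2i}_D)/e_T(\Ext^{2i-1}_D)$; since Conjecture \ref{conj for toric 4k} only asserts the \emph{existence} of orientations, the sign of the square root is simply absorbed into $(-1)^{o(\lL)|_Z}$ at each fixed point, and no identification of $e_T(\Ext^{2k},Q)$ with a cokernel is needed. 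Working with the K-theory class rather than term-by-term with the long exact sequence also avoids a second (minor) issue in your manipulation, namely that individual factors $e_T(\im\delta_i)$ could a priori vanish if an image contains the trivial weight; the paper instead only needs that the final ratio on $D$ is well defined, which it notes follows from Proposition \ref{verify key conj} because $d-1$ is odd. So: same reduction, but replace your Lagrangian step by the squaring argument via Proposition \ref{compare ext} (or prove the isotropy claim in general) to close the proof.
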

\begin{proof}
When $\ell=1$, $L=\oO\otimes (t_d)^{-1}$ corresponds to the smooth divisor $D:=\{x_d=0\}\subset X:=\mathbb{C}^d$. The projection $(t_1, \ldots, t_d) \mapsto (t_1, \ldots, t_{d-1})$ induces an algebraic group isomorphism $T \cong (\mathbb{C}^*)^{d-1}$ allowing us to identify the $T$-action on $D$ with the standard action of $(\mathbb{C}^*)^{d-1}$ on $D$.
For $[Z]\in\Hilb^n(\mathbb{C}^d)^T$ such that $Z\not\subset D$, i.e.~$Z$ is not scheme theoretically contained in $D$, Lemma \ref{vanishing} below implies
\begin{equation}\label{vani} e_{T}(L^{[n]}|_Z)=0. \end{equation}
Hence we only need to calculate 
\begin{equation}\label{equ sum}\sqrt{(-1)^{n}\cdot\frac{e_{T}\big(\Ext_X^{\mathrm{even}}(I_{Z,X},I_{Z,X})\big)}{e_{T}\big(\Ext_X^{\mathrm{odd}}(I_{Z,X},I_{Z,X})\big)}}\cdot e_{T}(L^{[n]}|_Z), \nonumber \end{equation}
for $[Z]\in\Hilb^n(X)^T$ satisfying $Z\subset D\subset X$. Here $I_{Z,X}$ denotes the ideal sheaf of $Z \subset X$ and below we will use $I_{Z,D}$ for the ideal sheaf of $Z$ in $D$. By Proposition \ref{compare ext}, we have
\begin{eqnarray*} \frac{\prod_{i=1}^{2k-1}e_T\big(\Ext^{2i}_X(\iota_*\oO_Z,\iota_*\oO_Z)\big)}{\prod_{i=1}^{2k}e_T\big(\Ext^{2i-1}_X(\iota_*\oO_Z,\iota_*\oO_Z)\big)}=
(-1)^{n} \Bigg(  e_T\big(\Ext^{4k-1}_D(\oO_Z,\oO_Z)\big)^{-1} \prod_{i=1}^{2k-1}\frac{e_T\big(\Ext^{2i}_D(\oO_Z,\oO_Z)\big)}{e_T\big(\Ext^{2i-1}_D(\oO_Z,\oO_Z)\big)} \Bigg)^2 \\
= (-1)^{n}\bigg( e_T\big(H^0(X,\iota_*\oO_Z\otimes \oO_X(D))\big)^{-1} \prod_{i=1}^{2k-1}\frac{e_T\big(\Ext^{2i}_D(\oO_Z,\oO_Z)\big)}{e_T\big(\Ext^{2i-1}_D(\oO_Z,\oO_Z)\big)} \bigg)^2,
\end{eqnarray*}
where $\iota : D \hookrightarrow X$ denotes inclusion and we use 
\begin{equation}\Ext_D^{4k-1}(\oO_Z,\oO_Z)^{*}\cong H^0(D,\oO_Z\otimes K_D)\cong H^0(X,\iota_*\oO_Z\otimes \oO_X(D)).\nonumber \end{equation}
Combining with Proposition \ref{ideal and str sheaf}, we obtain  
\begin{equation}\label{equ sum}\sqrt{(-1)^{n}\cdot\frac{e_{T}\big(\Ext_X^{\mathrm{even}}(I_{Z,X},I_{Z,X})\big)}{e_{T}\big(\Ext_X^{\mathrm{odd}}(I_{Z,X},I_{Z,X})\big)}}\cdot e_{T}(L^{[n]}|_Z)=\pm\frac{e_{T}\big(\Ext_D^{\mathrm{even}}(I_{Z,D},I_{Z,D})\big)}{e_{T}\big(\Ext_D^{\mathrm{odd}}(I_{Z,D},I_{Z,D})\big)}, \nonumber \end{equation}
up to a sign corresponding to the choice of orientation. Note that we are not dividing by zero by Proposition \ref{verify key conj} (because $d-1$ is odd).

On $D$, the specialization $\lambda_1+\cdots+\lambda_{d-1}=\lambda_d=0$ corresponds to restriction of $T \cong \mathbb{C}^{d-1}$ to
\begin{equation}T_0:=\Big\{(t_1,\ldots,t_{d-1},1)\,\big{|}\,t_1\cdots t_{d-1}=1\Big\}, \nonumber \end{equation}
which is isomorphic to the Calabi-Yau subtorus of $D$.
Choosing the right orientations, the result follows from Theorem \ref{thm for toric 2k-1} applied to $D = \mathbb{C}^{d-1}$.
\end{proof}

We are left to prove (\ref{vani}). 
\begin{lem}\label{vanishing}
Let $d>0$ and $L=\oO_{\mathbb{C}^d}(D)$ with $D:=\{x_d=0\}\subset \mathbb{C}^d$. Then there is a $(\mathbb{C}^*)^d$-equivariant isomorphism $L^{[n]} \cong \oO^{[n]} \otimes t_d^{-1}$. Moreover, for any $T$-fixed point $[Z]\in \Hilb^n(\mathbb{C}^d)^T$ such that $Z$ does not scheme-theoretically lie in $D$, we have 
\begin{equation}e_{T}(L^{[n]}|_{Z}) = 0.  \nonumber \end{equation}
\end{lem}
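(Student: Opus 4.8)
For the equivariant identification of $L^{[n]}$, the plan is to work with the ideal sheaf of $D$. Since $D=\{x_d=0\}$, the sheaf $\oO_{\mathbb{C}^d}(-D)=I_D$ is the principal ideal $(x_d)\subset\oO_{\mathbb{C}^d}$, generated freely over $\oO_{\mathbb{C}^d}$ by the coordinate function $x_d$. As $x_d$ spans the $1$-dimensional $(\mathbb{C}^*)^d$-representation of character $t_d$ (in the conventions of \eqref{Zpi}), multiplication by $x_d$ is a $(\mathbb{C}^*)^d$-equivariant isomorphism $\oO_{\mathbb{C}^d}\otimes t_d\xrightarrow{\ \sim\ }\oO_{\mathbb{C}^d}(-D)$, so that $L=\oO_{\mathbb{C}^d}(D)\cong\oO_{\mathbb{C}^d}\otimes t_d^{-1}$ equivariantly. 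Applying $\pi_{M*}\big(\oO_{\mathcal{Z}_n}\otimes\pi_X^*(-)\big)$ together with the projection formula (the character $t_d^{-1}$ being pulled back from the base point), I obtain $L^{[n]}=\pi_{M*}\big(\oO_{\mathcal{Z}_n}\otimes\pi_X^*L\big)\cong\pi_{M*}(\oO_{\mathcal{Z}_n})\otimes t_d^{-1}=\oO^{[n]}\otimes t_d^{-1}$.

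For the vanishing statement, the plan is to exhibit a nonzero $T$-fixed vector in the fibre $L^{[n]}|_Z$, which forces the equivariant Euler class to be zero. By Proposition \ref{section s}, the bundle $L^{[n]}$ carries a tautological section $s^{[n]}$ whose scheme-theoretic zero locus is $\Hilb^n(D)\subset\Hilb^n(\mathbb{C}^d)$; under the identification of the previous paragraph this section is induced by multiplication by $x_d$, which is $(\mathbb{C}^*)^d$-equivariant of trivial character (as $D$ is torus-invariant), so $s^{[n]}$ is $(\mathbb{C}^*)^d$-equivariant. Now suppose $Z$ is not scheme-theoretically contained in $D$. Then $[Z]\notin\Hilb^n(D)$, so $s^{[n]}$ does not vanish at $[Z]$; and since $s^{[n]}$ is $(\mathbb{C}^*)^d$-equivariant and $[Z]$ is a $(\mathbb{C}^*)^d$-fixed point (Lemma \ref{T fixed pts are iso}), the value $s^{[n]}|_Z$ is a nonzero $(\mathbb{C}^*)^d$-fixed (hence $T$-fixed) vector of $L^{[n]}|_Z$. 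Therefore $L^{[n]}|_Z$ contains the trivial $T$-representation as a direct summand, and the $T$-equivariant Euler class of any representation with a trivial summand vanishes; hence $e_T(L^{[n]}|_Z)=0$.

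As an independent check, and an alternative route that avoids Proposition \ref{section s}, one can argue with characters: by Lemma \ref{T fixed pts are iso} the ideal $I_Z$ is monomial, so $\oO_Z$ has $\mathbb{C}$-basis the monomials $x^a\notin I_Z$, and by the first paragraph $L^{[n]}|_Z=H^0(Z,L|_Z)$ has $(\mathbb{C}^*)^d$-character $t_d^{-1}\sum_{x^a\notin I_Z}t^a$; the hypothesis $Z\not\subset D$ is exactly $x_d\notin I_Z$, in which case the monomial $x_d$ contributes the term $t_d^{-1}t_d=1$, a trivial summand, and again $e_T(L^{[n]}|_Z)=0$. The only genuine subtlety in either approach is bookkeeping of the equivariant structures --- fixing the direction of the twist by $t_d^{\pm1}$, and checking that $s^{[n]}$ is honestly an equivariant section rather than merely one whose zero locus is torus-invariant --- and once this is pinned down there is no substantive difficulty; this is the main point to be careful about, but it is not a serious obstacle.
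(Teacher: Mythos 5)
Your proposal is correct. The first paragraph (identifying $\oO(-D)=(x_d)\cong\oO\otimes t_d$, hence $L\cong\oO\otimes t_d^{-1}$ and $L^{[n]}\cong\oO^{[n]}\otimes t_d^{-1}$ by pushing forward along the universal family) is exactly the paper's argument. For the vanishing, your \emph{primary} route differs from the paper's: you invoke Proposition \ref{section s} and the equivariance of the tautological section $s^{[n]}$, so that at the fixed point $[Z]\notin\Hilb^n(D)$ its nonzero value is a $T$-invariant vector, forcing a trivial summand in $L^{[n]}|_Z$ and hence $e_T(L^{[n]}|_Z)=0$; this is sound, and the one point you rightly flag — that $s_D$ corresponds to $x_d\otimes t_d^{-1}$, of trivial total weight, so the induced section of $L^{[n]}$ is honestly equivariant — is handled correctly. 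The paper instead argues purely with characters: $Z\not\subset D$ means $x_d\notin I_Z$ (equivalently $\pi_{1\cdots1}>1$), so the $K$-theory class of $\oO_Z$ contains the term $t_d$, and $L^{[n]}|_Z=Z\otimes t_d^{-1}$ therefore contains the trivial weight $1$, whose Euler class factor is zero; this is precisely your ``independent check,'' so your second argument coincides with the paper's proof. The section-theoretic route is slightly more conceptual (it explains the vanishing as the geometric statement that $[Z]$ lies off the zero locus $\Hilb^n(D)$, and would generalize to non-monomial situations), at the cost of the equivariance bookkeeping, while the character computation is shorter and is all the paper needs at a monomial fixed point.
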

\begin{proof}
Consider the ideal sheaf $\oO(-D) \subset \oO$. This corresponds to the inclusion
\begin{equation}(x_d) \subset \mathbb{C}[x_1,\ldots,x_d].  \nonumber \end{equation}
Hence $\oO(-D) \cong \oO \otimes t_{d}$ and $L \cong \oO \otimes t_{d}^{-1}$. The fibres of $L^{[n]}$ are equal to
\begin{equation}L^{[n]}|_{Z} \cong H^0(L|_Z) \cong H^0(\oO_Z) \otimes t_{d}^{-1},  \nonumber \end{equation}
where all isomorphisms are $(\mathbb{C}^*)^d$-equivariant isomorphisms. 
We obtain a $(\mathbb{C}^*)^d$-equivariant isomorphism
\begin{equation}L^{[n]} \cong \oO^{[n]} \otimes t_d^{-1}.  \nonumber \end{equation}

Suppose $[Z] \in \Hilb^n(\mathbb{C}^d)$ is a $T$-fixed element (also $(\mathbb{C}^*)^d$-fixed by Lemma \ref{T fixed pts are iso}). Then $Z$ corresponds to a $(d-1)$-partition 
$\pi =\{\pi_{i_1\ldots i_{d-1}}\}_{i_1,\ldots, i_{d-1} \geqslant 1}$. If $Z\not\subset D$, i.e.~$Z$ is not scheme-theoretically contained in $D$, then $(x_d) \not\subset I_Z$. Therefore, $\pi_{1\cdots1} > 1$ and the class of $Z$ in $K_{(\mathbb{C}^*)^d}(\bullet)$ contains the term $t_d$. Consequently
\begin{equation*}
e_{(\mathbb{C}^*)^d}(L^{[n]}|_{Z}) = e_{(\mathbb{C}^*)^d}(Z \otimes t_{d}^{-1}) = e_{(\mathbb{C}^*)^d}(1 + \mathrm{other \ terms}) = 
e_{(\mathbb{C}^*)^d}(1)\cdot e_{(\mathbb{C}^*)^d}(\mathrm{other \ terms}) = 0. 
\end{equation*}
The same vanishing holds when setting $\lambda_1 + \cdots + \lambda_d = 0$.
\end{proof}

\section{Vertex formalism and calculations} \label{calc}

\subsection{Vertex formalism}\label{sect vertex formu}
In order to do explicit calculations, and prove Theorem \ref{thm for toric 2k-1}, we develop a vertex formalism in all dimensions closely following the original setup of MNOP \cite{MNOP} (see also \cite[Sect. 3.3]{CK} for the case of toric Calabi-Yau 4-folds).

Let $d \geqslant 2$ and let $Z\subset \mathbb{C}^d$ be a $T$-invariant  zero-dimensional subscheme. Then $Z$ is $(\mathbb{C}^*)^d$-invariant by Lemma \ref{T fixed pts are iso}.
By Section \ref{higher dim partition}, $Z=Z_{\pi}$ for some $(d-1)$-partition $\pi =\{\pi_{i_1\ldots i_{d-1}}\}_{i_1,\ldots, i_{d-1} \geqslant 1}$ and
\begin{equation}\label{corr between part and subscheme}
Z_\pi = \sum_{i_1,\ldots, i_{d-1} \geqslant 1} \sum_{m=1}^{\pi_{i_1\ldots i_{d-1}}} t_1^{i_1-1}\cdots t_{d-1}^{i_{d-1}-1} t_d^{m-1},
\end{equation}
where the sum is over all $i_1,\ldots,i_{d-1} \geqslant1$ for which $\pi_{i_1i_2\ldots i_{d-1}}\geqslant 1$. 

Denote the global section functor by $\Gamma(-)$. Then the local-to-global spectral sequence and \v{C}ech calculation of sheaf cohomology yields
$$
-\dR\mathrm{Hom}(I_Z,I_Z)_0 = \sum_{i} (-1)^i \Big(\Gamma(\mathbb{C}^d, \oO_{\mathbb{C}^d})-\Gamma(\mathbb{C}^d, 
\mathcal{E}{\it{xt}}^i(I_Z,I_Z)) \Big)\in K_{(\mathbb{C}^*)^d}(\bullet),
$$
where we use $H^{>0}(\mathbb{C}^d,-) = 0$. 

Let $R:=\Gamma(\oO_{\mathbb{C}^d}) \cong \mathbb{C}[x_1,\ldots,x_d]$. Consider the class $[I_Z]$ in the equivariant $K$-group 
$K_{(\mathbb{C}^*)^d}(\mathbb{C}^d)$. By identifying $[R]$ with $1$, we obtain a ring isomorphism
$$
K_{(\mathbb{C}^*)^d}(\mathbb{C}^d) \cong \mathbb{Z}[t_1^{\pm},\ldots,t_d^{\pm}].
$$
The Laurent polynomial $\mathsf{P}(I_Z)$ corresponding to $[I_Z]$ under this isomorphism is called the Poincar\'e polynomial of $I_Z$.
For any $w=(w_1,\ldots,w_d) \in \mathbb{Z}^d$, we use multi-index notation
$$
t^w:= t_1^{w_1} \cdots t_d^{w_d}.
$$
Then $[R \otimes t^w] \in K_{(\mathbb{C}^*)^d}(\mathbb{C}^d)$ corresponds to $t^w \in \mathbb{Z}[t_1^{\pm},\ldots,t_d^{\pm}]$.
 
Define an involution $\overline{(\cdot)}$ on $K_{(\mathbb{C}^*)^d}(\mathbb{C}^d)$ by $\mathbb{Z}$-linear extension of
$$
\overline{t^w} := t^{-w}.
$$ 
By definition, the trace map
$$
\tr : K_{(\mathbb{C}^*)^d}(\mathbb{C}^d) \rightarrow \mathbb{Z}(\!(t_1,\ldots,t_d)\!)
$$
corresponds to $(\mathbb{C}^*)^d$-equivariant restriction to the fixed point of $\mathbb{C}^d$. 

Take a $(\mathbb{C}^*)^d$-equivariant graded free resolution
$$
0 \rightarrow F_s \rightarrow \cdots \rightarrow F_0 \rightarrow I \rightarrow 0,
$$
as in \cite{MNOP}, where 
$$
F_i = \bigoplus_j R \otimes t^{d_{ij}},
$$
for certain $d_{ij} \in \mathbb{Z}^d$. Then
\begin{equation} \label{Poin}
\mathsf{P}(I) = \sum_{i,j} (-1)^i t^{d_{ij}}.
\end{equation}
Furthermore, the $(\mathbb{C}^*)^d$-character of $\oO_Z$ is given by \eqref{Zpi}
\begin{equation}\label{Z} 
Z=\sum_{i_1,\ldots, i_{d-1} \geqslant 1} \sum_{m=1}^{\pi_{i_1\ldots i_{d-1}}} t_1^{i_1-1} \cdots t_{d-1}^{i_{d-1}-1} t_d^{m-1}
=\oO_U-I=\frac{1-\mathsf{P}(I)}{(1-t_1)\cdots(1-t_d)}.
\end{equation}
We deduce
\begin{align*}
\dR\mathrm{Hom}(I_Z,I_Z) &= \sum_{i,j,k,l} (-1)^{i+k} \mathrm{Hom}(R \otimes t^{d_{ij}},R \otimes t^{d_{kl}}) \\
&= \sum_{i,j,k,l} (-1)^{i+k} R \otimes t^{d_{kl} - d_{ij}} \\
&= \frac{\mathsf{P}(I)\overline{\mathsf{P}(I)} }{(1-t_1)\ldots(1-t_d)},
\end{align*}
where we used \eqref{Poin} for the last equality. 

Eliminating $\mathsf{P}(I)$ by using \eqref{Z},
$-\dR\mathrm{Hom}(I_Z,I_Z)_0$ is given by
\begin{equation} \label{defV}
\mathsf{V}_\pi := Z_\pi +(-1)^{d}\frac{\overline{Z}_{\pi}}{t_1\cdots t_d} -(-1)^{d}\frac{Z_\pi \overline{Z}_\pi(1-t_1)\cdots(1-t_d)}{t_1\cdots t_d},
\end{equation}
where we re-introduced the subindex $\pi$. Summing up, we have proved the following lemma:
\begin{lem} \label{Vlem}
Let $Z_\pi\subset \mathbb{C}^d$ be a $T$-fixed zero-dimensional subscheme which corresponds to a $(d-1)$-partition $\pi$ via (\ref{corr between part and subscheme}). Then
\begin{equation*} \label{vertexeq}
-\dR\mathrm{Hom}(I_{Z_\pi},I_{Z_\pi})_0=\mathsf{V}_\pi,
\end{equation*}
where the equivariant vertex $\mathsf{V}_\pi$ is defined by \eqref{defV}.
\end{lem}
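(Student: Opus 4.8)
The plan is to compute the $(\mathbb{C}^*)^d$-equivariant $K$-theory class of $-\dR\mathrm{Hom}(I_{Z_\pi},I_{Z_\pi})_0$ directly from a free resolution, following the original MNOP argument. First I would fix a $(\mathbb{C}^*)^d$-equivariant graded free resolution
\[
0 \to F_s \to \cdots \to F_0 \to I \to 0, \qquad F_i = \bigoplus_j R \otimes t^{d_{ij}},
\]
of the ideal $I := I_{Z_\pi}$ in $R = \mathbb{C}[x_1,\ldots,x_d]$; such a finite resolution exists because $R$ is regular (Hilbert's syzygy theorem) and can be chosen equivariant, e.g.\ the minimal one. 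This yields the Poincar\'e polynomial $\mathsf{P}(I) = \sum_{i,j}(-1)^i t^{d_{ij}}$ representing $[I]$ under $K_{(\mathbb{C}^*)^d}(\mathbb{C}^d) \cong \mathbb{Z}[t_1^{\pm},\ldots,t_d^{\pm}]$ (with $[R]\mapsto 1$), and, from $0 \to I_Z \to \oO \to \oO_Z \to 0$ together with $[\oO_Z] = Z_\pi$, the relation $\mathsf{P}(I) = 1 - Z_\pi \prod_{i=1}^d(1-t_i)$.

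Next I would compute $\dR\mathrm{Hom}(I_Z,I_Z)$ in $K$-theory. Since the $F_i$ are projective, $\dR\mathrm{Hom}_R(I,I)$ is represented by the total complex of $\mathrm{Hom}_R(F_\bullet,F_\bullet)$; as $\mathrm{Hom}_R(R\otimes t^a, R\otimes t^b) = R\otimes t^{b-a}$ with vanishing higher $\mathrm{Ext}$, biadditivity of the class on $[F_\bullet]$ gives
\[
[\dR\mathrm{Hom}(I_Z,I_Z)] = \sum_{i,j,k,l}(-1)^{i+k}\, t^{d_{kl}-d_{ij}} = \mathsf{P}(I)\,\overline{\mathsf{P}(I)},
\]
where $\overline{(\cdot)}$ is the involution $t^w \mapsto t^{-w}$; the twist by $t^{-a}$ in the first $\mathrm{Hom}$-argument is exactly what turns one factor into $\overline{\mathsf{P}(I)}$. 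Because $H^{>0}(\mathbb{C}^d,-)=0$, the module-level computation agrees with the global $\mathrm{Ext}$, so applying the trace (restriction to the torus-fixed point), which sends $[R]$ to $1/\prod_{i=1}^d(1-t_i)$, identifies the character of $\dR\mathrm{Hom}(I_Z,I_Z)$ with $\mathsf{P}(I)\overline{\mathsf{P}(I)}/\prod_{i=1}^d(1-t_i)$.

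Finally I would subtract $\dR\mathrm{Hom}(\oO_{\mathbb{C}^d},\oO_{\mathbb{C}^d})$, whose character is $1/\prod_{i=1}^d(1-t_i)$, to obtain $\dR\mathrm{Hom}(I_Z,I_Z)_0$, then substitute $\mathsf{P}(I) = 1 - Z_\pi\prod_{i=1}^d(1-t_i)$ and simplify using $\prod_{i=1}^d(1-t_i^{-1}) = (-1)^d \prod_{i=1}^d(1-t_i)/(t_1\cdots t_d)$; a short expansion gives
\[
-\dR\mathrm{Hom}(I_Z,I_Z)_0 = Z_\pi + (-1)^d\frac{\overline{Z}_\pi}{t_1\cdots t_d} - (-1)^d\frac{Z_\pi\overline{Z}_\pi \prod_{i=1}^d(1-t_i)}{t_1\cdots t_d} = \mathsf{V}_\pi.
\]
The algebra here is routine, so I do not anticipate a genuine obstacle; the only care needed is bookkeeping. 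One must work in a ring where the intermediate rational expressions are meaningful (e.g.\ $\mathbb{Z}((t_1,\ldots,t_d))$ for the trace), then verify that the $1/\prod_{i=1}^d(1-t_i)$ poles cancel once the trace part is removed — which they must, since $Z_\pi$ is zero-dimensional and compactly supported, so each $\mathrm{Ext}^i(I_Z,I_Z)$ is a finite-dimensional representation and $\mathsf{V}_\pi$ is an honest Laurent polynomial — and one must keep the dualization convention on the first $\mathrm{Hom}$-argument straight, which is the only subtle point and is the source of $\overline{\mathsf{P}(I)}$ rather than a second copy of $\mathsf{P}(I)$.
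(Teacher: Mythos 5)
Your proposal is correct and follows essentially the same route as the paper: an equivariant free resolution of $I_{Z_\pi}$ gives the Poincar\'e polynomial $\mathsf{P}(I)$, the character identity $Z_\pi=(1-\mathsf{P}(I))/\prod_i(1-t_i)$, the computation $[\dR\mathrm{Hom}(I_Z,I_Z)]=\mathsf{P}(I)\overline{\mathsf{P}(I)}/\prod_i(1-t_i)$, and elimination of $\mathsf{P}(I)$ after removing the trace part. The algebraic simplification you outline (using $\prod_i(1-t_i^{-1})=(-1)^d\prod_i(1-t_i)/(t_1\cdots t_d)$) indeed reproduces \eqref{defV}, so nothing is missing.
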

Now let $d\geqslant 4$ satisfying $d \equiv 0\ \mathrm{mod}\,4$ and make the specialization $t_1\cdots t_d=1$. Then
\begin{align*}
\mathsf{V}_\pi =&\,\sum_{i=1}^{2k}\mathrm{Ext}^{2i-1}(I_{Z_\pi},I_{Z_\pi})
-\sum_{i=1}^{2k-1}\mathrm{Ext}^{2i}(I_{Z_\pi},I_{Z_\pi}) \\
=&\,\sum_{i=1}^{k}\mathrm{Ext}^{2i-1}(I_{Z_\pi},I_{Z_\pi})+\sum_{i=1}^{k}\mathrm{Ext}^{2i-1}(I_{Z_\pi},I_{Z_\pi})^*-
\mathrm{Ext}^{2k}(I_{Z_\pi},I_{Z_\pi}) \\
&\,-\sum_{i=1}^{k-1}\mathrm{Ext}^{2i}(I_{Z_\pi},I_{Z_\pi})-\sum_{i=1}^{k-1}\mathrm{Ext}^{2i}(I_{Z_\pi},I_{Z_\pi})^*,  
\end{align*}
where $\mathrm{Ext}^{2k}(I_{Z_\pi},I_{Z_\pi})$ is self-dual.\,\footnote{Despite being on a non-compact space, $T$-equivariant Serre duality holds by Proposition \ref{ideal and str sheaf}.} Assume Conjecture \ref{key conj} holds for $d$. Then $e_{T}(-\mathsf{V}_\pi)$ is well-defined and given by 
\begin{equation}e_{T}(-\mathsf{V}_\pi)=(-1)^{\sum_{i=1}^{2k-1}(-1)^{i}\mathrm{ext}^{i}(I_{Z_\pi},I_{Z_\pi})}\cdot\Bigg(\frac{\prod_{i=1}^{k-1}e_{T}\big(\mathrm{Ext}^{2i}(I_{Z_\pi},I_{Z_\pi})\big)}
{\prod_{i=1}^{k}e_{T}\big(\mathrm{Ext}^{2i-1}(I_{Z_\pi},I_{Z_\pi})\big)}\Bigg)^2 e_{T}(\mathrm{Ext}^{2k}(I_{Z_\pi},I_{Z_\pi})).
\nonumber \end{equation}
Since the Serre duality pairing on $\mathrm{Ext}^{2k}(I_{Z_\pi},I_{Z_\pi})$ is preserved by $T$, 
there exists a half Euler class $e_{T}\big(\mathrm{Ext}^{2k}(I_{Z_\pi},I_{Z_\pi}),Q\big)$ as in (\ref{half euler class equivariant}) satisfying 
\begin{equation}e_{T}\big(\mathrm{Ext}^{2k}(I_{Z_\pi},I_{Z_\pi}),Q\big)^2=(-1)^{\frac{1}{2}
\mathrm{ext}^{2k}(I_{Z_\pi},I_{Z_\pi})}\cdot e_{T}\big(\mathrm{Ext}^{2k}(I_{Z_\pi},I_{Z_\pi})\big). 
\nonumber \end{equation}
Again using Proposition \ref{ideal and str sheaf} and Serre duality on a compactification of $\mathbb{C}^d$, a Hirzebruch-Riemann-Roch calculation shows
$$
\sum_{i=1}^{2k-1}(-1)^i\mathrm{ext}^{i}(I_{Z_\pi},I_{Z_\pi}) = -n-\frac{\mathrm{ext}^{2k}(I_{Z_\pi},I_{Z_\pi}) }{2},
$$
where $n = |\pi|$ denotes the length of $Z_\pi$. We conclude
\begin{equation*}\label{equi square equality}e_{T}(-\mathsf{V}_\pi)=(-1)^{|\pi|}\cdot
\Bigg(e_{T}(\mathrm{Ext}^{2k}\big(I_{Z_\pi},I_{Z_\pi}),Q\big) \cdot \frac{\prod_{i=1}^{k-1}e_{T}\big(\mathrm{Ext}^{2i}(I_{Z_\pi},I_{Z_\pi})\big)}
{\prod_{i=1}^{k}e_{T}\big(\mathrm{Ext}^{2i-1}(I_{Z_\pi},I_{Z_\pi})\big)}\Bigg)^2. \end{equation*}
\begin{defi} \label{wpi sq root}
Assume Conjecture \ref{key conj} holds for $d$ (where $d\geqslant 4$, $d \equiv 0\ \mathrm{mod}\, 4$). Let $\pi$ be a $(d-1)$-partition and let
$\mathsf{V}_\pi$ be the expression defined by \eqref{defV}. We define
$$
\mathsf{w}_\pi:=\sqrt{(-1)^{|\pi|}\cdot e_{T}(-\mathsf{V}_\pi)}
\in \mathbb{Q}(\lambda_1,\ldots,\lambda_d)/(\lambda_1+\cdots+\lambda_d),
$$
which is only defined up to a sign, i.e.~$\sqrt{\alpha^2}$ stands for $\pm \alpha$. 
\end{defi}

From Lemma \ref{Vlem} and Definition \ref{wpi sq root}, we conclude the following.
\begin{prop}
Assume Conjecture \ref{key conj} holds for $d$ (where $d\geqslant 4$, $d \equiv 0\ \mathrm{mod}\, 4$). Let $\pi$ be a $(d-1)$-partition and 
$Z_\pi$ be the corresponding zero-dimensional subscheme. Then we have 
$$
\sqrt{(-1)^{|\pi|}\cdot\frac{e_{T}\big(\Ext^{\mathrm{even}}(I_{Z_\pi},I_{Z_\pi})\big)}{e_{T}\big(\Ext^{\mathrm{odd}}(I_{Z_\pi},I_{Z_\pi})\big)}}=\mathsf{w}_{\pi},
$$
where left-hand side and right-hand side are only defined up to sign.
\end{prop}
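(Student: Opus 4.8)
The plan is to deduce the identity directly from Lemma~\ref{Vlem}, using only the standard bookkeeping of equivariant Euler classes; there is essentially nothing deeper going on, so the work is to unwind both sides carefully. First I would rewrite $-\mathsf{V}_\pi$ as an honest alternating sum of $\Ext$-groups in $K_T(\bullet)$. By Lemma~\ref{Vlem},
$$-\mathsf{V}_\pi \;=\; \dR\mathrm{Hom}(I_{Z_\pi},I_{Z_\pi})_0 \;=\; \sum_{i}(-1)^i\,\Ext^i(I_{Z_\pi},I_{Z_\pi})_0,$$
the class of a complex being the alternating sum of its cohomologies. I would then simplify the right-hand side using Proposition~\ref{ideal and str sheaf} and the stability of $I_{Z_\pi}$: the $i=0$ term is $\Hom(I_{Z_\pi},I_{Z_\pi})_0=0$, the $i=d$ term is $\Ext^d(I_{Z_\pi},I_{Z_\pi})=0$, and for $0<i<d$ the trace-free group coincides with the full $\Ext^i$. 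Hence, in $K_T(\bullet)$,
$$-\mathsf{V}_\pi \;=\; \Ext^{\mathrm{even}}(I_{Z_\pi},I_{Z_\pi})\;-\;\Ext^{\mathrm{odd}}(I_{Z_\pi},I_{Z_\pi}),$$
where the even and odd sums run over degrees $\geqslant 2$ and $\geqslant 1$ respectively, which is exactly the convention fixed in the definitions of $e_T(\Ext^{\mathrm{even}})$ and $e_T(\Ext^{\mathrm{odd}})$.

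Next I would invoke multiplicativity of the $T$-equivariant Euler class: writing a virtual $T$-representation as a $\mathbb{Z}$-linear combination $\sum_w a_w\, t^w$ of characters, its Euler class is $\prod_w \big(\sum_i w_i \lambda_i\big)^{a_w}$ read modulo $\lambda_1+\cdots+\lambda_d$, so that $e_T(A-B)=e_T(A)/e_T(B)$ whenever the quotient makes sense. By the displayed identity, Conjecture~\ref{key conj} — which is a standing assumption here, per Definition~\ref{wpi sq root} — says precisely that $-\mathsf{V}_\pi$ has no negative $T$-constant term, so that after cancellation no trivial weight survives in the denominator. Therefore
$$e_T(-\mathsf{V}_\pi)\;=\;\frac{e_T\big(\Ext^{\mathrm{even}}(I_{Z_\pi},I_{Z_\pi})\big)}{e_T\big(\Ext^{\mathrm{odd}}(I_{Z_\pi},I_{Z_\pi})\big)}$$
is a well-defined element of $\mathbb{Q}(\lambda_1,\ldots,\lambda_d)/(\lambda_1+\cdots+\lambda_d)$.

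Finally, I would multiply both sides by $(-1)^{|\pi|}$ and extract a square root — well-defined only up to a global sign on either side, which is exactly the ambiguity recorded in the statement — so that the left-hand side of the Proposition becomes $\sqrt{(-1)^{|\pi|}\,e_T(-\mathsf{V}_\pi)}$, and this is $\mathsf{w}_\pi$ by Definition~\ref{wpi sq root}. I do not expect a genuine obstacle: the argument is purely formal. The one place that requires attention is the $K$-theoretic bookkeeping in the first step — verifying that the degree-$0$ and degree-$d$ contributions vanish and that the even/odd truncations are aligned with the product conventions — together with the remark that the "division" of Euler classes is legitimate precisely because of the hypothesis of Conjecture~\ref{key conj}.
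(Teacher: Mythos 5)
Your argument is correct and is essentially the paper's own: the Proposition is stated there as an immediate consequence of Lemma~\ref{Vlem} and Definition~\ref{wpi sq root}, i.e.\ precisely the identification $-\mathsf{V}_\pi=\Ext^{\mathrm{even}}(I_{Z_\pi},I_{Z_\pi})-\Ext^{\mathrm{odd}}(I_{Z_\pi},I_{Z_\pi})$ in $K_T(\bullet)$ (using $\Hom(I_{Z_\pi},I_{Z_\pi})_0=0$, $\Ext^d=0$, and $\Ext^i_0\cong\Ext^i$ for $0<i<d$ from Proposition~\ref{ideal and str sheaf}), multiplicativity of $e_T$ with well-definedness supplied by Conjecture~\ref{key conj}, and then taking square roots up to sign. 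The only cosmetic point is that the vanishing of the degree-zero trace-free part on the non-compact $\mathbb{C}^d$ is better attributed to $\mathcal{H}om(I_{Z_\pi},I_{Z_\pi})\cong\oO_{\mathbb{C}^d}$ (as built into the vertex derivation) than to ``stability''.
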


\subsection*{Tautological insertion}
Let $L$ be $(\mathbb{C}^*)^d$-equivariant line bundle on $\mathbb{C}^d$. Then
$$
L=\oO_{\mathbb{C}^d} \otimes t_1^{u_1}\cdots t_d^{u_d}
$$
for some $(u_1,\ldots,u_d) \in \mathbb{Z}^d$. Let $\pi$ be a $(d-1)$-partition and $Z_\pi$ the corresponding zero-dimensional subscheme. We define 
$$
L_\pi(u_1,\ldots,u_d) := e_{T}\big( H^0(\mathbb{C}^d,\oO_{Z_\pi}\otimes L)\big) \in \mathbb{Q}(\lambda_1,\ldots,\lambda_d) / (\lambda_1+\cdots+\lambda_d),
$$
where 
\begin{equation} 
H^0(\mathbb{C}^d,\oO_{Z_\pi}\otimes L)=\sum_{i_1,\ldots, i_{d-1}\geqslant 1}
\sum_{m=1}^{\pi_{i_1\ldots i_{d-1}}} t_1^{u_1+i_1-1} \cdots t_{d-1}^{u_{d-1}+i_{d-1}-1} t_d^{u_d+m-1}.
\nonumber \end{equation}
By definition of the $T$-equivariant structure on $L^{[n]}$, we have
$$
e_T(L^{[n]})|_Z = L_{\pi}(u_1,\ldots,u_d). 
$$
In summary, with the notation introduced above, we may rewrite the $T$-equivariant counts in Definition \ref{def of equi invs for 4k} as follows
\begin{equation}Z_{\mathbb{C}^d,L,o({\mathcal{L}})}(q)=\sum_{(d-1)\textrm{-partition} \ \pi} L_{\pi}(u_1,\ldots,u_d) \, \mathsf{w}_{\pi} \, q^{|\pi|}, \nonumber \end{equation}
where the choices of orientation $o({\mathcal{L}})$ precisely correspond to the choices of sign for $\mathsf{w}_\pi$.

\subsection{Proof of Theorem \ref{thm for toric 2k-1}}\label{pf of odd conj}

The following proof is almost identical to the original argument for $d=3$ in \cite[Sect.~4.10, 4.11]{MNOP}. In order to convince the reader that the argument works in any odd dimension $d\geqslant 3$, we include the details. 
\begin{proof}
Let $d = 2k+1\geqslant 3$ and recall that we work with respect to the Calabi-Yau torus $T = \{t_1 \cdots t_{d} = 1\} \subset (\C^*)^d$. Let $\pi$ be a $(d-1)$-partition with corresponding zero-dimensional subscheme $Z_\pi$. We claim
$$
\frac{e_T\big(\Ext^{\mathrm{even}}(I_Z,I_Z)\big)}{e_T\big(\Ext^{\mathrm{odd}}(I_Z,I_Z)\big)} = (-1)^{|\pi|}.
$$
Define\footnote{Note that in \cite{MNOP}, a more complicated definition of $\mathsf{V}_\pi^+$ is needed in the presence of ``infinite legs''.}
\begin{align*}
\mathsf{V}_\pi^+ &:= Z_\pi - Z_\pi \overline{Z}_\pi \frac{(1-t_1) \cdots (1-t_{2k})}{t_1 \cdots t_{2k}}, \\
\mathsf{V}_\pi^- &:= \mathsf{V}_\pi - \mathsf{V}^+_\pi.
\end{align*}
Using $t_1 \cdots t_d=1$, a short calculation  shows that this splitting has the following key property
\begin{equation} \label{Vplusbar}
\overline{\mathsf{V}}_\pi^+ = - \mathsf{V}_\pi^-.
\end{equation}
Although $\mathsf{V}_\pi$ does not have a $T$-fixed term (Proposition \ref{verify key conj}), the new object $\mathsf{V}_\pi^+$ could have a $T$-fixed term. Suppose the $T$-fixed term of $\mathsf{V}_\pi^+$ is \emph{even}. Then \eqref{Vplusbar} implies
$$
\frac{e_T\big(\Ext^{\mathrm{even}}(I_Z,I_Z)\big)}{e_T\big(\Ext^{\mathrm{odd}}(I_Z,I_Z)\big)} = (-1)^{\mathsf{V}_\pi^+(1,\ldots,1)} = (-1)^{Z_\pi(1,\ldots,1)} = (-1)^{|\pi|}.
$$

We are left to show that the constant term of $\mathsf{V}_\pi^+$ is even. The proof is inductive on $|\pi|$. Suppose $b \in \pi$ is an extremal box on the highest level in the $x_{d}$-direction and suppose this box is located at $b = (b_1, \ldots, b_d)$ (where we use the coordinate of the corner closest to the origin).\,\footnote{``Extremal'' means that all boxes $b'=(b_1', \ldots, b_d')$ adjacent to $b$ satisfy $b_i' \leq b_i$ for all $i$.} We show that the contribution of $b$ to the constant term of $\mathsf{V}_\pi^+$ is even. This contribution comes from the following terms:
\begin{enumerate}
\item[(i)] $\Bigg( t_1^{b_1} \cdots t_d^{b_d} \Bigg)^T$,
\item[(ii)] $\Bigg( b \overline{b'} \frac{(1-t_1) \cdots (1-t_{2k})}{t_1 \cdots t_{2k}} \Bigg)^T$,
\item[(iii)] $\Bigg( b' \overline{b} \frac{(1-t_1) \cdots (1-t_{2k})}{t_1 \cdots t_{2k}} \Bigg)^T$,
\end{enumerate}
where $(\cdot)^T$ denotes $T$-fixed part, $b' \in \pi$ runs over all boxes, and where we only include contribution (ii) when $b'=b$. Observation: suppose $b' = (b_1', \ldots, b_{d-1}',z) \in \pi$ and $b^{\prime \prime} = (b_1', \ldots, b_{d-1}',z-1) \in \pi$, then the contribution of $b'$ to (ii) \emph{equals} the contribution of $b^{\prime\prime}$ to (iii), so they cancel modulo 2. We are therefore only left to consider the following contributions:
\begin{enumerate}
\item[(a)] Contribution of $b' = (b_1', \ldots, b_{d-1}',z) \in \pi$ to (iii), where $(b_1', \ldots, b_{d-1}',z+1) \not\in \pi$.
\item[(b)] Contribution of $b' = (b_1', \ldots, b_{d-1}',0) \in \pi$ to (ii).
\item[(c)] Contribution to (i).
\end{enumerate}
Contributions of type (a) are always zero. Indeed, if $z = b_d$, then $b' \neq b$ (recall that $b$ is extremal and if $b=b'$, we do not count (iii)). If $z<b_d$, then $b_{i}' >b_i$ for some $i=1, \ldots, d-1$. Neither case leads to $T$-fixed contributions.

Next, we analyse contributions of type (b). Suppose $b\neq(n,\ldots,n)$, i.e.~$b$ does not lie on the diagonal. In this case, the contribution to (b) is of the following form $0,\, \pm 2,\, \pm 4, \ldots,\, \pm2^{d-1}$, which all vanish modulo 2. Since we do not have contributions of type (c), we are done. Suppose $b=(n, \ldots, n)$, i.e.~$b$ lies on the diagonal. Then only $b' = (0, \ldots, 0)$ contributes to (b) and its contribution is 1. But $b$ also contributes 1 to (c), so these contributions cancel. We are done.
\end{proof}

\subsection{Computer calculations}

Using the vertex formalism developed in Section \ref{sect vertex formu}, we wrote a computer program in Maple which verifies Conjecture \ref{conj for toric 4k} in the following cases. The case $d=4$ has been verified modulo $q^7$ in \cite{CK}.
\begin{prop}\label{more verify of conj 4k}  
$ $
\begin{enumerate}
\item[(1)] Conjecture \ref{conj for toric 4k} holds for $\mathbb{C}^8$ modulo $q^7$.  
\item[(2)] Conjecture \ref{conj for toric 4k} holds for $\mathbb{C}^{12}$ modulo $q^5$. 
\end{enumerate}
\end{prop}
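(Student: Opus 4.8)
The plan is to reduce each case to a finite symbolic computation based on the vertex formalism of Section~\ref{sect vertex formu}, carried out in a computer algebra system. By Lemma~\ref{Vlem}, $-\RHom(I_{Z_\pi},I_{Z_\pi})_0$ equals the explicit Laurent polynomial $\mathsf{V}_\pi$ of \eqref{defV}, which depends only on the combinatorics of the $(d-1)$-partition $\pi$. For $d=8$ one enumerates all $7$-partitions $\pi$ with $|\pi|\leqslant 6$, and for $d=12$ all $11$-partitions with $|\pi|\leqslant 4$; in both of these ranges Conjecture~\ref{key conj} has already been verified in Proposition~\ref{verify key conj}, so all the half Euler classes appearing below are well defined.

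For each such $\pi$ one would then perform the following steps. First, form $\mathsf{V}_\pi$ from \eqref{defV} and impose $t_1\cdots t_d=1$, recording the decomposition of $\mathsf{V}_\pi$ into the $T$-representations $\Ext^i(I_{Z_\pi},I_{Z_\pi})$ read off from the signs of its monomials. Second, extract $\mathsf{w}_\pi=\sqrt{(-1)^{|\pi|}\,e_T(-\mathsf{V}_\pi)}$ as in Definition~\ref{wpi sq root}: the blocks $\Ext^{2i}$ with $i<k$ and $\Ext^{2i-1}$ with $1\leqslant i\leqslant k$ contribute honest products of linear weights, while the self-dual block $\Ext^{2k}$ contributes its half Euler class, obtained by choosing one weight from each Serre-dual pair of monomials; since $e_T(-\mathsf{V}_\pi)$ is a perfect square up to the universal sign $(-1)^{|\pi|}$, this amounts to a single square-root extraction. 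Third, compute the tautological factor $L_\pi(0,\dots,0,-\ell)=e_T\big(H^0(\oO_{Z_\pi}\otimes L)\big)$ for $L=\oO_{\mathbb{C}^d}\otimes t_d^{-\ell}$, keeping $\ell$ formal. Their product is the $q^{|\pi|}$-contribution of $[Z_\pi]$ to $Z_{\mathbb{C}^d,L,o(\lL)}(q)$, well defined up to the sign ambiguity in $\mathsf{w}_\pi$.

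Next, impose the further specialization $\lambda_1+\cdots+\lambda_{d-1}=\lambda_d=0$. The computation should reveal that after this substitution each term $L_\pi(0,\dots,0,-\ell)\cdot\mathsf{w}_\pi$ becomes independent of the remaining $\lambda_i$ and equals $(-1)^{|\pi|}\,\omega_\pi\prod_{i=1}^{\pi_{1\ldots1}}(\ell-(i-1))$ for an explicit rational number $\omega_\pi$ (this is exactly \eqref{spec}). Fixing the sign of each $\mathsf{w}_\pi$ so that $\omega_\pi>0$, one gets
\begin{equation*}
Z_{\mathbb{C}^d,L,o(\lL)}(q)\Big|_{\lambda_1+\cdots+\lambda_{d-1}=\lambda_d=0}=\sum_{(d-1)\textrm{-partitions }\pi}(-1)^{|\pi|}\,\omega_\pi\,\prod_{i=1}^{\pi_{1\ldots1}}(\ell-(i-1))\,q^{|\pi|},
\end{equation*}
and it remains to check that the right-hand side coincides with $M_{d-2}(-q)^{\ell}$ modulo $q^7$ when $d=8$, and modulo $q^5$ when $d=12$. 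As each coefficient of $q^n$ on both sides is a polynomial in $\ell$ of degree $\leqslant n$, one verifies this identity symbolically in $\Q[\ell]$, using the expansion of $M_{d-2}(q)$ recalled in Section~\ref{higher dim partition}.

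The hard part is organizational rather than conceptual: the intermediate objects are rational functions in $d-1$ equivariant variables, and the number of partitions to process runs into the thousands for $d=8$ (and several hundred for $d=12$), so the symbolic expressions must be kept under control — for instance by clearing denominators early and by exploiting the perfect-square structure so that $\mathsf{w}_\pi$ is never handled as a nested radical. One further point to confirm en route is that the two specializations never produce a vanishing denominator in $\mathsf{w}_\pi$ or $L_\pi$; this follows from Conjecture~\ref{key conj} in the verified range together with the explicit form of $L_\pi$. Finally, the existence of a \emph{consistent} system of orientations, as required in the statement of Conjecture~\ref{conj for toric 4k}, is precisely the assertion that all the signs can be chosen so that $\omega_\pi>0$, which the computation confirms in the stated ranges.
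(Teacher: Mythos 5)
Your proposal is correct and is essentially the paper's own proof: the paper establishes this proposition precisely by a Maple implementation of the vertex formalism of Section \ref{calc}, enumerating the relevant $7$- and $11$-partitions, computing $L_\pi(0,\dots,0,-\ell)\,\mathsf{w}_\pi$ with signs chosen so that $\omega_\pi>0$, specializing $\lambda_1+\cdots+\lambda_{d-1}=\lambda_d=0$, and matching the result against the expansion of $M_{d-2}(-q)^\ell$ coefficientwise as polynomials in $\ell$. The computational organization you describe (well-definedness via Proposition \ref{verify key conj}, exploiting the perfect-square structure, keeping $\ell$ formal) matches the paper's procedure.
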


In fact, experimentation with our Maple program leads us to a further conjecture.
\begin{conj}\label{sign unique}
The choices of orientation for which equation \eqref{4k eq} holds are unique.
\end{conj}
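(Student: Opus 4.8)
The plan is to reduce Conjecture~\ref{sign unique} to a strict-positivity statement about the universal rational weights $\omega_\pi$ and then dispatch it by an induction on $|\pi|$. Throughout I work in the vertex formalism of Section~\ref{sect vertex formu} and freely assume Conjecture~\ref{conj for toric 4k}, which supplies both a good orientation $o_0$ and, via the per-partition factorisation \eqref{spec}, the identity
\[
Z_{\mathbb{C}^d,L,o(\lL)}(q)\Big|_{\lambda_1+\cdots+\lambda_{d-1}=\lambda_d=0}=\sum_{(d-1)\textrm{-partition }\pi}(-1)^{|\pi|}\,\omega_\pi^{o}\,\Big(\textstyle\prod_{i=1}^{\pi_{1\ldots1}}(\ell-(i-1))\Big)\,q^{|\pi|},
\]
valid for every $\ell\in\mathbb{Z}$ and $L=\oO_{\mathbb{C}^d}\otimes t_d^{-\ell}$, where $\omega_\pi^{o}\in\mathbb{Q}$ is independent of $\ell$ and, crucially, changes sign precisely when the orientation is flipped at the single fixed point $[Z_\pi]$. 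Hence, writing an arbitrary orientation as $o$ with $\omega_\pi^{o}=\epsilon_\pi\,\omega_\pi^{o_0}$ for signs $\epsilon_\pi\in\{\pm1\}$, equation \eqref{4k eq} holds for $o$ (for all $\ell$) if and only if
\[
\sum_{\pi}(-1)^{|\pi|}\,(\epsilon_\pi-1)\,\omega_\pi^{o_0}\,\Big(\textstyle\prod_{i=1}^{\pi_{1\ldots1}}(\ell-(i-1))\Big)\,q^{|\pi|}=0
\]
as a formal power series in $q$, for all $\ell\in\mathbb{Z}$.

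Next I would argue by induction on $n$. Suppose $n$ is minimal such that $\epsilon_\pi=-1$ for some $\pi$ with $|\pi|=n$. Taking the coefficient of $q^n$ and sorting the partitions of size $n$ by the value $j:=\pi_{1\ldots1}\in\{1,\ldots,n\}$ gives, for every $\ell\in\mathbb{Z}$,
\[
\sum_{j=1}^{n}\Bigg(\sum_{\substack{|\pi|=n\\ \pi_{1\ldots1}=j}}(\epsilon_\pi-1)\,\omega_\pi^{o_0}\Bigg)\prod_{i=1}^{j}(\ell-(i-1))=0.
\]
Since the falling factorials $\prod_{i=1}^{j}(\ell-(i-1))$, $j\geqslant0$, are linearly independent polynomials in $\ell$ and the identity holds for infinitely many integers $\ell$, each inner sum vanishes. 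Now $\epsilon_\pi-1\in\{0,-2\}$, so if one knows $\omega_\pi^{o_0}>0$ for every $\pi$, then every summand $(\epsilon_\pi-1)\omega_\pi^{o_0}$ is $\leqslant0$, and a vanishing sum of non-positive numbers forces $\epsilon_\pi=1$ for all $\pi$ of size $n$, contradicting minimality of $n$. Therefore $o=o_0$, which proves uniqueness and simultaneously identifies the good orientations as exactly those with all $\omega_\pi>0$, matching Conjecture~\ref{sign unique intro}.

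The remaining---and genuinely hard---step is the strict positivity $\omega_\pi>0$ for the good orientation. One half of it is automatic: Theorem~\ref{4k-1 partition counting} (a consequence of Conjecture~\ref{conj for toric 4k}) gives $\sum_\pi\omega_\pi\,t^{\pi_{1\ldots1}}q^{|\pi|}=e^{t(M_{d-2}(q)-1)}$, and since $M_{d-2}(q)-1$ has non-negative coefficients, so does its exponential; thus $\omega_\pi\geqslant0$, which already yields uniqueness of the orientation up to irrelevant sign flips at partitions with $\omega_\pi=0$. To rule out $\omega_\pi=0$ I would pass to the purely combinatorial weight $\omega_\pi^c$ of Definition~\ref{combinatoric wpi}: first establish the comparison $\omega_\pi=\omega_\pi^c$ (Conjecture~\ref{compare wpi})---which is the principal obstacle, since it demands matching the localisation output $\mathsf{w}_\pi\cdot L_\pi$ under the singular specialisation $\lambda_1+\cdots+\lambda_{d-1}=\lambda_d=0$ with a combinatorial expression, and cannot be read off from the generating-series identity alone---and then prove directly from the definition of $\omega_\pi^c$ that it is a strictly positive rational number for every $\pi$, for instance by exhibiting it as a sum (or product) of manifestly positive contributions, or by an inductive refinement of the $e^{t(M_{d-2}(q)-1)}$ identity down to individual partitions. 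Granting these two inputs, the positivity hypothesis used above is supplied and Conjecture~\ref{sign unique} follows. I expect the comparison $\omega_\pi=\omega_\pi^c$ together with the strict positivity of $\omega_\pi^c$ to be the only demanding ingredients; everything else is the bookkeeping sketched above.
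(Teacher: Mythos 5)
Your proposal does not prove Conjecture~\ref{sign unique}, and indeed it could not, in the sense that what you have written is a conditional reduction to statements that are themselves open conjectures of the paper. The paper offers no proof of Conjecture~\ref{sign unique}: it is verified only by computer for $\mathbb{C}^8$ modulo $q^7$ and $\mathbb{C}^{12}$ modulo $q^5$ (Proposition~\ref{verify sign unique}), and the mechanism of that verification is exactly the one you sketch --- after the specialization the coefficient of $q^n$ is a polynomial in $\ell$, the good orientation (the one with all $\omega_\pi>0$) reproduces the coefficients of $M_{d-2}(-q)^\ell$, and any sign flip strictly decreases some coefficient. This is also precisely the content of Conjecture~\ref{sign unique intro}, which already identifies the good orientations as those with $\omega_\pi>0$ in \eqref{spec}. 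So your ``bookkeeping'' part (the expansion in the falling factorials $\prod_{i=1}^{j}(\ell-(i-1))$, their linear independence in $\ell$, and the observation that a vanishing sum of non-positive terms forces all $\epsilon_\pi=1$) is fine and matches the paper's own reasoning, but the load-bearing inputs --- the per-partition structure \eqref{spec} of Conjecture~\ref{conj for toric 4k} and the strict positivity $\omega_\pi>0$, i.e.\ Conjecture~\ref{specconj}, or equivalently Conjecture~\ref{compare wpi} combined with $\omega^c_\pi>0$ --- are exactly the open conjectures, so no new ground is gained. (A small caveat you handle implicitly but should state: uniqueness can only be meant with \eqref{4k eq} required for all $\ell$, or as a polynomial identity in $\ell$, since for a fixed $\ell$ the orientations at partitions with $\pi_{1\cdots1}>\ell$ are invisible.)

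There is moreover a concrete error in the step you call ``automatic.'' You derive $\omega_\pi\geqslant 0$ from Theorem~\ref{4k-1 partition counting}, but that theorem has Conjectures~\ref{conj for toric 4k}, \ref{sign unique}, and \ref{specconj} among its hypotheses, so invoking it while trying to prove Conjecture~\ref{sign unique} (and to establish positivity, which is part of Conjecture~\ref{specconj}) is circular. Independently of circularity, the generating-series identity only constrains the aggregated sums $\sum_{|\pi|=n,\ \pi_{1\cdots1}=j}\omega_\pi$, and non-negativity of these aggregate coefficients does not imply $\omega_\pi\geqslant0$ partition by partition; cancellations between individual terms are not excluded by the series identity alone. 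The genuinely easy half is the one you leave vague: $\omega^c_\pi>0$ follows directly from Definition~\ref{combinatoric wpi}, since slicing a $(d-1)$-partition $\pi$ by the superlevel sets in the last coordinate exhibits an explicit element of $\mathcal{C}_\pi$, so $\mathcal{C}_\pi\neq\emptyset$ and $\omega^c_\pi$ is a nonempty sum of positive terms. The irreducible difficulty is therefore the comparison $\omega_\pi=\omega^c_\pi$ (or directly $\omega_\pi>0$), i.e.\ controlling the localization weight $\mathsf{w}_\pi$ under the degenerate specialization $\lambda_1+\cdots+\lambda_{d-1}=\lambda_d=0$; your proposal correctly isolates this but offers no strategy for it beyond restating the conjecture, so the gap remains exactly where the paper left it.
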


Indeed, we verifiy this in the following cases.
\begin{prop}\label{verify sign unique}  
$ $
\begin{enumerate}
\item[(1)] Conjecture \ref{sign unique} holds for $\mathbb{C}^8$ modulo $q^7$.  
\item[(2)] Conjecture \ref{sign unique} holds for $\mathbb{C}^{12}$ modulo $q^5$. 
\end{enumerate}
\end{prop}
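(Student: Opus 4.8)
The plan is to deduce Conjecture \ref{sign unique} in these ranges from three ingredients: the already-established validity of the ``positive'' orientation (Proposition \ref{more verify of conj 4k}), an explicit machine computation of the rational weights $\omega_\pi$, and an elementary positivity argument. No input beyond the vertex formalism of Section \ref{sect vertex formu} is needed.

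Recall that
\[
Z_{\mathbb{C}^d,L,o({\mathcal{L}})}(q) = \sum_{(d-1)\textrm{-partitions } \pi} L_\pi(0,\ldots,0,-\ell)\, \mathsf{w}_\pi\, q^{|\pi|},
\]
where $L = \oO_{\mathbb{C}^d}\otimes t_d^{-\ell}$, each $\mathsf{w}_\pi$ (Definition \ref{wpi sq root}) is pinned down only up to sign through the half Euler class of the self-dual piece $\Ext^{2k}(I_{Z_\pi},I_{Z_\pi})$, and a choice of orientation $o(\lL)$ is exactly a choice of signs $\epsilon_\pi \in \{\pm 1\}$. Using the Maple implementation of the vertex $\mathsf{V}_\pi$, I would compute, for every $(d-1)$-partition $\pi$ with $|\pi|\leq 6$ when $d=8$ and $|\pi|\leq 4$ when $d=12$, the product $L_\pi(0,\ldots,0,-\ell)\,\mathsf{w}_\pi$ together with its specialization at $\lambda_1+\cdots+\lambda_{d-1}=\lambda_d=0$. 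That each $\mathsf{w}_\pi$ is well-defined in $\mathbb{Q}(\lambda_1,\ldots,\lambda_d)/(\lambda_1+\cdots+\lambda_d)$ (no division by zero) follows from Proposition \ref{verify key conj}; that the \emph{product} $L_\pi\mathsf{w}_\pi$ stays finite under the further specialization --- the poles of $\mathsf{w}_\pi$ being cancelled by the zeros of the tautological factor $L_\pi$ --- is part of what the computation checks, and is subsumed in Conjecture \ref{conj for toric 4k} as verified in these ranges by Proposition \ref{more verify of conj 4k}. The output is, for each $\pi$, an explicit polynomial in $\ell$ of the shape $(-1)^{|\pi|}\omega_\pi\prod_{i=1}^{\pi_{1\ldots1}}(\ell-(i-1))$ with $\omega_\pi\in\mathbb{Q}$; the decisive fact to extract is that $\omega_\pi\neq 0$ for every $\pi$ in range, and one then normalizes the sign of $\mathsf{w}_\pi$ so that $\omega_\pi>0$.

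Given these weights, uniqueness is formal. Write $(\ell)_m:=\prod_{i=1}^{m}(\ell-(i-1))$, so $\{(\ell)_m\}_{m\geq 0}$ is a $\mathbb{Q}$-basis of $\mathbb{Q}[\ell]$, and set $N=6$ for $d=8$ and $N=4$ for $d=12$. By Proposition \ref{more verify of conj 4k}, the positive orientation satisfies \eqref{4k eq} modulo $q^{N+1}$. Let $o(\lL)$ be any orientation, with signs $\epsilon_\pi$, for which \eqref{4k eq} also holds modulo $q^{N+1}$. Subtracting the two identities, taking the coefficient of $q^n$ for each $n\leq N$, and then --- since the resulting polynomial identity in $\ell$ must hold for all $\ell\in\mathbb{Z}$, hence identically --- equating coefficients in the basis $(\ell)_m$, we get
\[
\sum_{\substack{|\pi|=n\\ \pi_{1\ldots1}=m}} (\epsilon_\pi-1)\,\omega_\pi \;=\; 0
\qquad\text{for all } n\leq N,\ m\geq 0.
\]
Since $\epsilon_\pi-1\in\{0,-2\}$ while $\omega_\pi>0$, every summand is $\leq 0$, so they all vanish and $\epsilon_\pi=1$ for every $\pi$ with $|\pi|\leq N$. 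Thus the positive orientation is the \emph{unique} one for which \eqref{4k eq} holds in these ranges --- and, consistently with Conjecture \ref{sign unique intro}, it is characterized by $\omega_\pi>0$.

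The real obstacle is the size and reliability of the computation in the second paragraph, not the argument: one must enumerate all $7$-partitions of size $\leq 6$ and all $11$-partitions of size $\leq 4$, expand $\mathsf{V}_\pi$ as a Laurent polynomial in $t_1,\ldots,t_d$, read off $e_T(-\mathsf{V}_\pi)$ as a product of linear forms in the $\lambda_i$ with the square-root sign fixed via the positive real form of $\Ext^{2k}$, perform the specialization without spurious cancellation, and confirm the polynomial-in-$\ell$ shape of each term. The one arithmetic point on which the whole argument hinges is that no $\omega_\pi$ vanishes in these ranges; if some $\omega_\pi$ were zero, both signs of that $\mathsf{w}_\pi$ would yield orientations satisfying \eqref{4k eq} and uniqueness would already fail.
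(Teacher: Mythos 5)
Your proposal is correct and is essentially the paper's own verification: a Maple computation of the weights via the vertex formalism of Section \ref{sect vertex formu}, confirmation of the polynomial-in-$\ell$ shape with $\omega_\pi>0$ in the stated ranges, and then a positivity argument showing that flipping any sign destroys equality \eqref{4k eq}. Your expansion in the falling-factorial basis $(\ell)_m$ is just a repackaging of the paper's descending induction on the coefficients of $\ell^{6-i}$; the only ingredient you omit is the paper's practical use of the $S_{d-1}$ permutation symmetry of the coordinates $x_1,\ldots,x_{d-1}$ to make the enumeration of partitions (and the a priori $2^{2024}$ orientation choices) computationally feasible.
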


\begin{rmk}
A priori the number of choices of orientation for the coefficient of $q^6$ of $Z_{\mathbb{C}^8,L,o({\mathcal{L}})}(q)$ equals 
$$
2^{2024} \approx 2 \cdot 10^{609},
$$
which is enormous. Nevertheless, we are able to go through all possible orientations due to the following observation. After specializing $\lambda_1+ \cdots+\lambda_7 = \lambda_8=0$ and for any choice of orientations, the coefficient of $q^6$ of $Z_{\mathbb{C}^8,L,o({\mathcal{L}})}(q)$ is a degree 6 polynomial in $\ell$. Starting with $i=0$, we inductively consider the coefficient $a_{6-i}$ of $\ell^{6-i}$ and we observe:
\begin{itemize}
\item If all 7-partitions $\pi$ contributing to $a_{6-i}$ are chosen with orientation such that $\omega_\pi > 0$, then we obtain the coefficient $b_{6-i}>0$ of $\ell^{6-i}$ coming from expanding $M_6(-q)^\ell$. 
\item Since $b_{6-i}>0$, any other choice of orientation would lead to $a_{6-i} < b_{6-i}$.
 \end{itemize} 
 Another simplification occurs by noting that the weight 
 $$
 \pm L_{\pi}(0,\ldots, 0,-\ell) \, \mathsf{w}_{\pi} \Big|_{\lambda_1+ \cdots+\lambda_7 = \lambda_8=0}
 $$ 
assigned to a 7-partition $\pi$ is invariant under permuting the coordinate axes $x_1, \ldots, x_7$ (but not $x_8$). Therefore, we only have to work with 7-partitions up to these permutation symmetries. A similar strategy works in all cases where we checked uniqueness.
\end{rmk}

\subsection{Application to enumerating partitions}

In this section we discuss relations of Conjectures \ref{conj for toric 4k} and \ref{sign unique} to the enumeration of $(d-1)$-partitions, where $d \geqslant4$ and $d \equiv 0\ \mathrm{mod}\, 4$. As part of Conjecture \ref{conj for toric 4k}, we state that the specialization $\lambda_1+\cdots+\lambda_{d-1}=\lambda_d=0$ is well-defined. Using the topological vertex discussed earlier in this section, and implemented into a Maple routine for $d=4,\, 8,\, 12$, we conjecture the following.
\begin{conj} \label{specconj}
Let $d\geqslant4$ such that $d \equiv 0\ \mathrm{mod}\, 4$. Let $\pi=\{\pi_{i_1\ldots i_{d-1}}\}_{i_1,\ldots, i_{d-1} \geqslant 1}$ be a $(d-1)$-partition. Consider $\mathsf{w}_\pi$ with the unique sign given by Conjecture \ref{sign unique}. Then 
$$
L_\pi(0,0,\ldots,-\ell) \, \mathsf{w}_\pi  \big|_{\lambda_1+\lambda_2+\cdots+\lambda_{d-1}=\lambda_{d}=0} = (-1)^{|\pi|}\,\omega_\pi \cdot \prod_{m=1}^{\pi_{1\cdots 1}} (\ell-(m-1)),
$$
for some $\omega_\pi \in \mathbb{Q}_{>0}$.
\end{conj}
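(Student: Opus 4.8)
Write $n=|\pi|$. By Lemma~\ref{Vlem}, Definition~\ref{wpi sq root} and the description of the tautological insertion, the left-hand side equals $L_\pi(0,\dots,0,-\ell)\,\mathsf{w}_\pi$, where $\mathsf{w}_\pi=\sqrt{(-1)^{n}\,e_T(-\mathsf{V}_\pi)}$ for the vertex $\mathsf{V}_\pi$ of \eqref{defV}, and $L_\pi(0,\dots,0,-\ell)=\prod_{(i_1,\dots,i_{d-1},m)\in\pi}\big((i_1-1)\lambda_1+\cdots+(i_{d-1}-1)\lambda_{d-1}+(m-1-\ell)\lambda_d\big)$. The specialization $\lambda_1+\cdots+\lambda_{d-1}=\lambda_d=0$ is the restriction of $T$ to the subtorus $T_0$ acting with the Calabi--Yau action on $D=\{x_d=0\}\cong\C^{d-1}$ (note $d-1$ is \emph{odd}) and trivially on the $x_d$-axis. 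The plan is to separate the contribution of the $x_d$-axis, which produces the polynomial $\prod_{m=1}^{\pi_{1\cdots1}}(\ell-(m-1))$, from a ``transverse'' part that on $\C^{d-1}$ is governed by Theorem~\ref{thm for toric 2k-1}, and then to pin down the remaining constant and its sign using Conjecture~\ref{sign unique}.

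\textbf{Step 1: the specialized vertex and its poles.} I would first observe that setting $t_d=1$ in $\mathsf{V}_\pi$ kills the quadratic term $Z_\pi\overline{Z}_\pi\prod_{i=1}^d(1-t_i)/(t_1\cdots t_d)$, since it contains the factor $1-t_d$; so $\mathsf{V}_\pi|_{t_d=1}$ is built only from $W:=Z_\pi|_{t_d=1}=\sum\pi_{i_1\ldots i_{d-1}}\,t_1^{i_1-1}\cdots t_{d-1}^{i_{d-1}-1}$ — the sum of the characters of the slices of $\pi$, each a $(d-2)$-partition on $\C^{d-1}$ — and its Serre dual. Since $e_T$ at $\lambda_d=0$ sees only $\lambda_1,\dots,\lambda_{d-1}$, this identifies $e_T(-\mathsf{V}_\pi)|_{\lambda_d=0}$ with the corresponding $T_0$-equivariant Euler class on $\C^{d-1}$, once one checks the latter is regular and nonzero. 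Along $\lambda_1+\cdots+\lambda_{d-1}=0$ the only monomials of $W$ (and its dual) with vanishing weight are the constant term, with multiplicity $\pi_{1\cdots1}$, and the off-origin diagonal monomials $t_1^{c-1}\cdots t_{d-1}^{c-1}$ with multiplicity $\pi_{c\cdots c}$; hence $e_T(-\mathsf{V}_\pi)$ has a pole of order $2\sum_{c\ge1}\pi_{c\cdots c}$ there, so $\mathsf{w}_\pi$ has a pole of order $\sum_{c\ge1}\pi_{c\cdots c}$. This is the $\lambda_d=0$ analogue of Conjecture~\ref{key conj}, and I would establish it by the $\mathsf{V}^+/\mathsf{V}^-$-splitting and extremal-box induction used for Theorem~\ref{thm for toric 2k-1}, now keeping track of the $x_d$-exponent as well.

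\textbf{Step 2: the axis factor and the shape of the answer.} Dually, $L_\pi(0,\dots,0,-\ell)$ vanishes to order $\sum_{c\ge1}\pi_{c\cdots c}$ along $\lambda_1+\cdots+\lambda_{d-1}=0$, since the diagonal-column box $(c,\dots,c,m)$ has weight $(\ell+c-m)(\lambda_1+\cdots+\lambda_{d-1})$. Multiplying, $L_\pi(0,\dots,0,-\ell)\,\mathsf{w}_\pi$ then has a regular, nonzero restriction; the $x_d$-axis ($c=1$) boxes contribute exactly $(\lambda_1+\cdots+\lambda_{d-1})^{\pi_{1\cdots1}}\prod_{m=1}^{\pi_{1\cdots1}}(\ell-(m-1))$, the first factor being cancelled by the pole, and this is the origin of the claimed $\ell$-polynomial. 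Independently, for $\ell=j$ with $0\le j\le\pi_{1\cdots1}-1$ the box $(1,\dots,1,j+1)$ has weight $(j-\ell)\lambda_d$, which vanishes, so $L_\pi(0,\dots,0,-j)\equiv0$ (a Lemma~\ref{vanishing}-type vanishing) and the restriction is divisible by $\prod_{m=1}^{\pi_{1\cdots1}}(\ell-(m-1))$; a degree count would then give $L_\pi(0,\dots,0,-\ell)\,\mathsf{w}_\pi|_{\mathrm{spec}}=c_\pi\prod_{m=1}^{\pi_{1\cdots1}}(\ell-(m-1))$ for some $c_\pi\in\mathbb{Q}$ — provided one has first shown the diagonal columns with $c\ge2$ (which first occur only in very large degree, and have not been checked numerically) contribute no surviving factors $(\ell+c-m)$.

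\textbf{Step 3: identifying $c_\pi$, positivity, and the main obstacle.} It remains to show $c_\pi=(-1)^{n}\omega_\pi$ with $\omega_\pi>0$. After removing the axis factor, the transverse contribution is exactly the one appearing in the proof of Proposition~\ref{thm for equiv sm div}: by Proposition~\ref{compare ext} (whose adjunction $\oO_D(D)\cong K_D$ is $T_0$-equivariant precisely because $\lambda_d=0$) it is expressed through $\sum_i(-1)^{i-1}\Ext^i_{\C^{d-1}}(\oO,\oO)$ of the slices, which by Theorem~\ref{thm for toric 2k-1} evaluates to the sign $(-1)^{n-\pi_{1\cdots1}}$ times a rational square; together with the sign $(-1)^{\pi_{1\cdots1}}$ from the axis, $c_\pi$ is $(-1)^{n}$ times a rational square, up to the single global sign entering the choice of square root that defines $\mathsf{w}_\pi$. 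The hard part — and the reason this, like Conjectures~\ref{key conj}, \ref{conj for toric 4k} and \ref{sign unique}, remains open — will be to fix that sign \emph{coherently in $\pi$}: one must exhibit one family of orientations on all $\Hilb^n(\C^d)$ for which $c_\pi=(-1)^n\omega_\pi$ with $\omega_\pi\in\mathbb{Q}_{>0}$ simultaneously (equivalently, for which the specialized generating series is an honest power series). I do not see how to reach this without either proving that the combinatorial weight of Definition~\ref{combinatoric wpi} equals $\omega_\pi$ — itself conjectural — or giving a direct geometric construction of these orientations, the same $\DT_4$-type input that is currently missing; I would expect this, rather than Steps~1--2, to be the real bottleneck.
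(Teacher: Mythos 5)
The statement you were asked to prove is Conjecture~\ref{specconj}, which the paper itself does \emph{not} prove: its only support there is Proposition~\ref{weight wpi check} (the case $\ell=1$, deduced from the proof of Proposition~\ref{thm for equiv sm div}, plus Maple verifications of the vertex for $d=4,8,12$ in low degree and absolute-value checks for a few larger partitions). So there is no complete argument in the paper to compare against, and your proposal is honestly calibrated: you do not claim a proof, and you correctly identify the genuinely open part. Your Steps 1--2 sharpen the heuristic the paper gives right after the conjecture (the tautological section for $D=\{x_d^{\ell}=0\}$ cutting out partitions of height $\pi_{1\cdots1}\leqslant\ell$), and the concrete computations there are correct: on the Calabi--Yau torus the boxes whose $L$-weight is proportional to $\lambda_1+\cdots+\lambda_{d-1}$ are exactly the diagonal ones, the $c=1$ column produces the factor $\prod_{m=1}^{\pi_{1\cdots1}}(\ell-(m-1))$, and $L_\pi(0,\ldots,0,-j)\equiv 0$ for $0\leqslant j\leqslant\pi_{1\cdots1}-1$, consistent with Lemma~\ref{vanishing}.

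The gaps are the ones you yourself flag, and they are real. First, the regularity and exact pole-order claim in Step 1 (the ``$\lambda_d=0$ analogue of Conjecture~\ref{key conj}'') is asserted, not proved: computing the order of $e_T(-\mathsf{V}_\pi)$ along $\{\lambda_1+\cdots+\lambda_{d-1}=0\}$ requires the net count of monomials of $\mathsf{V}_\pi$ with $w_1=\cdots=w_{d-1}$ and $w_d\neq w_1$, and the quadratic term $Z_\pi\overline{Z}_\pi(1-t_1)\cdots(1-t_d)/(t_1\cdots t_d)$ can contribute such monomials even though it vanishes under the character restriction $t_d=1$ (its $(1-t_d)$-pairs have equal first $d-1$ exponents, so the cancellation you invoke needs an argument handling the $T$-fixed members of those pairs); your proposed $\mathsf{V}^{\pm}$/extremal-box induction is plausible but not carried out, and nothing of this sort appears in the paper. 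Second, Step 3 --- that the surviving constant is $(-1)^{|\pi|}$ times a \emph{positive} rational, coherently for the unique orientation of Conjecture~\ref{sign unique} --- is precisely the conjectural content; the paper has no mechanism for it either beyond $\ell=1$ (where $\omega_\pi=1$) and machine checks. In short: no step of your plan is wrong, but it does not constitute a proof, which is the correct state of affairs for an open conjecture; the one piece you could have extracted unconditionally is the $\ell=1$ case via Propositions~\ref{compare ext} and \ref{thm for equiv sm div} together with Theorem~\ref{thm for toric 2k-1}, which your Step 3 gestures at but does not cleanly isolate.
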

Geometrically, this specialization corresponds to taking $X = \mathbb{C}^{d}$ and $D = \{x^{\ell}_d=0\} \subset \mathbb{C}^{d}$. Then $L = \oO(D) \cong \oO \otimes t_{d}^{-\ell}$ and the canonical section of $L^{[n]}$ on $\Hilb^n(\mathbb{C}^{d})$ cuts out the sublocus of zero-dimensional subschemes $Z$ contained in $D$ (Proposition \ref{section s}). The $T$-fixed points of this locus correspond precisely to the $(d-1)$-partitions $\pi$ of height $\pi_{1\cdots1}\leqslant \ell$. 

We provide the following evidence for the above conjecture.
\begin{prop}\label{weight wpi check}
Conjecture \ref{specconj} is true in the following settings:
\begin{itemize}  
\item When $\ell=1$ (in which case $\omega_\pi$ can be taken to be 1).
\item When $d=4$ and $|\pi|\leqslant 6$ (see \cite[Prop.~4.2]{CK}).  
\item When $d=8$ and $|\pi|\leqslant 6$.  
\item When $d=12$ and $|\pi|\leqslant 4$. 
\end{itemize}
Furthermore, the absolute value of Conjecture \ref{specconj} is true in the following cases:
\begin{itemize}
\item For the list of individual $3$-partitions of sizes 7--15 in \cite[App.~A]{CK}.
\item For the list of individual $7$-partitions of sizes 9, 10, 14 in Remark \ref{indivpart}.
\end{itemize}
\end{prop}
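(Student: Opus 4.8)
The proposition is a disjunction of several assertions, and the plan is to treat them in three groups: the universal case $\ell = 1$; the dimension-four case, which is \cite[Prop.~4.2]{CK}; and the finite lists of partitions in dimensions $8$ and $12$ together with the individual partitions, which are established by explicit computation in the vertex formalism of Section~\ref{sect vertex formu}. The starting point is the reformulation from the end of Section~\ref{sect vertex formu}: writing $L = \oO_{\mathbb{C}^d}\otimes t_d^{-\ell}$, one has $Z_{\mathbb{C}^d,L,o(\lL)}(q) = \sum_\pi L_\pi(0,\ldots,0,-\ell)\,\mathsf{w}_\pi\,q^{|\pi|}$, and on the Calabi-Yau torus the specialization ``$\lambda_1 + \cdots + \lambda_{d-1} = \lambda_d = 0$'' is just $\lambda_d = 0$. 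So Conjecture~\ref{specconj} for a fixed $\pi$ is exactly the claim that $L_\pi(0,\ldots,0,-\ell)\,\mathsf{w}_\pi$, after setting $\lambda_d = 0$, equals $(-1)^{|\pi|}\,\omega_\pi\,\prod_{m=1}^{\pi_{1\cdots1}}(\ell - (m-1))$ with $\omega_\pi$ a $\lambda$-independent positive rational. For all $\pi$ in the stated ranges, Conjecture~\ref{key conj} holds (by Proposition~\ref{verify key conj} for $d = 8,12$ and the individual $7$-partitions, and by \cite{CK} for $d = 4$), so $\mathsf{w}_\pi$ is defined up to sign via Definition~\ref{wpi sq root}.

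\textbf{The case $\ell = 1$.} Here $L = \oO_{\mathbb{C}^d}\otimes t_d^{-1} = \oO(D)$ for $D = \{x_d = 0\}$, and we split on $\pi_{1\cdots1}$. If $\pi_{1\cdots1} \geq 2$, then $\oO_{Z_\pi}$ contains the character $t_d$ coming from the box $(1,\ldots,1,2)$, equivalently $Z_\pi \not\subset D$, so Lemma~\ref{vanishing} gives $L_\pi(0,\ldots,0,-1) = e_T(L^{[|\pi|]}|_{Z_\pi}) = 0$; since $\prod_{m=1}^{\pi_{1\cdots1}}(1 - (m-1)) = 0$ as well, both sides of Conjecture~\ref{specconj} vanish and we take $\omega_\pi = 1$. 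If $\pi_{1\cdots1} \leq 1$, then $Z_\pi$ lies in $D \cong \mathbb{C}^{d-1}$, and the computation in the proof of Proposition~\ref{thm for equiv sm div} --- combining Propositions~\ref{ideal and str sheaf} and \ref{compare ext} to pass from the Ext groups on $X$ to those on $D$ --- identifies $L_\pi(0,\ldots,0,-1)\,\mathsf{w}_\pi|_{\lambda_d = 0}$ with $\pm\, e_{T_0}(\Ext^{\mathrm{even}}_D(I_{Z,D},I_{Z,D}))/e_{T_0}(\Ext^{\mathrm{odd}}_D(I_{Z,D},I_{Z,D}))$, where $T_0$ is the Calabi-Yau torus of $D$. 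As $\dim_{\mathbb{C}}D = d-1$ is odd, the per-partition identity in the proof of Theorem~\ref{thm for toric 2k-1} evaluates this quotient to $(-1)^{|\pi|}$, so with the orientation making the sign $+$ we get $L_\pi(0,\ldots,0,-1)\,\mathsf{w}_\pi|_{\lambda_d = 0} = (-1)^{|\pi|} = (-1)^{|\pi|}\cdot 1\cdot\prod_{m=1}^{\pi_{1\cdots1}}(1 - (m-1))$, i.e.\ $\omega_\pi = 1 > 0$. The case $d = 4$, $|\pi| \leq 6$ is \cite[Prop.~4.2]{CK} transcribed through the dictionary above.

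\textbf{The finite lists in dimensions $8$, $12$ and the individual partitions.} For each $\pi$ in the stated ranges we run the following routine. Form $Z_\pi$ from \eqref{Zpi}, then $\mathsf{V}_\pi$ from \eqref{defV}, note that Conjecture~\ref{key conj} holds for $\pi$ so that $e_T(-\mathsf{V}_\pi)$ is a well-defined rational function, and extract $\mathsf{w}_\pi$ (up to sign) as in Definition~\ref{wpi sq root}. Multiply by $L_\pi(0,\ldots,0,-\ell)$: its boxes over the origin contribute the factor $(-\lambda_d)^{\pi_{1\cdots1}}\prod_{m=1}^{\pi_{1\cdots1}}(\ell - (m-1))$, while $\mathsf{w}_\pi$ carries a pole along $\{\lambda_d = 0\}$ of the matching order, and one verifies that the product extends to $\lambda_d = 0$ with value $(-1)^{|\pi|}\,\omega_\pi\,\prod_{m=1}^{\pi_{1\cdots1}}(\ell - (m-1))$ for a positive rational $\omega_\pi$ \emph{independent of the remaining equivariant parameters}. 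In the ranges of Proposition~\ref{verify sign unique} the sign of $\mathsf{w}_\pi$ is the one selected by Conjecture~\ref{sign unique}; for the $3$-partitions of sizes $7$--$15$ of \cite[App.~A]{CK} and the $7$-partitions of sizes $9,10,14$ of Remark~\ref{indivpart} we verify only the absolute value, i.e.\ the identity up to the overall sign of $\mathsf{w}_\pi$. To make the computation feasible we use that $L_\pi(0,\ldots,0,-\ell)\,\mathsf{w}_\pi$ is invariant under permuting $x_1,\ldots,x_{d-1}$, so it suffices to process partitions up to this symmetry.

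\textbf{The main difficulty.} The crux is the degeneration $\lambda_d \to 0$, where $\mathsf{w}_\pi$ genuinely has a pole, cancelled by the vanishing of $L_\pi(0,\ldots,0,-\ell)$ along the column over the origin: one must take an honest Laurent expansion in $\lambda_d$ rather than substitute, and it is the finiteness of the resulting limit --- part of Conjecture~\ref{conj for toric 4k} --- together with the complete collapse of its $\lambda$-dependence down to $(-1)^{|\pi|}\omega_\pi\prod_{m=1}^{\pi_{1\cdots1}}(\ell - (m-1))$ with $\omega_\pi \in \mathbb{Q}$ that carries the real content. For larger $|\pi|$ the Laurent polynomial $\mathsf{V}_\pi$ and the symbolic square root grow rapidly, which is why the verified range is bounded and the coordinate-permutation symmetry is essential; pinning down the \emph{sign} of $\mathsf{w}_\pi$ (rather than just $|\omega_\pi|$) further requires Conjecture~\ref{sign unique}, known only in the ranges of Proposition~\ref{verify sign unique}.
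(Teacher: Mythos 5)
Your proposal is correct and follows essentially the same route as the paper: the $\ell=1$ case is reduced exactly as in the proof of Proposition \ref{thm for equiv sm div} (vanishing of $e_T(L^{[n]}|_Z)$ via Lemma \ref{vanishing} when $Z_\pi\not\subset D$, and reduction to the odd-dimensional identity of Theorem \ref{thm for toric 2k-1} on $D\cong\mathbb{C}^{d-1}$ when $Z_\pi\subset D$, giving $\omega_\pi=1$), while the $d=4$ case is \cite[Prop.~4.2]{CK} and the $d=8$, $d=12$ and individual-partition cases are exactly the explicit vertex-formalism (Maple) computations the paper relies on, including the coordinate-permutation reduction. The only difference is that you spell out the case split and the computational routine in more detail than the paper's two-line proof.
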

\begin{proof}
The case $\ell=1$ follows from the proof of Proposition \ref{thm for equiv sm div}. The other cases follow from our implementation into Maple of the topological vertex for $\mathbb{C}^4$, $ \mathbb{C}^8$, $\mathbb{C}^{12}$ and computer calculations.
\end{proof}
We have the following application of Conjectures \ref{conj for toric 4k}, \ref{sign unique}, and \ref{specconj}.
\begin{thm}\label{4k-1 partition counting}
Assume Conjectures \ref{conj for toric 4k}, \ref{sign unique}, and \ref{specconj} are true for $d$ (where $d \geqslant 4$ satisfies $d \equiv 0\  \mathrm{mod}\, 4$). Then
\begin{equation*}\label{solid part count}
\sum_{(d-1)\textrm{-}\mathrm{partitions} \, \pi} \omega_\pi \,t^{\pi_{1\cdots1}} \, q^{|\pi|} = e^{t (M_{d-2}(q)-1)}, \end{equation*}
where $t$ is a formal variable. In particular, by setting $t = 1$, we obtain
\begin{equation}
\sum_{(d-1)\textrm{-}\mathrm{partitions} \, \pi} \omega_\pi \, q^{|\pi|} = e^{M_{d-2}(q)-1}. \nonumber 
\end{equation}
\end{thm}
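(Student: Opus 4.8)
The plan is to package Conjectures~\ref{conj for toric 4k}, \ref{sign unique} and \ref{specconj} into a single generating‑function identity, and then perform a change of basis from falling factorials to monomials in the variable $t$. First I would rewrite the left‑hand side of Conjecture~\ref{conj for toric 4k} using the vertex formula from Section~\ref{sect vertex formu}: for $L=\oO_{\mathbb{C}^d}\otimes t_d^{-\ell}$ one has $(u_1,\ldots,u_d)=(0,\ldots,0,-\ell)$ and $Z_{\mathbb{C}^d,L,o({\mathcal{L}})}(q)=\sum_{\pi} L_\pi(0,\ldots,0,-\ell)\,\mathsf{w}_\pi\, q^{|\pi|}$. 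Fixing the orientation to be the unique one provided by Conjecture~\ref{sign unique} --- the same orientation appearing in Conjectures~\ref{conj for toric 4k} and \ref{specconj}, so that the three statements are mutually compatible --- and specializing $\lambda_1+\cdots+\lambda_{d-1}=\lambda_d=0$, Conjectures~\ref{specconj} and \ref{conj for toric 4k} together give, for every integer $\ell\geqslant 0$,
\[
M_{d-2}(-q)^{\ell}=\sum_{\pi}(-1)^{|\pi|}\,\omega_\pi\prod_{m=1}^{\pi_{1\cdots1}}\bigl(\ell-(m-1)\bigr)\, q^{|\pi|}.
\]
Substituting $q\mapsto -q$, writing $(\ell)_k:=\ell(\ell-1)\cdots(\ell-k+1)$ for the falling factorial (with $(\ell)_0:=1$), and collecting partitions by the corner value $k=\pi_{1\cdots1}$, this becomes
\[
M_{d-2}(q)^{\ell}=\sum_{k\geqslant 0}(\ell)_k\,F_k(q),\qquad F_k(q):=\sum_{\pi:\ \pi_{1\cdots1}=k}\omega_\pi\, q^{|\pi|}.
\]

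Next I would upgrade this identity from integer $\ell$ to a formal variable $t$. For each fixed $n$, the coefficient of $q^n$ in $M_{d-2}(q)^{\ell}=\exp\bigl(\ell\log M_{d-2}(q)\bigr)$ is a polynomial in $\ell$; and since the smallest $(d-1)$‑partition with $\pi_{1\cdots1}=k$ has size $k$, we have $F_k(q)=O(q^k)$, so the coefficient of $q^n$ on the right is the finite sum $\sum_{k\leqslant n}(\ell)_k\,[q^n]F_k(q)$, again a polynomial in $\ell$. Hence the two sides agree as polynomials in $\ell$, and the identity $M_{d-2}(q)^{t}=\sum_{k}(t)_k\,F_k(q)$ holds in $\mathbb{Q}[t][[q]]$. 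On the other hand, the binomial theorem gives $M_{d-2}(q)^{t}=\bigl(1+(M_{d-2}(q)-1)\bigr)^{t}=\sum_{k}\binom{t}{k}\bigl(M_{d-2}(q)-1\bigr)^{k}=\sum_{k}(t)_k\,\bigl(M_{d-2}(q)-1\bigr)^{k}/k!$, where again $\bigl(M_{d-2}(q)-1\bigr)^{k}=O(q^k)$. Comparing these two expansions order by order in $q$ and using that $\{(t)_k\}_{k\geqslant 0}$ is a $\mathbb{Q}$‑basis of $\mathbb{Q}[t]$, I would conclude that $F_k(q)=\bigl(M_{d-2}(q)-1\bigr)^{k}/k!$ for all $k$.

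Finally, multiplying by $t^k$ and summing over $k$ gives
\[
\sum_{(d-1)\textrm{-partitions }\pi}\omega_\pi\, t^{\pi_{1\cdots1}}\, q^{|\pi|}=\sum_{k\geqslant 0}F_k(q)\,t^{k}=\sum_{k\geqslant 0}\frac{\bigl(t(M_{d-2}(q)-1)\bigr)^{k}}{k!}=e^{t(M_{d-2}(q)-1)},
\]
and the special case $t=1$ is exactly the second displayed formula. The only genuinely delicate step in this argument is the passage from integer $\ell$ to the formal variable $t$ together with the matching of coefficients in the falling‑factorial basis; this is legitimate precisely because of the vanishing orders $F_k(q)=O(q^k)$ and $\bigl(M_{d-2}(q)-1\bigr)^{k}=O(q^k)$, which ensure that at each order in $q$ only finitely many $k$ contribute, so that the comparison makes sense. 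Everything else is formal power‑series bookkeeping, granted the three input conjectures --- so I do not expect a serious obstacle here, since the conjectures do the analytic and geometric heavy lifting.
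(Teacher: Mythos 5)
Your proposal is correct and is essentially the same argument the paper has in mind: the paper's proof is just the remark that the ``simple calculation'' of \cite[Thm.~2.19]{CK} for $d=4$ generalizes, and that calculation is exactly your manipulation --- feed Conjectures \ref{conj for toric 4k}, \ref{sign unique}, \ref{specconj} into the vertex expansion to get $M_{d-2}(q)^{\ell}=\sum_{k}(\ell)_k F_k(q)$ for all integers $\ell$, pass to a formal variable via polynomiality in $\ell$ (legitimate since $F_k(q)=O(q^k)$ and $(M_{d-2}(q)-1)^k=O(q^k)$), and compare in the falling-factorial basis to get $F_k=(M_{d-2}(q)-1)^k/k!$. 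Your write-up simply makes explicit the details that the paper delegates to the citation, including the (correct) reading that the orientation in the three conjectures is the same one.
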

\begin{proof}
This is a simple calculation carried out for $d=4$ in \cite[Thm. 2.19]{CK}.
\end{proof}

We end this paper by assigning a combinatorially defined weight $\omega_\pi^c \in \mathbb{Q}_{>0}$ to any $n$-partition $\pi$ for any $n \geqslant 1$. We prove an analogue of Theorem \ref{4k-1 partition counting} with $\omega_\pi$ replaced by $\omega_\pi^c$ (and without assuming any conjecture). When $n = 4k-1 \geqslant 3$, we conjecture $\omega_\pi = \omega_\pi^c$ and provide evidence in many cases.

The following notions were introduced for $n = 2$ in \cite[Def.~4.5]{CK}.
\begin{defi} \label{binary rep}
Let $n \geqslant 1$. Let $\xi=\{\xi_{i_1\ldots i_n}\}_{i_1,\ldots, i_n \geqslant 1}$ be an $n$-partition. We define its binary representation as the sequence of integers 
$\{\xi(i_1,\ldots,i_{n+1})\}_{i_1\ldots,i_{n+1} \geqslant 1}$ given by
$$
\xi(i_1,\ldots,i_n,i_{n+1})= \left\{\begin{array}{cc} 1 & \,\textrm{ if\ } i_{n+1} \leqslant \xi_{i_1\ldots i_n} \\ 0 & \textrm{otherwise} \end{array}\right..
$$
\end{defi}
\begin{defi}\label{combinatoric wpi}
Let $n \geqslant 1$ and $\pi=\{\pi_{i_1\ldots i_{n+1}}\}_{i_1,\ldots, i_{n+1} \geqslant 1}$ be an $(n+1)$-partition. Consider all possible sequences of integers $\{m_\xi\}_\xi$, where the index $\xi$ runs over all (non-empty) $n$-partitions and $m_\xi \in \mathbb{Z}_{\geqslant 0}$. Define the following collection
$$
\mathcal{C}_\pi:=\Bigg\{ \{m_\xi\}_\xi \ \Bigg| \ \pi_{i_1\ldots i_{n+1}}= \sum_\xi m_\xi \cdot \xi(i_1,\ldots, i_{n+1}) \ \textrm{for all} \ i_1,\ldots,i_{n+1}\geqslant1 \Bigg\}.
$$
Using $\mathcal{C}_\pi$, we define
$$
\omega^c_\pi:=\sum_{\{m_\xi\}_\xi \in \mathcal{C}_\pi} \prod_\xi \frac{1}{(m_\xi)!}.
$$
\end{defi}

\begin{rmk}
For each $\{m_\xi\}_\xi\in\mathcal{C}_\pi$, we have 
\begin{equation}|\pi|=\sum_{\xi}m_{\xi}\cdot |\xi|. \nonumber \end{equation}
Hence, $m_{\xi}=0$ if $|\xi|$ is large. So the collection $\mathcal{C}_{\pi}$ is a finite set, and for each $\{m_\xi\}_\xi\in\mathcal{C}_\pi$, 
there are only finitely many nonzero $m_{\xi}$.
\end{rmk}
The combinatorial weight defined above gives the following generating series.
\begin{prop}\label{comb proof}
For all $n \geqslant 1$, we have
\begin{equation}
\sum_{n\textrm{-}\mathrm{partitions}\, \pi} \omega^c_\pi\,t^{\pi_{1\cdots1}} q^{|\pi|} = e^{t(M_{n-1}(q)-1)}, \nonumber 
\end{equation}
where $M_0(q) := \frac{1}{1-q}$.
\end{prop}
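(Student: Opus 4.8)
The plan is to prove the identity by reorganizing the left-hand side: instead of summing over $n$-partitions $\pi$ and then over the set $\mathcal{C}_\pi$ hidden inside $\omega^c_\pi$, I will sum directly over the decomposition data $\{m_\xi\}_\xi$, exploiting a bijection between such data and pairs $(\pi,\{m_\xi\}_\xi\in\mathcal{C}_\pi)$.

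\textbf{Step 1 (dictionary).} First I would record that an $n$-partition $\pi=\{\pi_{i_1\ldots i_n}\}$ is the same datum as a nonincreasing (with respect to the componentwise order), finitely supported function $\mathbb{Z}_{\geqslant1}^n\to\mathbb{Z}_{\geqslant0}$, and that for a nonempty $(n-1)$-partition $\xi$ the binary representation $\{\xi(i_1,\ldots,i_n)\}$ of Definition \ref{binary rep} is precisely such a function taking only the values $0,1$. Two elementary facts will be used: (a) $\xi(1,\ldots,1)=1$ for every nonempty $\xi$ (since $1\leqslant\xi_{1\ldots1}$); and (b) $\sum_{i_1,\ldots,i_n\geqslant1}\xi(i_1,\ldots,i_n)=\sum_{i_1,\ldots,i_{n-1}\geqslant1}\xi_{i_1\ldots i_{n-1}}=|\xi|$.

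\textbf{Step 2 (bijection).} Next I would observe that a family $\{m_\xi\}_\xi$ of nonnegative integers, indexed by nonempty $(n-1)$-partitions and with all but finitely many $m_\xi=0$, defines via $\pi_{i_1\ldots i_n}:=\sum_\xi m_\xi\,\xi(i_1,\ldots,i_n)$ a well-defined $n$-partition $\pi$ (a nonnegative-integer combination of nonincreasing finitely supported functions is again nonincreasing and finitely supported), and that $\{m_\xi\}_\xi\in\mathcal{C}_\pi$ for this $\pi$. Conversely every pair $(\pi,\{m_\xi\}_\xi)$ with $\{m_\xi\}_\xi\in\mathcal{C}_\pi$ arises in exactly this way, and only once. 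Under this correspondence, facts (a) and (b) give $\pi_{1\cdots1}=\sum_\xi m_\xi$ and $|\pi|=\sum_\xi m_\xi|\xi|$.

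\textbf{Step 3 (factor and conclude).} Expanding $\omega^c_\pi$ by Definition \ref{combinatoric wpi} and interchanging the (formal) sums via Step 2, the left-hand side becomes
\[
\sum_{\{m_\xi\}_\xi}\ \prod_\xi\frac{1}{m_\xi!}\,t^{\sum_\xi m_\xi}\,q^{\sum_\xi m_\xi|\xi|}
=\prod_{\xi}\Bigl(\sum_{m\geqslant0}\frac{(tq^{|\xi|})^m}{m!}\Bigr)
=\prod_{\xi}e^{tq^{|\xi|}},
\]
the product being over all nonempty $(n-1)$-partitions $\xi$. Since there are exactly $P_{n-1}(j)$ such $\xi$ of each size $j\geqslant1$ (with the convention $P_0(j)=1$ when $n=1$), this equals $\exp\bigl(t\sum_{j\geqslant1}P_{n-1}(j)q^j\bigr)=e^{t(M_{n-1}(q)-1)}$, because $M_{n-1}(q)=\sum_{j\geqslant0}P_{n-1}(j)q^j$ and $P_{n-1}(0)=1$ (and $M_0(q)=1/(1-q)$). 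Setting $t=1$ yields the stated specialization.

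\textbf{Main obstacle.} There is no genuinely hard step here; the only thing needing care is that every manipulation is purely formal in $\mathbb{Q}[t][\![q]\!]$. For each fixed $N$, only partitions $\pi$ with $|\pi|=N$ (equivalently, families with $\sum_\xi m_\xi|\xi|=N$) contribute to the coefficient of $q^N$, and each such contribution is a finite sum, so the reindexing in Step 2, the interchange of sums, and the expansion of the infinite product in Step 3 are all legitimate. I would also note explicitly that the empty $n$-partition (for which $\mathcal{C}_\pi$ consists of the single all-zero family, so $\omega^c_\pi=1$, the empty product) corresponds to $\{m_\xi\}_\xi\equiv0$ and supplies the constant term $1$ on both sides; accordingly the sum over ``$n$-partitions'' is understood to include the empty one.
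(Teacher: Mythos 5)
Your proof is correct and is essentially the paper's own approach: the paper merely cites \cite[Prop.~2.30]{CK} (the case $n=3$) and states that the argument ``generalizes immediately,'' and your reindexing of the sum over pairs $(\pi,\{m_\xi\}_\xi\in\mathcal{C}_\pi)$ as a sum over families $\{m_\xi\}_\xi$, followed by factorization into $\prod_\xi e^{tq^{|\xi|}}$, is exactly that generalization spelled out. The only point worth noting is the $n=1$ case, where one must interpret nonempty $0$-partitions and their binary representations as you do (a single integer $k\geqslant 1$ with indicator function of $\{1,\ldots,k\}$), consistent with the paper's convention $P_0(j)=1$, $M_0(q)=\frac{1}{1-q}$.
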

\begin{proof}
The proof of \cite[Prop. 2.30]{CK}, where $n=3$, generalizes immediately.
\end{proof}

\begin{rmk}
In fact, Definitions \ref{binary rep}, \ref{combinatoric wpi}, and Proposition \ref{comb proof} for $n=3$ in \cite{CK} initiated the present project. We realized that these results all hold for general $n$ and our goal was to give an interpretation in terms of zero-dimensional counts on Hilbert schemes of $\C^{n+1}$ when $n+1 \geqslant 4$ and $n+1 \equiv 0\ \mathrm{mod}\, 4$. See \cite[Sect.~1.4, Remark~4.12]{CK}.
\end{rmk}

We end with the following conjecture (see \cite{CK} for the case $d=4$).
\begin{conj}\label{compare wpi}
Let $d \geqslant 4$ such that $d \equiv 0\ \mathrm{mod}\,4$. Then $\omega_\pi=\omega^c_\pi$ for any $(d-1)$-partition $\pi$.
\end{conj}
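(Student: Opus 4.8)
We note first that $\omega_\pi$ is only defined granting Conjectures \ref{conj for toric 4k}, \ref{sign unique} and \ref{specconj}, which we therefore assume throughout. The plan is to reduce the statement to an \emph{exponential (cluster) structure} for the assignment $\pi\mapsto\omega_\pi$, and then to prove that structure directly from the vertex. For a $(d-2)$-partition $\xi$ write $B_\xi:=\{\xi(i_1,\ldots,i_{d-1})\}$ for its binary representation (Definition \ref{binary rep}); this is a $(d-1)$-partition with all entries in $\{0,1\}$ and corner entry $1$, and conversely every $(d-1)$-partition with corner entry $1$ arises this way. Moreover these are precisely the $(d-1)$-partitions that are \emph{indecomposable} for box-wise addition: if $\pi=\pi'+\pi''$ with $\pi',\pi''$ nonzero then the corner entry of $\pi$ is $\geqslant 2$, while a $(d-1)$-partition with corner entry $0$ vanishes. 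Working in $\mathbb{Q}[\![\{z_{i_1\ldots i_{d-1}}\}]\!]$ with $z^\pi:=\prod z_{i_1\ldots i_{d-1}}^{\pi_{i_1\ldots i_{d-1}}}$, Definition \ref{combinatoric wpi} is then equivalent to
\begin{equation*}
\sum_{(d-1)\text{-partitions }\pi}\omega^c_\pi\, z^{\pi}\;=\;\exp\!\Big(\sum_{(d-2)\text{-partitions }\xi} z^{B_\xi}\Big),
\end{equation*}
because the coefficient of $z^\pi$ on the right equals $\tfrac{1}{h!}\,\#\{(\xi^{(1)},\ldots,\xi^{(h)}):\sum_j B_{\xi^{(j)}}=\pi\}$ with $h=\pi_{1\cdots1}$ (only tuples of length exactly $h$ occur, since each nonempty $B_\xi$ contributes $1$ to the corner), and this number is $\omega^c_\pi$ after regrouping equal summands. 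Hence Conjecture \ref{compare wpi} is equivalent to: (a) $\omega_{B_\xi}=1$ for every indecomposable $(d-1)$-partition, which is Proposition \ref{thm for equiv sm div}/\ref{weight wpi check}; and (b) $\sum_\pi\omega_\pi z^\pi=\exp(\sum_\xi z^{B_\xi})$, i.e.\ the generating function of the $\omega_\pi$ is the exponential of its ``connected part''.

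To prove (b) the plan is to argue by induction on $|\pi|$, extracting a recursion from $\mathcal{C}_\pi$ and matching it on the geometric side. Fix an extremal box $b$ of $\pi$ lying on the top level in the $x_d$-direction, say at $(b_1,\ldots,b_{d-1},h)$, and let $\pi^\flat$ be $\pi$ with $b$ removed. Combinatorially, the decompositions in $\mathcal{C}_\pi$ are obtained from those in $\mathcal{C}_{\pi^\flat}$ by adjoining $b$ (at its top level) to exactly one summand $B_{\xi^{(j)}}$; tracking the multiplicities $m_\xi$ produces an explicit linear recursion for $\omega^c_\pi$ in terms of $\{\omega^c_{\pi'}:|\pi'|<|\pi|\}$. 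Geometrically, removing $b$ changes $\mathsf V_\pi$ to $\mathsf V_{\pi^\flat}$ by an explicitly computable term, exactly as in the box-peeling of the proof of Theorem \ref{thm for toric 2k-1}, and one must show that the induced change in $\sqrt{(-1)^{|\pi|}e_T(-\mathsf V_\pi)}\cdot L_\pi(0,\ldots,-\ell)$, after the specialization $\lambda_1+\cdots+\lambda_{d-1}=\lambda_d=0$, obeys the same recursion. The mechanism one expects to make this work is that under this specialization the \emph{odd-dimensional} theory on $D=\mathbb{C}^{d-1}$ governs the transverse data: the contribution of the columns of $\pi$ away from the corner column should organize itself precisely as in $Z_{\mathbb{C}^{d-1}}(q)=M_{d-2}(-q)$ (Theorem \ref{thm for toric 2k-1}, via Proposition \ref{compare ext}), which is what forces connected contributions to be $1$, while the corner column together with the insertion $L$ produces the factor $\prod_{m=1}^{h}(\ell-(m-1))$ of Conjecture \ref{specconj}.

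The hard part is entirely Step (b). Since no virtual class exists for $d>4$, there is no geometric degeneration formula to invoke, so the ``layering'' of the weight must be proved by direct vertex computation from \eqref{defV} and the definition of the half Euler class. Two points require care. First, the specialization $\lambda_1+\cdots+\lambda_{d-1}=\lambda_d=0$ is singular for the individual equivariant Euler factors, so one must work with the product $L_\pi\cdot\mathsf w_\pi$ as a whole and prove the limit exists with the shape predicted by Conjecture \ref{specconj}; in effect, establishing Conjecture \ref{specconj} is a prerequisite rather than an input. Second, one must verify that the sign/orientation singled out by Conjecture \ref{sign unique} (equivalently, the one making $\omega_\pi>0$) is compatible with the recursion, i.e.\ that peeling a box is orientation-preserving in the appropriate sense; this is the analogue of the delicate sign bookkeeping in \cite{MNOP} and \cite{CK}, now complicated by the square root. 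A realistic fallback, which at least extends the verified range, is to run this recursion in tandem with the explicit Maple vertex computations already used for $d=8,12$.
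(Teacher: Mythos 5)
Your proposal does not prove the statement, and it is worth being clear that the paper does not either: Conjecture \ref{compare wpi} is left open there, supported only by the special cases of Proposition \ref{prop compare wpi} (the case $\pi_{1\cdots1}=1$, which follows from the proof of Proposition \ref{thm for equiv sm div}, plus finitely many partitions checked by Maple). Your first paragraph is correct and useful as a reformulation: identifying the binary layers $B_\xi$ with the $(d-1)$-partitions of corner entry $1$, and rewriting Definition \ref{combinatoric wpi} as the multivariable exponential identity, is exactly the combinatorics underlying Proposition \ref{comb proof}; and your step (a), $\omega_{B_\xi}=1$, coincides with the paper's first bullet of Proposition \ref{prop compare wpi}. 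But from that point on you have a program, not an argument.

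The genuine gap is your step (b), which is the entire content of the conjecture. Two concrete problems. First, even granting Conjectures \ref{conj for toric 4k}, \ref{sign unique} and \ref{specconj}, what follows is only the two-variable identity of Theorem \ref{4k-1 partition counting} in $(t,q)$; that identity sums over all $(d-1)$-partitions with fixed size and corner height and therefore cannot distinguish $\omega_\pi$ from $\omega_\pi^c$ partition by partition, whereas your refined identity in the variables $z^\pi$ is strictly stronger and is implied by nothing assumed or proved. Second, the box-peeling recursion is asserted rather than derived on both sides: combinatorially, passing from $\mathcal{C}_{\pi^\flat}$ to $\mathcal{C}_\pi$ is not simply ``adjoin $b$ to exactly one summand'' (the modified layer must remain the binary representation of a $(d-2)$-partition, and the factors $1/(m_\xi)!$ must be tracked when distinct decompositions of $\pi$ restrict to the same decomposition of $\pi^\flat$), and geometrically you give no computation from \eqref{defV} showing that $L_\pi\,\mathsf{w}_\pi$, after the singular specialization $\lambda_1+\cdots+\lambda_{d-1}=\lambda_d=0$, obeys any such recursion --- you yourself flag the existence and shape of that limit (Conjecture \ref{specconj}) and the orientation compatibility under peeling (Conjecture \ref{sign unique}) as unresolved prerequisites. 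The proposed fallback of Maple checks merely reproduces what the paper already does. In sum, what you have established is the correct exponential reformulation together with the already-known case $\pi_{1\cdots1}=1$; the main step remains open, consistent with the statement's status in the paper as a conjecture.
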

\begin{prop}\label{prop compare wpi}
Conjecture \ref{compare wpi} is true in the following cases:
\begin{itemize}
\item For any $(d-1)$-partition $\pi$ satisfying $\pi_{1 \cdots 1} = 1$.
\item For $3$-partitions $\pi$ of size $|\pi| \leqslant 6$ (see \cite[Prop.~4.14]{CK}).
\item For $7$-partitions $\pi$ of size $|\pi| \leqslant 6$.
\item For $11$-partitions $\pi$ of size $|\pi| \leqslant 4$.
\end{itemize}
In \cite[App.~A]{CK}, we verified Conjecture \ref{compare wpi}, up to sign, for a certain list 3-partitions of size 7--15. Finally, we verified Conjecture \ref{compare wpi}, up to sign, for the $7$-partitions of size $9,10,14$ in Remark \ref{indivpart} below.
\end{prop}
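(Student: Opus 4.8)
The plan is to treat the statement in two parts: the infinite family $\pi_{1\cdots1}=1$ by a direct argument, and the finitely many remaining partitions by the vertex formalism of Section~\ref{sect vertex formu} together with Definition~\ref{combinatoric wpi}, carried out by machine.

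For $\pi_{1\cdots1}=1$ the geometric side is essentially in hand: since such a $Z_\pi$ lies scheme-theoretically in the smooth divisor $\{x_d=0\}$, the $\ell=1$ case of Proposition~\ref{weight wpi check} (equivalently Proposition~\ref{thm for equiv sm div}, via Theorem~\ref{thm for toric 2k-1} on $D\cong\mathbb C^{d-1}$) gives $\omega_\pi=1$. It remains to show $\omega^c_\pi=1$. A $(d-1)$-partition with $\pi_{1\cdots1}=1$ has all entries in $\{0,1\}$; being downward closed, it is, in the sense of Definition~\ref{binary rep}, exactly the binary representation of the unique $(d-2)$-partition $\xi_0$ given by $\xi_{0,\,i_1\ldots i_{d-2}}=\max\{j:\pi_{i_1\ldots i_{d-2}\,j}=1\}$. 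Since every nonempty $n$-partition has binary representation equal to $1$ at the index $(1,\ldots,1)$, any $\{m_\xi\}_\xi\in\mathcal C_\pi$ satisfies $\sum_\xi m_\xi=\pi_{1\cdots1}=1$; hence $\mathcal C_\pi$ consists of the single sequence with $m_{\xi_0}=1$ and all other $m_\xi=0$, so $\omega^c_\pi=1/1!=1=\omega_\pi$.

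For the remaining cases I would run the following, already implemented in Maple. Enumerate all $(d-1)$-partitions $\pi$ of the relevant size for $d=4,8,12$, and the individual $7$-partitions of sizes $9,10,14$ of Remark~\ref{indivpart}, reducing by the symmetry permuting the axes $x_1,\ldots,x_{d-1}$ (which preserves both $\mathsf{w}_\pi$ after the specialization $\lambda_1+\cdots+\lambda_{d-1}=\lambda_d=0$ and $\omega^c_\pi$). For each $\pi$: form the vertex $\mathsf V_\pi$ of \eqref{defV}, extract $\mathsf{w}_\pi=\sqrt{(-1)^{|\pi|}e_T(-\mathsf V_\pi)}$ by the half-Euler-class recipe, fix its sign so that $\omega_\pi>0$ (Conjecture~\ref{sign unique}, verified in these ranges by Proposition~\ref{verify sign unique}), specialize, and read off $\omega_\pi$ as the coefficient appearing in Conjecture~\ref{specconj}; independently compute $\omega^c_\pi$ by enumerating the finite set $\mathcal C_\pi$ of Definition~\ref{combinatoric wpi}; and compare. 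For the full ranges $|\pi|\leqslant 6$ ($d=4,8$) and $|\pi|\leqslant 4$ ($d=12$) this yields $\omega_\pi=\omega^c_\pi$; for the individual $7$-partitions of sizes $9,10,14$ it yields $|\omega_\pi|=|\omega^c_\pi|$, the sign of $\mathsf{w}_\pi$ being undetermined from a single partition without the generating-function consistency, whence only the \emph{up to sign} statement.

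The only real obstacle is computational: both the number of $(d-1)$-partitions and the size of the rational-function manipulations producing $\mathsf{w}_\pi$ grow rapidly with $d$ and $|\pi|$, and enumerating $\mathcal C_\pi$ is a subset-sum-type search over $(d-2)$-partitions. The axis symmetry, together with the fact that after specialization each contribution is a polynomial in $\ell$ of degree $\pi_{1\cdots1}$ (so it suffices to match finitely many coefficients), is what brings the stated ranges within reach. There is no conceptual difficulty in the case $\pi_{1\cdots1}=1$, nor do I expect one in extending the finite verifications, only a cost in machine time.
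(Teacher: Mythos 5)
Your proposal is correct and follows essentially the same route as the paper: the case $\pi_{1\cdots1}=1$ is handled by combining $\omega_\pi=1$ from the $\ell=1$ argument of Proposition \ref{thm for equiv sm div} with the (easy) combinatorial identity $\omega^c_\pi=1$, and all remaining cases are Maple computations with the vertex formalism of Section \ref{sect vertex formu}, with the sign caveat for $|\pi|>6$ exactly as in Remark \ref{indivpart}. Your explicit verification that $\mathcal{C}_\pi$ is a singleton when $\pi_{1\cdots1}=1$ spells out a step the paper leaves implicit, but it is the same argument.
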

\begin{proof}
For any partition satisfying $\pi_{1 \cdots 1}=1$, we have $\omega_\pi^c = 1$ and $\omega_\pi = 1$. The latter follows from the proof of Proposition \ref{thm for equiv sm div}. The other cases follow from our implementation into Maple of the topological vertex for $\mathbb{C}^4$, $ \mathbb{C}^8$, $\mathbb{C}^{12}$ and computer calculations.
\end{proof}

\begin{rmk} \label{indivpart}
For the following 7-partitions, we check that $|\omega_\pi|$ and $\omega_\pi^c$ agree. These checks are only up to sign, because we have not verified Conjecture \ref{sign unique} for $|\pi| > 6$.
\begin{itemize}
\item $Z_\pi = 1+t_1+t_2+t_3+t_4+t_5+t_6+t_7+t_8$ then $$|\omega_\pi| = \omega_\pi^c = 64.$$ Here $|\omega_\pi| = 64$ follows from our Maple implementation of the topological vertex for $\mathbb{C}^8$. The equality $\omega_\pi^c=64$ follows from Definition \ref{combinatoric wpi} and the following calculation
$$
\omega_\pi^c = 1 + 7 + \binom{7}{2} + \binom{7}{3} = 64.
$$
Here $1$ corresponds to the ``superposition'' of $\xi_1=1$ and $\xi_2=1+t_1+ \cdots + t_7$; $7$ corresponds to the superposition of $\xi_1=1+t_{i_1}$ and $\xi_2=1+t_{i_2}+t_{i_3}+ t_{i_4} + t_{i_5} + t_{i_6} + t_{i_7}$ where $i_1, \ldots, i_7 \in \{1, \ldots, 7\}$ are mutually distinct; $\binom{7}{2}$ corresponds to the ``superposition'' of $\xi_1=1+t_{i_1}+t_{i_2}$ and $\xi_2=1+t_{i_3}+t_{i_4}+ t_{i_5} +t_{i_6}+t_{i_7}$ where $i_1, \ldots, i_7 \in \{1, \ldots, 7\}$ are mutually distinct; and $\binom{7}{3}$ corresponds to the ``superposition'' of $\xi_1=1+t_{i_1}+t_{i_2}+t_{i_3}$ and $\xi_2=1+t_{i_4}+t_{i_5}+ t_{i_6}+t_{i_7} $ where $i_1, \ldots, i_7 \in \{1, \ldots, 7\}$ are mutually distinct.
\item $Z_\pi =1+t_1+t_2+t_3+t_4+t_5+t_6+t_7+t_8+t_8^2$ then $$|\omega_\pi| = \frac{729}{2}= \frac{1}{2}+7+\binom{7}{2}+\binom{7}{3}+\binom{7}{2}+7 \cdot \binom{6}{2} + \frac{1}{2} \cdot 7 \cdot \binom{6}{3} + \frac{1}{2} \cdot \binom{7}{2} \cdot \binom{5}{2} = \omega_\pi^c.$$
\item $Z_\pi =1+t_1+t_2+t_3+t_1t_2+t_1t_3+t_2t_3+t_1t_2t_3+t_4+t_5+t_6+t_7+t_8+t_8^2$ then $$|\omega_\pi| = \frac{81}{2} = \frac{1}{2} + 4 + \binom{4}{2} + \binom{4}{2} + 4 + 4 \cdot 3 + 1 + 4 + \frac{1}{2} \cdot \binom{4}{2} = \omega_\pi^c.$$
\end{itemize}
\end{rmk}

\end{document}